\newcommand{\nc}{\newcommand}
\newcommand{\delete}[1]{}
\long\def\ignore#1{}
\nc{\mlabel}[1]{\label{#1}}  
\nc{\mcite}[1]{\cite{#1}}  
\nc{\mref}[1]{\ref{#1}}  
\nc{\mbibitem}[1]{\bibitem{#1}} 
\nc{\mlabel}[1]{\label{#1}  
{\hfill \hspace{1cm}{\small\tt{{\ }\hfill(#1)}}}}
\nc{\mcite}[1]{\cite{#1}{\small{\tt{{\ }(#1)}}}}  
\nc{\mref}[1]{\ref{#1}{{\tt{{\ }(#1)}}}}  
\nc{\mbibitem}[1]{\bibitem[\bf #1]{#1}} 
\newtheorem{theorem}{Theorem}[section]
\newtheorem{prop}[theorem]{Proposition}
\theoremstyle{definition}
\newtheorem{defn}[theorem]{Definition}
\newtheorem{remark}[theorem]{Remark}
\newtheorem{exam}[theorem]{Example}
\newtheorem{prop-def}{Proposition-Definition}[section]
\newcommand\cal[1]{\mathcal{\MakeUppercase{#1}}}
\newcommand\alphlist{a,b,c,d,e,f,g,h,i,j,k,l,m,n,o,p,q,r,s,t,u,v,w,x,y,z}
\newcommand\Alphlist{A,B,C,D,E,F,G,H,I,J,K,L,M,N,O,P,Q,R,S,T,U,V,W,X,Y,Z}
\newcommand\getcmds[3]{\expandafter\newcommand\csname #2#1\endcsname{#3{#1}}}
\alphlist\do{\expandafter\getcmds\expandafter{\x}{cal}{\cal}}      
\alphlist\do{\expandafter\getcmds\expandafter{\x}{frak}{\mathfrak}}
\Alphlist\do{\expandafter\getcmds\expandafter{\x}{frak}{\mathfrak}}
\nc{\bfk}{{\bf k}}
\font\cyr=wncyr10
\newfont{\scyr}{wncyr10 scaled 550}
\nc{\sha}{\mbox{\cyr X}}
\nc{\ssha}{\mbox{\bf \scyr X}}
\nc{\Id}{\mathrm{Id}}
\nc{\lbar}[1]{\overline{#1}}
\nc{\ot}{\otimes}
\nc{\dep}{\mathrm{dep}}
\nc{\tred}[1]{\textcolor{red}{#1}} \nc{\tgreen}[1]{\textcolor{green}{#1}}
\nc{\tblue}[1]{\textcolor{blue}{#1}} \nc{\tpurple}[1]{\textcolor{purple}{#1}}
\nc{\li}[1]{\tpurple{\underline{Li: }#1 }}
\nc{\liadd}[1]{\tpurple{#1}}
\nc{\xing}[1]{\tblue{\underline{Xing: }#1 }}
\nc{\yuan}[1]{\tred{\underline{Yuan: }#1 }}
\nc{\markus}[1]{\tred{\underline{Markus: } #1}}
\nc{\dominique}[1]{\tpurple{\underline{Dominique: }#1 }}
\newcommand\scopeclip[1]{\begin{scope}
\clip(-1.1,-0.5)rectangle(1.1,1);#1\end{scope}}
\newcommand\XX[2][]{%
\tikz[line width=0.15ex,x=0.5cm,y=0.5cm,baseline,inner sep=1.5pt,
every node/.style={font=\scriptsize},#1]{
\scopeclip{\draw (135:1.5)--(0,0)--(45:1.5) (0,-0.5)--(0,0);}#2}}
\newcommand\xx[3]{%
\scopeclip{\draw(#1/10,#2/10)--+(#3*45:2.5);}}
\newcommand\xxl[2]{\xx{#1}{#2}3}
\newcommand\xxr[2]{\xx{#1}{#2}1}
\newcommand\xxlr[2]{\xxl{#1}{#2}\xxr{#1}{#2}}
\newcommand\xxh[6]{
\draw(#1/10,#2/10)+(0.5*#3*45+0.5*#4*45:#6) node[above] {$#5$};}
\newcommand\xxhu[4][0.15]{\xxh{#2}{#3}13{#4}{#1}}
\newcommand\stree[1]{\XX{\xxhu[0.25]00{#1}}}
\newcommand\ssstree[2]{\XX[scale=1.3,baseline=0.5ex]{\xx00{1.6}\xx00{2.4}
\xxh00{2.4}3{#1\ \ }{0.8}\xxh001{1.6}{\ \ \,#2}{0.8}
\draw (0,0.9) node {$\ldots$};}}
\nc{\dnx}{\Delta_n A} \nc{\dx}{\Delta A} \nc{\dgp}{{\rm deg_{P}}}
\nc{\dgt}{{\rm deg_{T}}} \nc{\dg}{{\rm deg}} \nc{\ida}{ID($A$)} \nc{\tu}{\tilde{u}} \nc{\tv}{\tilde{v}}
\nc{\nr}{\calr_n} \nc{\nz}{\calz_n} \nc{\fun}{\cala_{n,d}}
 \nc{\fbase}{\calb} \nc{\LF}{\mathrm{RF}} \nc{\FFA}{\mathrm{LF}} \nc{\irr}{\mathrm{Irr}}
 \nc{\result}{\bfk\mathrm{Irr}(S_n)}  \nc{\I}{I_{\mathrm{ID},n}^0}
 \nc{\nrs}{\calr_n^\star} \nc{\ii}{\mathrm{I}} \nc{\iii}{\mathrm{II}}
\nc{\intl}{{\rm int}}\nc{\ws}[1]{{#1}}\nc{\deleted}[1]{\delete{#1}}\nc{\plas}{placements\xspace}
\nc{\bim}[1]{#1}  \nc{\shaop}{\sha_{\Omega}^{+}}  \nc{\shao}{\sha_{\Omega}}
\nc{\bbim}[2]{#1 #2} \nc{\bbbim}[2]{#1,\, #2} \nc{\RBF}{{\rm RBF}}
\nc{\frb}{F_{\RB}} \nc{\shaf}{\ssha_{\tiny{\Omega}}} \nc{\sham}{\diamond_{\tiny{\Omega}}}
\nc{\lf}{\lfloor} \nc{\rf}{\rfloor} \nc{\shan}{\ssha_{\lambda}}
\nc{\rlex}{{\rm {lex}}} \nc{\bb}{\Box} \nc{\ra}{\rightarrow}
\nc{\e}{{\rm {e}}}
\nc{\DDF}{\mathrm{DD}(X,\,\Omega)}\nc{\DTF}{\mathrm{DT}(X,\,\Omega)}
\nc{\bre}{\mathrm{bre}}
\nc{\dec}{\mathrm{dec}}
\nc{\type}{\mathrm{type}}
\def\Ve#1,#2,#3;{\vee_{#1,\,(#2,\,#3)}}
\def\bigv#1;#2;#3;{\bigvee\nolimits_{#1}^{#2;\,#3}}
\begin{document}

\title[Free (tri)dendriform family algebras]{Free (tri)dendriform family algebras}
\author{Yuanyuan Zhang} \address{School of Mathematics and Statistics,
Lanzhou University, Lanzhou, 730000, P. R. China}
\email{zhangyy2017@lzu.edu.cn}
\author{Xing Gao
}
\address{School of Mathematics and Statistics,
Key Laboratory of Applied Mathematics and Complex Systems,
Lanzhou University, Lanzhou, 730000, P.R. China}
\email{gaoxing@lzu.edu.cn}
\author{Dominique Manchon$^{*}$}
\footnotetext{* Corresponding author.}
\address{Laboratoire de Math\'ematiques Blaise Pascal,
CNRS--Unibersit\'e Clermont-Auvergne,
3 place Vasar\'ely, CS 60026,
F63178 Aubi\`ere, France}
\email{Dominique.Manchon@uca.fr}

\date{\today}

\begin{abstract}
In this paper, we first prove that a Rota-Baxter family algebra indexed by a semigroup induces an ordinary Rota-Baxter algebra structure on the tensor product with the semigroup algebra. We show that the same phenomenon arises for dendriform and tridendriform family algebras. Then we construct free dendriform family algebras in terms of typed decorated planar binary trees. Finally, we generalize typed decorated rooted trees to typed valently decorated Schr\"oder trees and use them to construct free tridendriform family algebras.
\end{abstract}

\subjclass[2010]{
16W99, 
16S10, 
13P10, 
08B20, 
}

\keywords{Rota-Baxter algebra, Rota-Baxter family algebra, (tri)dendriform algebra, (tri)dendriform family algebra, typed decorated planar binary trees, typed valently decorated rooted trees.}

\maketitle

\tableofcontents

\setcounter{section}{0}

\allowdisplaybreaks
\section{Introduction}\mlabel{sec:rota}
 The concept of Rota-Baxter family algebra is a generalization of Rota-Baxter algebras~\cite{Guo12}, which was proposed by Guo. It arises naturally in renormalization of quantum field theory~(\cite[Proposition~9.1]{FBP} and \cite[Theorem 3.7.2]{DK} ). The free objects were constructed in~\cite{ZG}, in which the authors described free commutative Rota-Baxter family algebras, and also described free noncommutative Rota-Baxter family algebras via the method of Gr\"{o}bner-Shirshov bases.\\

Dendriform algebras were introduced by Loday~\cite{Lod93} in 1995 with motivation from algebraic K-theory. They have been studied quite extensively with connections to several areas in mathematics and physics, including operads ~\cite{Loday2}, homology~\cite{Fra2}, arithmetics~\cite{Loday3}, Hopf algebras~\mcite{LoRo98,LoRo04,Ron02} and quantum field theory ~\cite{EFMP,Loday1}, in connection with the theory of renormalization of Connes and Kreimer~\cite{CK,CK1,CK2}.\\

Some years later after~\mcite{Lod93}, Loday and Ronco introduced the concept of tridendriform algebra in the study of polytopes and Koszul duality~\mcite{LoRo04}. The construction of free (tri)dendriform algebras can be referred to~\cite{Lod93,LoRo98,LoRo04}. The free objects play a crucial role in the
study of any algebraic structures, such as the construction of free differential algebras in terms of differential monomials,
the construction of free Rota-Baxter algebras~\cite{GK2} which is more involved, the free differential Rota-Baxter algebras~\mcite{GK3}
composing the construction of free differential algebras followed by that of the free Rota-Baxter algebras, and the free integro-differential algebras~\mcite{GGR, GGZ} for analyzing the underlying algebraic structures
of boundary problems for linear ordinary differential equations. In~\cite{ZG}, the authors proposed the concepts of (tri)dendriform family algebras and they obtained that Rota-Baxter family algebras induce (tri)dendriform family algebras. Besides, they construct free commutative (tri)dendriform family algebras.\\

It is natural to ask how to use reduced planar rooted trees (also known as Schr\"oder trees) to construct free (tri)dendriform family algebras. Here we exhibit a way to construct free (tri)dendriform family algebras via typed decorated Schr\"oder trees, whose vertices are decorated by elements of a set $X$ and edges are decorated by elements of a semigroup $\Omega.$ Typed decorated trees are used by Bruned, Hairer and Zambotti in~\mcite{BHZ} to give a systematic description of a canonical renormalisation procedure of stochastic PDEs. They also appear~\cite{RM} in a context of low dimension topology and also appear in~\cite{BLL} for the description of combinatorial species. The concept of typed decorated planar binary trees enables us to construct free dendriform family algebras, and the concept of typed valently decorated Schr\"oder trees will be used to construct free tridendriform family algebras.\\

The layout of the paper is as follows. In Section~\mref{sec:rota}, we prove that any Rota-Baxter family algebra $R$ on a base ring $\bfk$ indexed by a semigroup $\Omega$ induces an ordinary Rota-Baxter algebra structure on $R\otimes \bfk\Omega$ (Theorem~\mref{thm:pp1}). The semigroup $\Omega$ at hand is not necessarily commutative. Then we prove that Rota-Baxter family algebras can induce (tri)dendriform family algebras (Proposition~\mref{prop:RBTD3}). Finally, we give the relationship between (tri)dendriform family algebras and (tri)dendriform algebras (Theorem~\mref{thm:prsut}). Section~\mref{sec:dd} is devoted to typed decorated planar binary trees. We derive a useful recursive expression for typed decorated planar binary trees endowed with the operations $\{\prec_\omega,\succ_\omega\mid\omega\in\Omega\}$ introduced in the third section (Definition \ref{def:freeDF}).
We first construct dendriform family algebras (Proposition~\mref{prop:dend}) in terms of typed decorated planar binary trees, then we prove the freeness of this dendriform family algebra (Theorem~\mref{thm:free1}). In Section~\mref{sec:tri}, closely related to Section~\mref{sec:dd}, we first introduce the concept of typed valently decorated Schr\"oder trees. Secondly,
we construct tridendriform family algebras (Proposition~\mref{prop:trid}) in terms of typed valently decorated Schr\"oder trees, and finally prove the freeness of this tridendriform family algebra (Theorem~\mref{thm:free2}).\\

The related notions of \textsl{matching Rota-Baxter algebra}, \textsl{matching dendriform algebra} and \textsl{matching pre-Lie algebra} are addressed in the recent paper \cite{GGZ2019}. The main difference is that the parameter at hand runs into a set $\Omega$ without any semigroup structure: see Remark 2. 2 (b) therein.\\

\noindent {\bf Notation:} In this paper, we fix a ring $\bfk$ and assume that an algebra is a \bfk-algebra. Denote by $\Omega$ a semigroup, unless otherwise specified.

\section {Rota-Baxter family algebras and (tri)dendriform family algebras}
\mlabel{sec:RBF}
In this section, we first recall the concepts of Rota-Baxter family algebras and (tri)dendriform family algebras. We then give a method to
induce Rota-Baxter (resp.~(tri)dendriform) algebras from Rota-Baxter (resp.~(tri)dendriform) family algebras.

\subsection{Rota-Baxter family algebras}
 Rota-Baxter algebras (first called Baxter algebras) are originated in the work of the American mathematician Glen E. Baxter~\cite{Bax}
in the realm of probability theory.
\begin{defn} Let $\lambda$ be a given element in $\bfk$. A {\bf Rota-Baxter algebra of weight $\lambda$} is a pair $(R, P)$ consisting of an algebra $R$
with a linear operator $P: R \rightarrow R$ that satisfies the Rota-Baxter equation
$$P(a)P(b)=P\bigl(P(a)b  + aP(b)+\lambda ab \bigr),\, \text{ for }\, a, b \in R.  $$
Then $P$ is called a {\bf Rota-Baxter operator of weight $\lambda$}. If further $R$ is commutative, then $(R, P)$ is called a commutative Rota-Baxter algebra of weight $\lambda$.
\end{defn}

The following is the concept of Rota-Baxter family algebra proposed by Guo,
which arises naturally in renormalization of quantum field theory~\cite[Proposition~9.1]{FBP}, see also~\cite{DK}.
\begin{defn}~\rm{(\mcite{FBP,Guo09})}
Let $\Omega$ be a semigroup and $\lambda\in \bfk$ be given.
A {\bf Rota-Baxter family} of weight $\lambda$ on an algebra $R$ is a collection of linear operators $\{ P_{\omega} \mid \omega\in \Omega\}$ on $R$ such that
\begin{equation*}
P_{\alpha}(a)P_{\beta}(b)=P_{\alpha\beta}\bigl( P_{\alpha}(a)b  + a P_{\beta}(b) + \lambda ab \bigr),\, \text{ for }\, a, b \in R\,\text{ and }\, \alpha,\, \beta \in \Omega.
\end{equation*}
Then the pair $(R,\, \{ P_{\omega} \mid \omega \in \Omega\})$ is called a {\bf Rota-Baxter family algebra} of weight $\lambda$.
If further $R$ is commutative, then $(R,\, \{ P_{\omega} \mid \omega\in \Omega\})$ is called a commutative Rota-Baxter family algebra
of weight $\lambda$.
\mlabel{def:pp}
\end{defn}

\begin{defn}
Let $(R,\,\{ P_{\omega} \mid \omega\in \Omega \})$ and $(R', \, \{ P'_{\omega} \mid \omega\in \Omega \})$ be
two Rota-Baxter family algebras of weight $\lambda$.
A map $f: R \rightarrow R'$ is called {\bf a Rota-Baxter family algebra morphism} if
$f$ is an algebra homomorphism and $f\circ P_{\omega}=P'_{\omega}\circ f$ for each $\omega\in \Omega$.
\end{defn}

\noindent The following theorem precises the link between Rota-Baxter family algebras and ordinary Rota-Baxter algebras.

\begin{theorem}
Let $(R,\{P_\omega\mid\omega\in\Omega\})$ be a Rota-Baxter family algebra of weight $\lambda$. Then
$(R \ot\bfk\Omega,P)$ is a Rota-Baxter algebra of weight $\lambda$, where $P: R\ot\bfk\Omega\ra R\ot\bfk\Omega, x\ot \omega\mapsto P_\omega(x)\ot\omega.$
\mlabel{thm:pp1}
\end{theorem}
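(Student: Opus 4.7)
The plan is to verify the Rota-Baxter identity for $P$ by a direct computation on simple tensors and then extend linearly. Since the algebra $R\otimes\bfk\Omega$ carries the tensor product algebra structure, multiplication of simple tensors is given by $(x\otimes\alpha)(y\otimes\beta)=xy\otimes\alpha\beta$, and by definition $P(x\otimes\alpha)=P_\alpha(x)\otimes\alpha$. The whole point of the statement is that the semigroup product $\alpha\beta$ appearing in the multiplication is precisely the subscript $\alpha\beta$ that appears on the right-hand side of the Rota-Baxter family axiom, so the two things interlock perfectly.

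First I would compute the left-hand side: for simple tensors $x\otimes\alpha$ and $y\otimes\beta$,
\[
P(x\otimes\alpha)\,P(y\otimes\beta)
=\bigl(P_\alpha(x)\otimes\alpha\bigr)\bigl(P_\beta(y)\otimes\beta\bigr)
=P_\alpha(x)P_\beta(y)\otimes\alpha\beta,
\]
and then apply the family axiom to the first tensor factor to rewrite this as
\[
P_{\alpha\beta}\bigl(P_\alpha(x)y+xP_\beta(y)+\lambda xy\bigr)\otimes\alpha\beta.
\]

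Next I would compute the right-hand side $P\bigl(P(x\otimes\alpha)(y\otimes\beta)+(x\otimes\alpha)P(y\otimes\beta)+\lambda(x\otimes\alpha)(y\otimes\beta)\bigr)$. Each of the three inner terms is a simple tensor whose second factor is $\alpha\beta$, so their sum equals $(P_\alpha(x)y+xP_\beta(y)+\lambda xy)\otimes\alpha\beta$, and applying $P$ produces exactly the same expression as above. This matches the left-hand side, so the Rota-Baxter identity holds on simple tensors, and by $\bfk$-bilinearity it extends to all of $R\otimes\bfk\Omega$.

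There is no real obstacle here; the statement is essentially a tautology once one unfolds the definitions. The only thing to be careful about is ensuring that the tensor product algebra structure on $R\otimes\bfk\Omega$ is the one being used (as opposed to some twisted structure), and noting that associativity of the semigroup $\Omega$ guarantees that $R\otimes\bfk\Omega$ is indeed an associative algebra; commutativity of $\Omega$ is not required, in agreement with the remark following the theorem statement.
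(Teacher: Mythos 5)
Your proposal is correct and follows essentially the same route as the paper: compute $P(x\ot\alpha)P(y\ot\beta)$ on simple tensors, apply the Rota-Baxter family axiom in the first tensor factor, and recognize the result as $P$ applied to $P(x\ot\alpha)(y\ot\beta)+(x\ot\alpha)P(y\ot\beta)+\lambda(x\ot\alpha)(y\ot\beta)$, extending by linearity. Your added remarks about the untwisted tensor product structure and the non-commutativity of $\Omega$ are consistent with the paper and add no divergence in method.
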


\begin{proof}
For $x,y\in R$ and $\alpha,\beta\in \Omega,$ we have
\begin{align*}
P(x\ot\alpha)P(y\ot\beta)=& \ \bigl(P_\alpha(x)\ot\alpha\bigr)(P_\beta(y)\ot \beta)
= \bigl(P_\alpha(x)P_\beta(y)\bigr)\ot\alpha\beta\\
=& \ P_{\alpha\beta}\bigl(P_\alpha(x)y+xP_\beta(y)+\lambda xy\bigr)\ot \alpha\beta\\
=& \ P\Bigl(\bigl(P_\alpha(x)y+xP_\beta(y)+\lambda xy\bigr)\ot\alpha\beta\Bigr)\\
=& \ P\bigl(P_\alpha(x)y\ot\alpha\beta+xP_\beta(y)\ot\alpha\beta+\lambda xy\ot\alpha\beta\bigr)\\
=& \ P\Bigl(\bigl(P_\alpha(x)\ot\alpha\bigr)(y\ot \beta)+(x\ot\alpha)\bigl(P_\beta(y)\ot\beta\bigr)+\lambda(x\ot\alpha)(y\ot\beta)\Bigr)\\
=& \ P\bigl(P(x\ot\alpha)(y\ot\beta)+(x\ot\alpha)P(y\ot\beta)+\lambda(x\ot\alpha)(y\ot\beta)\bigr),
\end{align*}
as required.
\end{proof}

\subsection{Dendriform family algebras and tridendriform family algebras}
The concept of dendriform family algebras was proposed in~\mcite{ZG}, as a generalization of dendriform algebras invented by Loday~\mcite{Lod93} in the study of algebraic $K$-theory.

\begin{defn}\cite{Lod93}
A {\bf dendriform algebra} is a $\bfk$-module $D$ with two binary operations $\prec, \succ$ such that for $ x, y, z\in D$,
\begin{align*}
(x\prec y) \prec z=\ & x \prec (y\prec z+y \succ z),  \\
(x\succ y)\prec z=\ & x\succ (y\prec z), \\
(x\prec y+x\succ y) \succ z  = \ & x\succ(y \succ z). 
\end{align*}
\mlabel{defn:dd}
\end{defn}

\begin{defn}\cite{ZG}
Let $\Omega$ be a semigroup. A {\bf dendriform family algebra} is a $\bfk$-module $D$ with a family of binary operations $\{\prec_{\omega},\succ_\omega\,\mid \omega \in \Omega\}$ such that for $ x, y, z\in D$ and $\alpha,\beta\in \Omega$,
\begin{align}
(x\prec_{\alpha} y) \prec_{\beta} z=\ & x \prec_{\bim{\alpha \beta}} (y\prec_{\beta} z+y \succ_{\alpha} z), \mlabel{eq:ddf1} \\
(x\succ_{\alpha} y)\prec_{\beta} z=\ & x\succ_{\alpha} (y\prec_{\beta} z),\mlabel{eq:ddf2} \\
(x\prec_{\beta} y+x\succ_{\alpha} y) \succ_{\bim{\alpha \beta}}z  =\ & x\succ_{\alpha}(y \succ_{\beta} z). \mlabel{eq:ddf3}
\end{align}
\mlabel{defn:dend}
\end{defn}

The concept of tridendriform family algebras was also introduced in~\cite{ZG}, which is a generalization of tridendriform algebras invented by Loday and Ronco~~\mcite{LoRo04} in the study of polytopes and Koszul duality.

\begin{defn}\cite{LoRo04}
 A {\bf tridendriform  algebra} is a $\bfk$-module $T$ equipped with
three binary operations $\prec, \succ$
and $\cdot$ such that for $ x, y, z\in T$,
\begin{align*}
(x\prec y)\prec z=\ & x\prec (y\prec z+y \succ z + y \cdot z), \\
(x\succ y)\prec z=\ & x\succ(y\prec z), \\
(x\prec y + x\succ y + x \cdot y) \succ z  = \ & x\succ (y\succ z),  \\
(x\succ y)\cdot  z=\  & x\succ(y \cdot  z), \\
(x\prec y)\cdot  z= \ & x \cdot (y\succ z), \\ 
(x\cdot y)\prec z= \ & x\cdot (y\prec z), \\ 
(x\cdot y)\cdot  z= \ & x\cdot  (y\cdot  z).
\end{align*}
\mlabel{defn:td}
\end{defn}

\begin{defn}\cite{ZG}
Let $\Omega$ be a semigroup. A {\bf tridendriform family algebra} is a $\bfk$-module $T$ equipped with
a family of binary operations $\{\prec_{\omega}, \succ_\omega \,\mid \omega \in \Omega\}$
and a binary operation $\cdot$ such that for $ x, y, z\in T$ and $\alpha,\beta\in \Omega$,
\begin{align}
(x\prec_{\alpha} y)\prec_{\beta} z=\ & x\prec_{\bim{\alpha \beta}} (y\prec_{\beta} z+y \succ_{\alpha} z + y \cdot z), \mlabel{eq:tdf1}\\
(x\succ_{\alpha} y)\prec_{\beta} z=\ & x\succ_{\alpha}(y\prec_{\beta} z), \mlabel{eq:tdf2}\\
(x\prec_{\beta} y + x\succ_{\alpha} y + x \cdot y) \succ_{\bim{\alpha \beta}}z  = \ & x\succ_{\alpha} (y\succ_{\beta} z), \mlabel{eq:tdf3} \\
(x\succ_{\alpha} y)\cdot  z=\  & x\succ_{\alpha}(y \cdot  z), \mlabel{eq:tdf4}\\
(x\prec_{\alpha} y)\cdot  z= \ & x \cdot (y\succ_{\alpha} z), \mlabel{eq:tdf5}\\ 
(x\cdot y)\prec_{\alpha} z= \ & x\cdot (y\prec_{\alpha} z), \mlabel{eq:tdf6}\\ 
(x\cdot y)\cdot  z= \ & x\cdot  (y\cdot  z).\mlabel{eq:tdf7}
\end{align}
\end{defn}

\begin{remark}
When the semigroup $\Omega$ is taken to be the trivial monoid with one single element, a dendriform (resp.~tridendriform) family algebra is precisely a
dendriform (resp.~tridendriform) algebra.
\end{remark}

Let $\mathfrak{RBF_\lambda}$ be the category of Rota-Baxter family algebras of weight $\lambda$,
and let $\mathfrak{{DDF}}$ (resp. $\mathfrak{{DTF}}$) be the category of dendriform (resp. tridendriform) family algebras.
A functor $\varepsilon:\mathfrak{RBF_\lambda}\ra \mathfrak{DTF}$ has been introduced in~\cite[Theorem 4.4]{ZG}. Further we have

\begin{prop}
Let $\Omega$ be a semigroup.
\begin{enumerate}
\item  A Rota-Baxter family algebra $(R, \,\{P_{\omega} \mid \omega \in \Omega\} )$ of weight $\lambda$ induces a dendriform family algebra
$(R, \, \{\prec_{{\omega}}, \succ_{{\omega}}  \, \mid \omega \in \Omega\})$, where
\begin{equation*}
x\prec_{{\omega}}y := xP_{\omega}(y)+\lambda xy\,\text{ and }\, x\succ_{{\omega}}y := P_{\omega}(x)y,\,\text{ for }\, x, y \in R.
\mlabel{eq:RBTD3}
\end{equation*}
\mlabel{it:rbtodend}

\item A tridendriform family algebra $(T,\{\prec_\omega,\succ_\omega\mid\omega\in\Omega\},\cdot)$ induces a
dendriform family algebra $(T,\{\prec'_\omega,\succ'_\omega\mid\omega\in\Omega\})$, where
  $$x \prec'_\omega y:= x \prec_\omega y + x \cdot y \,\text{ and }\ x \succ'_\omega y:= x \succ_\omega y, \,\text{ for }\, x, y \in T.$$
\mlabel{it:tritodend}
\end{enumerate}
\mlabel{prop:RBTD3}
\end{prop}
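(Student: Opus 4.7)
The plan is to verify the three dendriform family axioms \eqref{eq:ddf1}--\eqref{eq:ddf3} directly from the definitions, using the Rota-Baxter family identity for part~(a) and the seven tridendriform family axioms \eqref{eq:tdf1}--\eqref{eq:tdf7} for part~(b). No auxiliary construction is needed; both statements reduce to term-by-term bookkeeping inside the ambient algebra.

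For (a), I substitute $x \prec_\omega y = xP_\omega(y) + \lambda xy$ and $x \succ_\omega y = P_\omega(x) y$ into each axiom. Axiom \eqref{eq:ddf2} is immediate from associativity of $R$, since both sides equal $P_\alpha(x)\, y\, P_\beta(z) + \lambda P_\alpha(x)\, y\, z$. Axiom \eqref{eq:ddf3} reduces, after expansion, to the Rota-Baxter family identity applied to $P_\alpha(x)$ and $P_\beta(y)$ and then right-multiplied by $z$. The most delicate point is axiom \eqref{eq:ddf1}: expanding $(x \prec_\alpha y) \prec_\beta z$ produces a term $x\, P_\alpha(y)\, P_\beta(z)$, to which one applies the Rota-Baxter family identity to obtain three summands inside $x\, P_{\alpha\beta}(-)$; combined with the remaining linear-in-$\lambda$ and quadratic-in-$\lambda$ pieces, the total must match the expansion of $x \prec_{\alpha\beta}(y \prec_\beta z + y \succ_\alpha z)$ term by term.

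For (b), I substitute $x \prec'_\omega y = x \prec_\omega y + x \cdot y$ and $x \succ'_\omega y = x \succ_\omega y$ into \eqref{eq:ddf1}--\eqref{eq:ddf3}. Axiom \eqref{eq:ddf3} collapses at once: since $\succ'$ coincides with $\succ$, the left-hand side becomes $(x \prec_\beta y + x \cdot y + x \succ_\alpha y) \succ_{\alpha\beta} z$, which by \eqref{eq:tdf3} equals the right-hand side. Axiom \eqref{eq:ddf2} splits into \eqref{eq:tdf2} and \eqref{eq:tdf4} according to the two summands produced when the outer $\prec'$ is expanded. Axiom \eqref{eq:ddf1} is the key point: distributing both occurrences of $\prec'$ as $\prec + \cdot$ produces four cross-terms on the left, which are handled respectively by \eqref{eq:tdf1}, \eqref{eq:tdf5}, \eqref{eq:tdf6} and \eqref{eq:tdf7}; a careful reorganisation of the arguments of $\prec_{\alpha\beta}$ and $\cdot$ then identifies the sum with the expansion of $x \prec'_{\alpha\beta}(y \prec'_\beta z + y \succ'_\alpha z)$.

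The main obstacle is purely combinatorial bookkeeping in axiom \eqref{eq:ddf1} of both parts. In (a) one must track the powers of $\lambda$ and the subscripts $\alpha$, $\beta$, $\alpha\beta$ produced by the Rota-Baxter family identity; in (b) one must align the four tridendriform identities with the four cross-terms obtained by distributing $\prec'$ over $\prec + \cdot$. No conceptual difficulty appears beyond the axioms themselves.
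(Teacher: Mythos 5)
Your proposal is correct and follows essentially the same route as the paper: part (a) is proved by direct substitution, with \eqref{eq:ddf2} reduced to associativity and \eqref{eq:ddf1}, \eqref{eq:ddf3} reduced to the Rota-Baxter family identity applied to $P_\alpha(y)P_\beta(z)$ and $P_\alpha(x)P_\beta(y)$ respectively, exactly as in the paper's computation. For part (b), which the paper dismisses as straightforward, your explicit matching of the four cross-terms with \eqref{eq:tdf1}, \eqref{eq:tdf5}, \eqref{eq:tdf6}, \eqref{eq:tdf7} (and of \eqref{eq:ddf2} with \eqref{eq:tdf2}, \eqref{eq:tdf4}) is the intended verification and checks out.
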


\begin{proof}
\mref{it:rbtodend} For any $x,y,z\in R$ and $\alpha,\beta\in\Omega$,
\begin{align*}
(x\prec_\alpha y)\prec_\beta z=& \ (xP_\alpha(y)+\lambda xy)\prec_\beta z\\
=& \ (xP_\alpha(y)+\lambda xy)P_\beta(z)+\lambda(xP_\alpha(y)+\lambda xy)z\\
=& \ xP_\alpha(y)P_\beta(z)+\lambda xyP_\beta(z)+\lambda xP_\alpha(y)z+\lambda^{2}xyz\\
=& \ xP_{\alpha\beta}(yP_\beta(z)+P_\alpha(y)z+\lambda yz)+\lambda x(yP_\beta(z)+P_\alpha(y)z+\lambda yz)\\
=& \ x\prec_{\alpha\beta}(yP_\beta(z)+P_\alpha(y)z+\lambda yz)\\
=& \ x\prec_{\alpha\beta}(y\prec_\beta z+y\succ_\alpha z),\\
(x\succ_\alpha y)\prec_\beta z=& \ P_\alpha(x)y\prec_\beta z=P_\alpha(x)yP_\beta(z)+\lambda P_\alpha(x)yz\\
=& \ P_\alpha(x)(yP_\beta(z)+\lambda yz)=P_\alpha(x)(y\prec_\beta z)\\
=& \ x\succ_\alpha(y\prec_\beta z),\\
x\succ_\alpha(y\succ_\beta z)=& \ x\succ_\alpha(P_\beta(y)z)=P_\alpha(x)(P_\beta(y)z)=(P_\alpha(x)P_\beta(y))z\\
=& \ P_{\alpha\beta}(P_\alpha(x)y+xP_\beta(y)+\lambda xy)z\\
=& \ (P_\alpha(x)y+xP_\beta(y)+\lambda xy)\succ_{\alpha\beta} z\\
=& \ (x\succ_\alpha y+x\prec_\beta y)\succ_{\alpha\beta} z.
\end{align*}

\mref{it:tritodend} The proof is straightforward.
\end{proof}

Let
$\eta:\mathfrak{RBF_\lambda}\ra \mathfrak{DDF} \,\text{ and }\, \gamma:\mathfrak{DTF} \ra \mathfrak{DDF}$
be the functors obtained in Proposition~\mref{prop:RBTD3}.
Then we have the commutative diagram:
\[
\xymatrix{
  \mathfrak{RBF_\lambda} \ar[dr]_{\eta} \ar[r]^{\varepsilon}
                & \mathfrak{{DTF}} \ar[d]^{\gamma}  \\
                &  \mathfrak{{DDF}}            }
\]

The following result precises the link between (tri)dendriform family algebras and ordinary (tri)dendriform algebras.

\begin{theorem}
\begin{enumerate}
\item Let $(D, \{\prec_\omega,\succ_\omega\mid\omega\in\Omega\})$ be a dendriform family. Then $(D\ot\bfk\Omega,\prec,\succ)$ is a dendriform algebra, where
\begin{align*}
(x\ot\alpha)\prec(y\ot\beta):=&(x\prec_\beta y)\ot\alpha\beta\\
 (x\ot\alpha)\succ(y\ot\beta):=& (x\succ_\alpha y)\ot\alpha\beta,\,
\text{ for }\, x,y\in D \text{ and } \alpha,\beta\in \Omega.
\end{align*}
\mlabel{item:prsu1}

\item Let $(T, \{\prec_\omega, \succ_\omega\mid\omega\in\Omega\},\cdot)$ be a tridendriform family. Then $(T\ot\bfk\Omega,\prec,\succ,\bullet)$ is a tridendriform algebra , where
\begin{align*}
(x\ot\alpha)\prec(y\ot\beta):=& (x\prec_\beta y)\ot\alpha\beta,\\
(x\ot\alpha)\succ(y\ot\beta):= & (x\succ_\alpha y)\ot\alpha\beta, \\
(x\ot\alpha)\bullet(y\ot\beta):=&(x\cdot y)\ot\alpha\beta,\,\text{ for }\,x,y\in T\,\text{ and }\, \alpha,\beta\in \Omega.
\end{align*}
\mlabel{item:prsut}
\end{enumerate}
\mlabel{thm:prsut}
\end{theorem}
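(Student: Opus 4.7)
The plan is to prove both parts by direct verification of the defining axioms on elementary tensors $x\ot\alpha$, $y\ot\beta$, $z\ot\gamma$, extending to general elements by bilinearity. The approach mirrors the proof of Theorem~\mref{thm:pp1}: in each computation I would unfold the definitions of $\prec$, $\succ$ (and $\bullet$), push all semigroup products into a single right-hand factor $\alpha\beta\gamma$ using associativity of $\Omega$, and then invoke the appropriate family axiom on the left-hand factor.

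For part~\mref{item:prsu1}, I would check each of the three axioms of Definition~\mref{defn:dd} in turn. For the first axiom, unfolding both sides yields elements of $D\ot\bfk\Omega$ with right tensor factor $\alpha\beta\gamma$, and equality of the left factors is precisely equation~(\mref{eq:ddf1}) with indices $\beta,\gamma$. The second and third axioms reduce analogously to~(\mref{eq:ddf2}) and~(\mref{eq:ddf3}). The subscript conventions chosen in the statement of the theorem---namely that $\prec$ carries the subscript of its right operand and $\succ$ carries the subscript of its left operand---are designed precisely so that these reductions go through without further manipulation.

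Part~\mref{item:prsut} follows the same strategy, with seven axioms to verify. Equations~(\mref{eq:tdf1})--(\mref{eq:tdf3}) are handled exactly as in part~\mref{item:prsu1}. The associativity~(\mref{eq:tdf7}) of $\bullet$ follows at once from associativity of $\cdot$ in $T$ together with associativity of $\Omega$. For the three mixed axioms~(\mref{eq:tdf4})--(\mref{eq:tdf6}), each involves a single indexed family operation on each side, and I would verify by brief inspection that the subscripts match after translation to tensors: for instance, the $\alpha$ in~(\mref{eq:tdf5}) refers on both sides to the semigroup element accompanying the middle operand $y\ot\beta$, which becomes $\beta$ after translation, so the single application of~(\mref{eq:tdf5}) closes the check.

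There is no substantive obstacle: the content is essentially tautological once the indexing scheme is set up correctly. The only care required is bookkeeping---confirming that each family axiom produces matching subscripts on the two sides of the corresponding ordinary (tri)dendriform axiom, which is guaranteed by the asymmetric subscript conventions for $\prec$ and $\succ$ combined with associativity of $\Omega$.
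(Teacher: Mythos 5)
Your proposal is correct and follows essentially the same route as the paper's own proof: direct verification of each (tri)dendriform axiom on elementary tensors, collecting the semigroup factors into $\alpha\beta\gamma$ and invoking the corresponding family axiom (Eqs.~(\mref{eq:ddf1})--(\mref{eq:ddf3}), resp.~(\mref{eq:tdf1})--(\mref{eq:tdf7})) with the indices matched exactly as you describe. The paper likewise writes out only the tridendriform case in detail, treating the dendriform case as similar and easier.
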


\begin{proof}
We only prove Item~\mref{item:prsut}, as the proof of Item~\mref{item:prsu1} is similar and easier. For $x\ot\alpha, y\ot\beta, z\ot\gamma \in T\ot\bfk\Omega,$ we get
\begin{align*}
\Big((x\ot\alpha)\prec(y\ot\beta)\Big)\prec(z\ot\gamma)=& \ \Big((x\prec_\beta y)\ot\alpha\beta\Big)\prec(z\ot\gamma)\\
=& \ \Big((x\prec_\beta y)\prec_\gamma z\Big)\ot\alpha\beta\gamma\\
=& \ \Big(x\prec_{\beta\gamma}(y\prec_\gamma z+y\succ_\beta z+y\cdot z)\Big)\ot\alpha\beta\gamma\quad(\text{by Eq.~(\mref{eq:tdf1}}))\\
=& \ (x\ot\alpha)\prec\Big((y\prec_\gamma z+y\succ_\beta z+y\cdot z)\ot\beta\gamma\Big)\\
=& \ (x\ot\alpha)\prec\Big((y\prec_\gamma z)\ot\beta\gamma+(y\succ_\beta z)\ot\beta\gamma+(y\cdot z)\ot\beta\gamma\Big)\\
=& \ (x\ot\alpha)\prec\Big((y\ot\beta)\prec(z\ot\gamma)+(y\ot\beta)\succ(z\ot\gamma)+(y\ot\beta)\bullet(z\ot\gamma)\Big),\\
\Big((x\ot\alpha)\succ(y\ot\beta)\Big)\prec(z\ot\gamma)=& \ \Big((x\succ_\alpha y)\ot\alpha\beta\Big)\prec(z\ot\gamma)
= \Big((x\succ_\alpha y)\prec_\gamma z\Big)\ot\alpha\beta\gamma\\
=& \ \Big(x\succ_\alpha(y\prec_\gamma z)\Big)\ot\alpha\beta\gamma\quad(\text{by Eq.~(\mref{eq:tdf2}}))\\
=& \ (x\ot\alpha)\succ\Big((y\prec_\gamma z)\ot\beta\gamma\Big)\\
=& \ (x\ot\alpha)\succ\Big((y\ot\beta)\prec(z\ot\gamma)\Big),\\
(x\ot\alpha)\succ\Big((y\ot\beta)\succ(z\ot\gamma)\Big)=& \ (x\ot\alpha)\succ\Big((y\succ_\beta z)\ot\beta\gamma\Big)\\
=& \ \Big(x\succ_\alpha(y\succ_\beta z)\Big)\ot\alpha\beta\gamma\\
=& \ \Big((x\prec_\beta y+x\succ_\alpha y+x\cdot y)\succ_{\alpha\beta}z\Big)\ot\alpha\beta\gamma\quad(\text{by Eq.~(\mref{eq:tdf3}}))\\
=& \ \Big((x\prec_\beta y+x\succ_\alpha y+x\cdot y)\ot\alpha\beta\Big)\succ(z\ot\gamma)\\
=& \ \Big((x\prec_\beta y)\ot\alpha\beta+(x\succ_\alpha y)\ot\alpha\beta+(x\cdot y)\ot\alpha\beta\Big)\succ(z\ot\gamma)\\
=& \ \Big((x\ot\alpha)\prec(y\ot \beta)+(x\ot\alpha)\succ(y\ot\beta)+(x\ot\alpha)\bullet(y\ot\beta)\Big)\succ(z\ot\gamma),\\
\Big((x\ot\alpha)\succ(y\ot\beta)\Big)\bullet(z\ot\gamma)=& \ \Big((x\succ_\alpha y)\ot\alpha\beta\Big)\bullet(z\ot\gamma)\\
=& \ \Big((x\succ_\alpha y)\cdot z\Big)\ot\alpha\beta\gamma
= \Big(x\succ_\alpha(y\cdot z)\Big)\ot\alpha\beta\gamma\quad(\text{by Eq.~(\mref{eq:tdf4}}))\\
=& \ (x\ot\alpha)\succ\Big((y\cdot z)\ot\beta\gamma\Big)\\
=& \ (x\ot\alpha)\succ\Big((y\ot\beta)\bullet(z\ot\gamma)\Big),\\
\Big((x\ot\alpha)\prec(y\ot\beta)\Big)\bullet(z\ot\gamma)=& \ \Big((x\prec_\beta y)\ot\alpha\beta\Big)\bullet(z\ot\gamma)
=\Big((x\prec_\beta y)\cdot z\Big)\ot\alpha\beta\gamma\\
=& \ \Big(x\cdot(y\succ_\beta z)\Big)\ot\alpha\beta\gamma\quad(\text{by Eq.~(\mref{eq:tdf5}}))\\
=& \ (x\ot\alpha)\bullet\Big((y\succ_\beta z)\ot\beta\gamma\Big)\\
=& \ (x\ot\alpha)\bullet\Big((y\ot\beta)\succ(z\ot\gamma)\Big),\\
\Big((x\ot\alpha)\bullet(y\ot\beta)\Big)\prec(z\ot\gamma)=& \ \Big((x\cdot y)\ot\alpha\beta\Big)\prec(z\ot\gamma)
=\Big((x\cdot y)\prec_\gamma z\Big)\ot\alpha\beta\gamma\\
=& \ \Big(x\cdot(y\prec_\gamma z)\Big)\ot\alpha\beta\gamma\quad(\text{by Eq.~(\mref{eq:tdf6}}))\\
=& \  (x\ot \alpha)\bullet\Big((y\prec_\gamma z)\ot\beta\gamma\Big)\\
=& \ (x\ot\alpha)\bullet\Big((y\ot\beta)\prec(z\ot\gamma)\Big),\\
\Big((x\ot\alpha)\bullet(y\ot\beta)\Big)\bullet(z\ot\gamma)=& \ \Big((x\cdot y)\ot\alpha\beta\Big)\bullet(z\ot\gamma)
=\Big((x\cdot y)\cdot z\Big)\ot\alpha\beta\gamma\\
=& \ \Big(x\cdot(y\cdot z)\Big)\ot\alpha\beta\gamma\quad(\text{by Eq.~(\mref{eq:tdf7}}))\\
=& \ (x\ot\alpha)\bullet\Big((y\cdot z)\ot\beta\gamma\Big)\\
=& \ (x\ot\alpha)\bullet\Big((y\ot\beta)\bullet(z\ot\gamma)\Big).
\end{align*}
This completes the proof.
\end{proof}

\section{ Free dendriform family algebras}
\mlabel{sec:dd}
In this section, we construct free dendriform family algebras.
For this, let us first briefly recall the construction of free dendriform algebras.
For details, see~\cite{Lod93, LoRo98, Lody02, Ron02}.

\subsection{Free dendriform algebras}
\mlabel{sub:fdd}
Let $X$ be a set. For $n\geq 0$, let $Y_{n,\,X}$ be the set of planar binary trees with $n+1$ leaves and with internal vertices decorated by elements of $X$. The unique tree with one leaf is denoted by $|$. So we have $Y_{0,\,X}=\{|\}$.
Here are the first few of them.
\begin{align*}
Y_{0,\,X}&=\{|\},\ \
Y_{1,\,X}=\left\{ \stree x\Bigm| x\in X \right\},\ \
Y_{2,\,X}=\left\{
\XX{\xxr{-5}5
\xxhu00x \xxhu{-5}5y
}, \,
\XX{\xxl55
\xxhu00x \xxhu55y
}\Bigm| x,y\in X
\right\},\\
Y_{3,\,X}&=\left\{
\XX[scale=1.6]{\xxr{-4}4\xxr{-7.5}{7.5}
\xxhu00{x} \xxhu[0.1]{-4}4{y} \xxhu[0.1]{-7.5}{7.5}{z}
}, \,
\XX[scale=1.6]{\xxl44\xxl{7.5}{7.5}
\xxhu00{x} \xxhu[0.1]44{y} \xxhu[0.1]{7.5}{7.5}{z}
}, \,
\XX[scale=1.6]{\xxr{-6}6\xxl66
\xxhu00{x} \xxhu[0.1]66{y} \xxhu{-6}6{z}
}, \,
\XX[scale=1.6]{\xxr{-5}5\xxl{-2}8
\xxhu00x
\xxhu[0.1]{-5}5{y} \xxhu[0.1]{-2}8{z}
}, \,
\XX[scale=1.6]{\xxl55\xxr28
\xxhu00x
\xxhu[0.1]55{\,y} \xxhu[0.1]28{z}
}\, \Biggm| x,y,z\in X\right\}.
\end{align*}


For $T\in Y_{m,\,X}, U\in Y_{n,\,X}$ and $x\in X$, the grafting $\vee_x$ of $T$ and $U$ over the vertex $x$ is defined to be the planar binary tree $T\vee_x U\in Y_{m+n+1,\,X}$ obtained by adding a new vertex decorated by $x$ and joining the roots of $T$ and $U$ to the new vertex.\\

\begin{remark}
In the graphical representation above, the edge pointing downwards is the root, the upper edges are the leaves. The other edges, joining two internal vertices, are called internal edges.
\end{remark}
\begin{exam}
Let $T=|$ and $U= \stree y$ with $y\in X$. Then
$$T\vee_x U=|\vee_x \stree y=\XX{\xxl55
\xxhu00x \xxhu55y
}.$$
\end{exam}

Given a planar binary tree $T\in Y_{n,\,X}$ not equal to $|$, there is a unique decomposition $T=T^{l}\vee_x T^{r}$ for some $x\in X$. For example,
$$ \stree x=|\vee_x|,\, \XX{\xxr{-5}5
\xxhu00x \xxhu{-5}5y
}=\stree y\vee_x|,\, \XX{\xxl55
\xxhu00x \xxhu55y
}=|\vee_x\stree y.$$

Let $\mathrm{DD}(X):=\underset{n\geq 1}\bigoplus\,\bfk Y_{n,\,X}$. Define binary operations $\prec$ and $\succ$ on $\mathrm{DD}(X)$
recursively by
\begin{enumerate}
\item $|\succ T:=T\prec |:=T\,\text{ and }\, |\prec T:=T\succ |:=0$ for $T\in Y_{n,\,X}$ with $n\geq 1.$
\item For $T=T^{l}\vee_{x} T^{r}$ and $U=U^{l}\vee_{y} U^{r},$ define
$$T\prec U:=T^{l}\vee_{x} (T^{r}\prec U+T^{r}\succ U),\quad T\succ U:=(T\prec U^{l}+T\succ U^{l})\vee_{y} U^{r}.$$
\end{enumerate}

Let $j$ be the unique linear map from $X$ into $\mathrm{DD}(X)$
defined by $j(x)=\stree x$ for $x\in X$. The following result is well-known.

\begin{theorem}\cite{LoRo98}
Let $X$ be a set. Then
$(\mathrm{DD}(X),\prec,\succ),$ together with the map $j$, is the free dendriform algebra on $X.$
\end{theorem}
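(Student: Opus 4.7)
The plan is to verify the universal property directly: for every dendriform algebra $(A,\prec_A,\succ_A)$ and every map $f\colon X\to A$, we must produce a unique dendriform morphism $\bar f\colon \mathrm{DD}(X)\to A$ satisfying $\bar f\circ j=f$. As a preliminary, I would confirm that $(\mathrm{DD}(X),\prec,\succ)$ is itself a dendriform algebra by checking each axiom of Definition~\ref{defn:dd} via induction on the total number of internal vertices of the three trees involved. The base cases, where one of the trees equals $|$, are handled by the conventions $|\succ T=T=T\prec|$ and $|\prec T=T\succ|=0$; the inductive step unfolds the recursive definition on a grafting $T^l\vee_x T^r$ and applies the induction hypothesis.

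To construct $\bar f$, I would proceed by induction on the number of internal vertices of the input tree. On a generator set $\bar f(\stree x):=f(x)$; on $T=T^l\vee_x T^r$ with at least two internal vertices, set
\[
\bar f(T^l\vee_x T^r):=\bar f(T^l)\succ_A\bigl(f(x)\prec_A\bar f(T^r)\bigr),
\]
with the conventions $\bar f(|)\succ_A a:=a$ and $a\prec_A\bar f(|):=a$ collapsing the relevant factor when $T^l$ or $T^r$ is $|$. This formula is suggested, and then forced, by the identity
\[
T^l\vee_x T^r=T^l\succ(\stree x\prec T^r),
\]
which holds in $\mathrm{DD}(X)$ for every grafting (again with the $|$-conventions absorbing the degenerate cases) and which I would verify directly from the recursive definition of $\prec,\succ$. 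The same identity shows that $j(X)$ generates $\mathrm{DD}(X)$ as a dendriform algebra, yielding the uniqueness clause.

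The core of the argument is the verification that $\bar f$ is a dendriform morphism, that is,
\[
\bar f(T\prec U)=\bar f(T)\prec_A\bar f(U),\qquad \bar f(T\succ U)=\bar f(T)\succ_A\bar f(U),
\]
which I would prove by induction on the total number of internal vertices of $T$ and $U$. Writing $T=T^l\vee_x T^r$ and $U=U^l\vee_y U^r$, one unfolds the recursive definitions of $\prec,\succ$ on $\mathrm{DD}(X)$ and of $\bar f$, applies the induction hypothesis, and then uses the three dendriform axioms in $(A,\prec_A,\succ_A)$ to reassemble. I expect the main obstacle to be the case analysis on which of $T^l,T^r,U^l,U^r$ equals $|$: each subcase produces a slightly different reduction, and keeping the $|$-conventions on $\mathrm{DD}(X)$ aligned with those built into the formula for $\bar f$ is where the bookkeeping becomes delicate.
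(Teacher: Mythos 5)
Your proposal is correct and is essentially the same argument the paper relies on: the paper only cites this classical result, but its proof of the family analogue (Proposition~\ref{prop:dend} and Theorem~\ref{thm:free1}) follows exactly your scheme, namely verifying the axioms by induction on the recursive grafting decomposition, using the identity $T^l\vee_x T^r=\bigl(T^l\succ \stree{x}\bigr)\prec T^r$ (which agrees with your $T^l\succ(\stree{x}\prec T^r)$ by the middle dendriform axiom) to get generation and hence uniqueness, and defining $\bar f$ recursively by the same case analysis on whether $T^l$ or $T^r$ equals $|$ before checking the morphism property by induction. So your route matches the paper's, down to the bookkeeping you anticipate.
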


\subsection{Free dendriform family algebras}\mlabel{sub:dd}
In this subsection, we apply typed decorated planar binary trees
to construct free dendriform family algebras.
For this, let us first recall typed decorated rooted trees studied in~\cite{BHZ,Foi18}.

For a rooted tree $T$, denote by $V(T)$ (resp. $E(T)$) the set of its vertices (resp. edges).

\begin{defn}\cite{BHZ}
Let $X$ and $\Omega$ be two sets. An {\bf $X$-decorated $\Omega$-typed (abbreviated typed decorated) rooted tree}
is a triple $T = (T,\dec, \type)$, where
\begin{enumerate}
\item $T$ is a rooted tree.
\item $\dec: V(T)\ra X$ is a map.
\item $\type: E(T)\ra \Omega$ is a map.
\end{enumerate}
\mlabel{defn:tdtree}
\end{defn}
In other words, vertices of $T$ are decorated by elements of $X$ and edges of $T$ are decorated by elements of $\Omega$.\\

As we can see from the examples drawn above, the graphical representation of planar binary trees is slightly different from the graphical representation of rooted trees used in \cite{BHZ,Foi18}: the root and the leaves are now edges rather than vertices. Here the set $V(T)$ must be replaced by the set $IV(T)$ of internal vertices of $T$, and the set $E(T)$ must be replaced by the set $IE(T)$ of internal edges, i.e. edges which are neither a leaf nor the root. Hence we propose the following definition:

\begin{defn}
Let $X$ and $\Omega$ be two sets. An {\bf $X$-decorated $\Omega$-typed planar binary tree} is a triple $T = (T,\dec, \type)$, where
\begin{enumerate}
\item $T$ is a planar binary tree.
\item $\dec: IV(T)\ra X$ is a map.
\item $\type: IE(T)\ra \Omega$ is a map.
\end{enumerate}
\mlabel{defn:type}
\end{defn}

Note that in the definition of dendriform family algebras, $\Omega$ is a semigroup.
However in the following construction of free dendriform family algebras, if the semigroup $\Omega$ has no identity element, it will be convenient to consider the monoid $\Omega^{1}:=\Omega\sqcup \{1\}$
obtained from $\Omega$ by adjoining an identity:
 $$1\omega:=\omega1:=\omega,\,\text{ for }\,\omega\in\Omega\,\text{ and }\, 11:=1.$$

Let $X$ be a set and let $\Omega$ be a semigroup. For $n\geq 0,$
let $Y_{n}:=Y_{n,\,X,\,\Omega}$ be the set of $X$-decorated $\Omega^{1}$-typed planar binary trees with $n+1$ leaves,
such that leaves are decorated by the identity 1 in $\Omega^{1}$ and internal edges are decorated by elements of $\Omega.$ 
Let us expose some examples for better understanding.
For convenience, we omit the decoration 1 in the sequel.
\begin{align*}
Y_0&=\{|\},\ \
Y_1=\left\{\stree x\Bigm|x\in X
\right\},\ \
Y_2=\left\{
\XX{\xxr{-5}5
\node at (-0.4,0) {$\alpha$};
\xxhu00x \xxhu{-5}5y
}, \,
\XX{\xxl55
\node at (0.4,0) {$\alpha$};
\xxhu00x \xxhu55y
}\Bigm|x,y\in X,\alpha\in\Omega
\right\},\\
Y_3&=\left\{
\XX[scale=1.6]{\xxr{-4}4\xxr{-7.5}{7.5}
\node at (-0.35,0.1) {$\alpha$};
\node at (-0.75,0.4) {$\beta$};
\xxhu00{x} \xxhu[0.1]{-4}4{y} \xxhu[0.1]{-7.5}{7.5}{z}
}, \,
\XX[scale=1.6]{\xxl44\xxl{7.5}{7.5}
\node at (0.35,0.1) {$\alpha$};
\node at (0.7,0.4) {$\beta$};
\xxhu00{x} \xxhu[0.1]44{y} \xxhu[0.1]{7.5}{7.5}{z}
}, \,
\XX[scale=1.6]{\xxr{-6}6\xxl66
\node at (-0.5,0.15) {$\beta$};
\node at (0.45,0.15) {$\alpha$};
\xxhu00{x} \xxhu66{y} \xxhu{-6}6{z}
}, \,
\XX[scale=1.6]{\xxr{-5}5\xxl{-2}8
\node at (-0.4,0.1) {$\alpha$};
\node at (-0.2,0.55) {\tiny $\beta$};
\xxhu00x
\xxhu[0.1]{-5}5{y} \xxhu[0.1]{-2}8{z}
}, \,
\XX[scale=1.6]{\xxl55\xxr28
\node at (0.45,0.15) {$\alpha$};
\node at (0.22,0.5) {\tiny $\beta$};
\xxhu[0.1]00{x\,}
\xxhu[0.1]55{\,y} \xxhu[0.1]28{z}
},\ldots \Bigg|\,x,y,z\in X,\alpha,\beta\in\Omega\right\}.
\end{align*}

For $T\in Y_{m}, U\in Y_{n}$, $x\in X$ and $\alpha,\beta\in\Omega^{1},$ the grafting $\Ve x,\alpha,\beta;$ of $T$ and $U$ over $x$ and $(\alpha,\beta)$ is defined to be the tree $T\Ve x,\alpha,\beta; U\in Y_{m+n+1}$, obtained by adding a new vertex decorated by $x$ and joining the roots of $T$ and $U$ with the new vertex via two new edges decorated by $\alpha$ and $\beta$ respectively.
The {\bf depth} $\dep{(T)}$ of a rooted tree $T$ is the maximal length of linear chains of vertices from the root to the leaves of the tree.
For example,
$$\dep\biggl(\stree x\biggr) = 1\, \text{ and } \, \dep\biggl(\XX{\xxr{-5}5
\node at (-0.4,0) {$\alpha$};
\xxhu00x \xxhu{-5}5y
}\biggr) = 2.$$

\begin{exam}
Let $T=\stree x$ and $U= \stree y$ with $x,y\in X$. For $z\in X$ and $\alpha,\beta\in\Omega,$
$$T\Ve z,\alpha,\beta; U=\stree x\Ve z,\alpha,\beta; \stree y=\XX{\xxr{-6}6\xxl66
\node at (-0.5,0.1) {$\alpha$};
\node at (0.55,0.1) {$\beta$};
\xxhu00{z} \xxhu66{y} \xxhu{-6}6{x}
}.$$
\end{exam}

Given a typed decorated planar binary tree $T\in Y_n$ not equal to  $|$, there is a unique decomposition
$$T=T^{l}\Ve x,\alpha,\beta; T^{r}\,\text{ for some }\, x\in X\,\text{ and }\, \alpha,\beta\in \Omega^{1}.$$
For example,
\begin{equation*}
 \stree x=|\Ve x,1,1;|,\, \XX{\xxr{-5}5
\xxhu00x \xxhu{-5}5y
\node at (-0.5,-0.1) {$\alpha$};
}=\stree y\Ve x,\alpha,1;|,\, \XX{\xxl55
\xxhu00x \xxhu55y
\node at (0.45,0) {$\alpha$};
}=|\Ve x,1,\alpha;\stree y,
\end{equation*}
\begin{equation*}
\XX{\xxr{-6}6\xxl66
\node at (-0.5,0.1) {$\alpha$};
\node at (0.55,0.1) {$\beta$};
\xxhu00{x} \xxhu66{z} \xxhu{-6}6{y}
}=\stree y\Ve x,\alpha,\beta;\stree z.
\end{equation*}
Denote by
$$\DDF:=\underset{n\geq 1}\bigoplus\,\bfk Y_{n}.$$

\begin{defn}\label{def:freeDF}
Let $X$ be a set and let $\Omega$ be a semigroup. We define binary operations $\{\prec_\omega,\succ_\omega\mid\omega\in\Omega\}$ on $\DDF$ recursively on $\dep(T)+\dep(U)$ by
\begin{enumerate}
\item $|\succ_\omega T:=T\prec_\omega |:=T\,\text{ and }\, |\prec_\omega T:=T\succ_\omega |:=0$ for $\omega\in\Omega$ and $T\in Y_{n}$ with $n\geq 1$.\mlabel{it:intial1}

\item For $T=T^{l}\Ve x,\alpha_1,\alpha_2; T^{r}$ and $U=U^{l}\Ve y,\beta_1,\beta_2; U^{r},$ define
\begin{align}
T\prec_\omega U:=& \ T^{l}\Ve x,\alpha_1,\alpha_2\omega; (T^{r}\prec_\omega U+T^{r}\succ_{\alpha_2} U),\mlabel{eq:recur1}\\
T\succ_\omega U:=& \ (T\prec_{\beta_1} U^{l}+T\succ_\omega U^{l})\Ve y,\omega\beta_1,\beta_2; U^{r},\,\text{ where }\, \omega \in \Omega.\mlabel{eq:recur}
\end{align}
\end{enumerate}
\mlabel{defn:recur}
\end{defn}

In the following, we employ the convention that
\begin{equation}
|\succ_1 T:=T\prec_1 |:=T\,\text{ and }\, |\prec_1 T:=T\succ_1 |:=0.
\mlabel{eq:conv}
\end{equation}

\begin{exam}
Let $T=\stree x$ and $U=\stree y$ with $x,y\in X$. For $\omega\in \Omega,$ we have
\begin{align*}
\stree x\prec_\omega \stree y
=& \ (|\Ve x,1,1;|)\prec_\omega \stree y\\
=& \ |\Ve x,1,\omega;\biggl(|\prec_\omega \stree y+|\succ_1\stree y\biggr)\quad(\text{by Eq.~(\mref{eq:recur1})})\\
=& \ |\Ve x,1,\omega;\stree y\quad(\text{by Item~\mref{it:intial1} of Definition~\mref{defn:recur} and Eq.~(\mref{eq:conv})})\\
=& \ \XX{\xxl55
\node at (0.4,0) {$\omega$};
\xxhu00x \xxhu55y
},\\
\stree x\succ_\omega \stree y
=& \ \stree x\succ_\omega (|\Ve y,1,1;|)\\
=& \ \biggl(\stree x\prec_1 |+\stree x\succ_\omega |\biggr)\Ve y,\omega,1;|\quad(\text{by Eq.~(\mref{eq:recur})})\\
=& \ \stree x\Ve y,\omega,1;|\quad(\text{by Item~\mref{it:intial1} of Definition~\mref{defn:recur} and Eq.~(\mref{eq:conv})})\\
=& \ \XX{\xxr{-5}5
\node at (-0.45,0) {$\omega$};
\xxhu00y \xxhu{-5}5{x}
}.
\end{align*}
\mlabel{exam:tu}
\end{exam}

\begin{exam}
Let $T=\XX{\xxr{-6}6\xxl66
\node at (-0.5,0.1) {$\alpha$};
\node at (0.55,0.1) {$\beta$};
\xxhu00{x} \xxhu66{u} \xxhu{-6}6{z}
}$ and $U=\stree y$ with $x, y,z,u\in X$ and $\alpha,\beta\in\Omega.$ For $\omega\in\Omega,$
\begin{align*}
T\prec_\omega U=& \ \biggl(\stree z\Ve x,\alpha,\beta;\stree u\biggr)\prec_\omega \stree y\\
=& \ \stree z\Ve x,\alpha,\beta\omega;\biggl(\stree u\prec_\omega\stree y +\stree u\succ_{\beta}\stree y\biggr)\\
=& \ \stree z\Ve x,\alpha,\beta\omega;\biggl(\XX{\xxl55
\node at (0.4,0) {$\omega$};
\xxhu00u \xxhu55y
}+\XX{\xxr{-5}5
\node at (-0.5,-0.1) {$\beta$};
\xxhu00y \xxhu{-5}5{u}
}\biggr)\quad(\text{by Example}~\mref{exam:tu})\\
=& \ \XX[scale=1.6]{\xxl44\xxl{7.5}{7.5}\xxr{-7}7
\node at (-0.6,0.3) {$\alpha$};
\node at (0.4,0) {$\beta\omega$};
\node at (0.75,0.45) {$\omega$};
\xxhu00{x} \xxhu[0.1]44{u} \xxhu[0.1]{7.5}{7.5}{y} \xxhu[0.1]{-7}7{z}
}+\XX[scale=1.6]{\xxl55\xxr28\xxr{-7}7
\node at (-0.6,0.3) {$\alpha$};
\node at (0.5,0.1) {$\beta\omega$};
\node at (0.22,0.5) {\tiny$\beta$};
\xxhu[0.1]00{x\,}
\xxhu[0.1]55{\,y} \xxhu[0.1]28{u} \xxhu{-7}7{z}
},\\
T\succ_\omega U=& \ \XX{\xxr{-6}6\xxl66
\node at (-0.5,0.1) {$\alpha$};
\node at (0.55,0.1) {$\beta$};
\xxhu00{x} \xxhu66{u} \xxhu{-6}6{z}
}\succ_\omega (|\Ve y,1,1;|)
= \biggl(\XX{\xxr{-6}6\xxl66
\node at (-0.5,0.1) {$\alpha$};
\node at (0.55,0.1) {$\beta$};
\xxhu00{x} \xxhu66{u} \xxhu{-6}6{z}
}\prec_1|+\XX{\xxr{-6}6\xxl66
\node at (-0.5,0.1) {$\alpha$};
\node at (0.55,0.1) {$\beta$};
\xxhu00{x} \xxhu66{u} \xxhu{-6}6{z}
}\succ_\omega |\biggr)\Ve y,\omega,1;|\\
=& \ \XX{\xxr{-6}6\xxl66
\node at (-0.5,0.1) {$\alpha$};
\node at (0.55,0.1) {$\beta$};
\xxhu00{x} \xxhu66{u} \xxhu{-6}6{z}
}\Ve y,\omega,1;|\quad(\text{by Item~\mref{it:intial1} of Definition~\mref{defn:recur} and Eq.~(\mref{eq:conv})})\\
=& \ \XX[scale=1.6]{\xxr{-5}5\xxl{-2}8\xxr{-8}{8}
\node at (-0.4,0.1) {$\omega$};
\node at (-0.2,0.55) {\tiny $\beta$};
\node at (-0.75,0.5) {$\alpha$};
\xxhu00y
\xxhu[0.1]{-5}5{x} \xxhu[0.1]{-2}8{u}\xxhu[0.1]{-8}{8}{z}
}.
\end{align*}
\end{exam}

\begin{prop}
Let $X$ be a set and let $\Omega$ be a semigroup.
Then $(\DDF, \{\prec_\omega,\succ_\omega\mid \omega\in \Omega\})$ is a dendriform family algebra, generated by $\biggl\{\stree x\Bigm| x\in X\biggr\}.$
\mlabel{prop:dend}
\end{prop}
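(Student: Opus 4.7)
The plan is twofold: first, verify that the operations $\{\prec_\omega, \succ_\omega \mid \omega \in \Omega\}$ on $\DDF$ satisfy the three dendriform family axioms (\mref{eq:ddf1})--(\mref{eq:ddf3}); second, show that every tree in $\bigcup_{n\geq 1} Y_n$ lies in the subalgebra generated by $\{\stree x \mid x \in X\}$.

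For the generation statement, I induct on the number of internal vertices of the tree. The case of one internal vertex $\stree x$ is trivial. Otherwise, write $T = T^l \Ve x,\alpha_1,\alpha_2; T^r$. When $T^l = |$ (forcing $\alpha_1 = 1$) and $T^r \neq |$, the recursion (\mref{eq:recur1}) gives $\stree x \prec_{\alpha_2} T^r = |\Ve x,1,\alpha_2; T^r = T$. When $T^r = |$ (forcing $\alpha_2 = 1$) and $T^l \neq |$, recursion (\mref{eq:recur}) together with convention (\mref{eq:conv}) gives $T^l \succ_{\alpha_1} \stree x = T^l\Ve x,\alpha_1,1;| = T$. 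When both $T^l, T^r$ are nontrivial, a short calculation shows $T^l \succ_{\alpha_1}\bigl(\stree x \prec_{\alpha_2} T^r\bigr) = T$. In every case $T^l$ and $T^r$ have fewer internal vertices than $T$, closing the induction.

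For the three axioms, I proceed by induction on $\dep(T)+\dep(U)+\dep(V)$, where $T,U,V$ denote the three arguments (playing the roles of $x,y,z$ in the axiom statements). To establish (\mref{eq:ddf1}), decompose $T = T^l\Ve x,\alpha_1,\alpha_2; T^r$. By (\mref{eq:recur1}), both sides of $(T\prec_\alpha U)\prec_\beta V = T\prec_{\alpha\beta}(U\prec_\beta V + U\succ_\alpha V)$ take the form $T^l\Ve x,\alpha_1,\alpha_2\alpha\beta; (\cdots)$, using the associativity $\alpha_2(\alpha\beta) = (\alpha_2\alpha)\beta$ in $\Omega$. Matching the inner parts amounts to showing
\[
(T^r\prec_\alpha U + T^r\succ_{\alpha_2} U)\prec_\beta V + (T^r\prec_\alpha U + T^r\succ_{\alpha_2} U)\succ_{\alpha_2\alpha} V
\]
equals $T^r\prec_{\alpha\beta}(U\prec_\beta V+U\succ_\alpha V) + T^r\succ_{\alpha_2}(U\prec_\beta V+U\succ_\alpha V)$. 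When $T^r \neq |$, this follows by applying the three axioms inductively to $T^r, U, V$, whose total depth is strictly smaller. When $T^r = |$, one has $\alpha_2 = 1$, and both sides collapse directly via the initial conditions of Definition~\mref{defn:recur} and convention (\mref{eq:conv}). Axiom (\mref{eq:ddf2}) is verified by decomposing $U = U^l\Ve y,\beta_1,\beta_2; U^r$ and expanding both sides directly from (\mref{eq:recur1}) and (\mref{eq:recur}); no inductive hypothesis is needed, and the two sides coincide term by term. Axiom (\mref{eq:ddf3}) is dual to (\mref{eq:ddf1}): decompose $V = V^l\Ve z,\gamma_1,\gamma_2; V^r$ and expand via (\mref{eq:recur}); the identity reduces to a relation on $T, U, V^l$ that follows from the inductive application of all three axioms, with the boundary case $V^l = |$ (so $\gamma_1 = 1$) handled by direct computation.

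The principal obstacle is the bookkeeping in axioms (\mref{eq:ddf1}) and (\mref{eq:ddf3}): one must verify that the compound edge labels produced by iterating the recursions agree on both sides, which rests on the associativity of $\Omega$. The degenerate cases $T^r = |$ in (\mref{eq:ddf1}) and $V^l = |$ in (\mref{eq:ddf3}) require separate verification, but the multiplicative identity of the adjoined monoid $\Omega^1$ absorbs the extra factor cleanly. Once the outer grafts are matched, the inner identities reduce formally to the three axioms applied to subtrees of strictly smaller depth, closing the induction.
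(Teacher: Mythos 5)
Your proposal is correct and follows essentially the same route as the paper: induction on $\dep(T)+\dep(U)+\dep(W)$ for the three axioms (decomposing the first, second, and third argument for (\mref{eq:ddf1}), (\mref{eq:ddf2}), (\mref{eq:ddf3}) respectively, with (\mref{eq:ddf2}) needing no induction), plus an induction with the same three-case analysis for generation by the trees $\stree{x}$. The only cosmetic deviations are inducting on the number of internal vertices rather than on depth, and writing the mixed case as $T^{l}\succ_{\alpha_1}\bigl(\stree{x}\prec_{\alpha_2}T^{r}\bigr)$ instead of the paper's $\bigl(T^{l}\succ_{\alpha_1}\stree{x}\bigr)\prec_{\alpha_2}T^{r}$, both of which equal $T$.
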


\begin{proof}
We proceed in two steps to prove the result.

{\noindent \bf Step 1:} We first prove $(\DDF, \{\prec_\omega,\succ_\omega\mid \omega\in \Omega\})$ is a dendriform family algebra. Let $$T=T^{l}\Ve x,\alpha_1,\alpha_2; T^{r}, U=U^{l}\Ve y,\beta_1,\beta_2; U^{r}, W=W^{l}\Ve z,\gamma_1,\gamma_2; W^{r}\in \DDF.$$
 Then we apply induction on $\dep(T)+\dep(U)+\dep(W)\geq 3.$ For the initial step $\dep(T)+\dep(U)+\dep(W)=3$, we have $$T=\stree x, U=\stree y\,\text{ and }\, W=\stree z$$ and so
\begin{align*}
&(T\prec_\beta U)\prec_\gamma W\\
=& \ \biggl(\stree x\prec_\beta\stree y\biggr)\prec_\gamma\stree z
= \biggl((|\Ve x,1,1;|)\prec_\beta \stree y\biggr)\prec_\gamma \stree z\\
=& \ \Biggl(|\Ve x,1,\beta;\biggl(|\prec_\beta \stree y+|\succ_1\stree y\biggr)\Biggr)\prec_\gamma \stree z\quad(\text{by Eq.~(\mref{eq:recur1})})\\
=& \ \biggl(|\Ve x,1,\beta;\stree y\biggr)\prec_\gamma \stree z\quad(\text{by Item~\mref{it:intial1} in Definition~\mref{defn:recur} and Eq.~(\mref{eq:conv}}))\\
=& \ |\Ve x,1,\beta\gamma;\biggl(\stree y\prec_\gamma \stree z+\stree y\succ_\beta\stree z\biggr)\quad(\text{by Eq.~(\mref{eq:recur1})})\\
=& \ |\Ve x,1,\beta\gamma;\biggl(\XX{\xxl55
\node at (0.4,0) {$\gamma$};
\xxhu00y \xxhu55z
}+\XX{\xxr{-5}5
\node at (-0.45,0) {$\beta$};
\xxhu00z \xxhu{-5}5{y}
}\biggr)\quad(\text{by Example~\mref{exam:tu}})\\
=& \ |\Ve x,1,\beta\gamma;\Biggl(|\prec_{\beta\gamma}\biggl(\XX{\xxl55
\node at (0.4,0) {$\gamma$};
\xxhu00y \xxhu55z
}+\XX{\xxr{-5}5
\node at (-0.45,0) {$\beta$};
\xxhu00z \xxhu{-5}5{y}
}\biggr)+|\succ_1\biggl(\XX{\xxl55
\node at (0.4,0) {$\gamma$};
\xxhu00y \xxhu55z
}+\XX{\xxr{-5}5
\node at (-0.45,0) {$\beta$};
\xxhu00z \xxhu{-5}5{y}
}\biggr)\Biggr)\\
&\quad(\text{by Item~\mref{it:intial1} in Definition~\mref{defn:recur} and Eq.~(\mref{eq:conv}}))\\
=& \ (|\Ve x,1,1;|)\prec_{\beta\gamma}\biggl(\XX{\xxl55
\node at (0.4,0) {$\gamma$};
\xxhu00y \xxhu55z
}+\XX{\xxr{-5}5
\node at (-0.45,0) {$\beta$};
\xxhu00z \xxhu{-5}5{y}
}\biggr)\quad(\text{by Eq.~(\mref{eq:recur1})})\\
=& \ \stree x \prec_{\beta\gamma}\biggl(\XX{\xxl55
\node at (0.4,0) {$\gamma$};
\xxhu00y \xxhu55z
}+\XX{\xxr{-5}5
\node at (-0.45,0) {$\beta$};
\xxhu00z \xxhu{-5}5{y}
}\biggr)\\
=& \ \stree x\prec_{\beta\gamma}\biggl(\stree y\prec_\gamma \stree z+\stree y\succ_\beta \stree z\biggr)\\
=& \ T\prec_{\beta\gamma}(U\prec_\gamma W+U\succ_\beta W),
\end{align*}
verifying Eq.~(\mref{eq:ddf1}). Next,
\begin{align*}
&(T\succ_\alpha U)\prec_\gamma W\\
=& \ \biggl(\stree x\succ_\alpha \stree y\biggr)\prec_\gamma \stree z
=\biggl(\stree x\succ_\alpha(|\Ve y,1,1;|)\biggr)\prec_\gamma \stree z\\
=& \ \Biggl(\biggl(\stree x\succ_\alpha |+\stree x\prec_1|\biggr)\Ve y,\alpha,1;|\Biggr)\prec_\gamma \stree z\quad(\text{by Eq.~(\mref{eq:recur})})\\
=& \ \biggl(\stree x\Ve y,\alpha,1;|\biggr)\prec_\gamma \stree z\quad(\text{by Item~\mref{it:intial1} in Definition~\mref{defn:recur} and Eq.~(\mref{eq:conv}}))\\
=& \ \stree x\Ve y,\alpha,\gamma;\biggl(|\prec_\gamma\stree z+|\succ_1\stree z\biggr)\quad(\text{by Eq.~(\mref{eq:recur1})})\\
=& \ \stree x\Ve y,\alpha,\gamma; \stree z\quad(\text{by Item~\mref{it:intial1} in Definition~\mref{defn:recur} and Eq.~(\mref{eq:conv})})\\
=& \ \biggl(\stree x\succ_\alpha |+\stree x\prec_1|\biggr)\Ve y,\alpha,\gamma;\stree z\quad(\text{by Item~\mref{it:intial1} in Definition~\mref{defn:recur} and Eq.~(\mref{eq:conv}}))\\
=& \ \stree x\succ_\alpha\biggl(|\Ve y,1,\gamma;\stree z\biggr)\quad(\text{by Eq.~(\mref{eq:recur})})\\
=& \ \stree x\succ_\alpha\Biggl(|\Ve y,1,\gamma;\biggl(|\prec_\gamma \stree z+|\succ_1\stree z\biggr)\Biggr)\quad(\text{by Item~\mref{it:intial1} in Definition~\mref{defn:recur} and Eq.~(\mref{eq:conv}}))\\
=& \ \stree x\succ_\alpha\biggl((|\Ve y,1,1;|)\prec_\gamma\stree z\biggr)
=\stree x\succ_\alpha\biggl(\stree y\prec_\gamma \stree z\biggr)\\
=& \ T\succ_\alpha(U\prec_\gamma W),
\end{align*}
verifying Eq.~(\mref{eq:ddf2}). Finally,
\begin{align*}
&T\succ_\alpha(U\succ_\beta W)\\
=& \ \stree x\succ_\alpha\biggl(\stree y\succ_\beta \stree z\biggr)
= \stree x\succ_\alpha\biggl(\stree y\succ_\beta(|\Ve,z,1,1;|)\biggr)\\
=& \ \stree x\succ_\alpha\Biggl(\biggl(\stree y\succ_\beta |+\stree y\prec_1|\biggr)\Ve z,\beta,1;|\Biggr)\quad(\text{by Eq.~(\mref{eq:recur})})\\
=& \ \stree x\succ_\alpha\biggl(\stree y\Ve z,\beta,1;|\biggr)\quad(\text{by Item~\mref{it:intial1} in Definition~\mref{defn:recur} and Eq.~(\mref{eq:conv}}))\\
=& \ \biggl(\stree x\succ_\alpha \stree y+\stree x\prec_\beta\stree y\biggr)\Ve z,\alpha\beta,1;|\quad(\text{by Eq.~(\mref{eq:recur})})\\
=& \ \biggl(\XX{\xxr{-5}5
\node at (-0.45,0) {$\alpha$};
\xxhu00y \xxhu{-5}5{x}
}+\XX{\xxl55
\node at (0.4,0) {$\beta$};
\xxhu00x \xxhu55y
}\biggr)\Ve z,\alpha\beta,1;|\quad(\text{by Example~\mref{exam:tu}})\\
=& \ \Biggl(\biggl(\XX{\xxr{-5}5
\node at (-0.45,0) {$\alpha$};
\xxhu00y \xxhu{-5}5{x}
}+\XX{\xxl55
\node at (0.4,0) {$\beta$};
\xxhu00x \xxhu55y
}\biggr)\succ_{\alpha\beta}|+\biggl(\XX{\xxr{-5}5
\node at (-0.45,0) {$\alpha$};
\xxhu00y \xxhu{-5}5{x}
}+\XX{\xxl55
\node at (0.4,0) {$\beta$};
\xxhu00x \xxhu55y
}\biggr)\prec_1|\Biggr)\Ve z,\alpha\beta,1;|\\
& \quad(\text{by Item~\mref{it:intial1} in Definition~\mref{defn:recur} and Eq.~(\mref{eq:conv}}))\\
=& \ \biggl(\XX{\xxr{-5}5
\node at (-0.45,0) {$\alpha$};
\xxhu00y \xxhu{-5}5{x}
}+\XX{\xxl55
\node at (0.4,0) {$\beta$};
\xxhu00x \xxhu55y
}\biggr)\succ_{\alpha\beta}(|\Ve z,1,1;|)\quad(\text{by Eq.~(\mref{eq:recur})})\\
=& \ \biggl(\stree x\succ_\alpha \stree y+\stree x\prec_\beta\stree y\biggr)\succ_{\alpha\beta}\stree z\quad(\text{by Example~\mref{exam:tu}})\\
=& \ (T\succ_\alpha U+T\prec_\beta U)\succ_{\alpha\beta} W.
\end{align*}
This completes the proof of the initial step. Assume that the conclusion holds for $\dep(T)+\dep(U)+\dep(W)=k$ and consider
the induction step of $\dep(T)+\dep(U)+\dep(W)=k+1\geq 4$. We have
\begin{align*}
 &(T\prec_\beta U)\prec_\gamma W\\
=& \ \Big(T^{l}\Ve x,\alpha_1,\alpha_2\beta;(T^{r}\prec_\beta U+T^{r}\succ_{\alpha_2} U)\Big)\prec_\gamma W\quad(\text{by Eq.~(\mref{eq:recur1}}))\\
=& \ T^{l}\Ve x,\alpha_1,\alpha_2\beta\gamma;\Big((T^{r}\prec_\beta U+T^{r}\succ_{\alpha_2} U)\prec_\gamma W+(T^{r}\prec_\beta U+T^{r}\succ_{\alpha_2} U)\succ_{\alpha_2\beta} W\Big)\\
 &\hspace{4cm} \quad(\text{by Eq.~(\mref{eq:recur1}}))\\
=& \ T^{l}\Ve x,\alpha_1,\alpha_2\beta\gamma;\Big((T^{r}\prec_\beta U)\prec_\gamma W+(T^{r}\succ_{\alpha_2} U)\prec_\gamma W
 +T^{r}\succ_{\alpha_2}(U\succ_\beta W)\Big)\\
 &\hspace{2cm} \quad(\text{by the induction hypothesis and Eq.~(\mref{eq:ddf3}}))\\
=& \ T^{l}\Ve x,\alpha_1,\alpha_2\beta\gamma;\Big(T^{r}\prec_{\beta\gamma}(U\prec_\gamma W+U\succ_\beta W)
 +T^{r}\succ_{\alpha_2}(U\prec_\gamma W)
+T^{r}\succ_{\alpha_2}(U\succ_\beta W)\Big)\\
&\hspace{2cm} \quad(\text{by the induction hypothesis and Eq.~(\mref{eq:ddf1}}))\\
=& \ T^{l}\Ve x,\alpha_1,\alpha_2\beta\gamma;\Big(T^{r}\prec_{\beta\gamma}(U\prec_\gamma W+U\succ_\beta W)+T^{r}\succ_{\alpha_2}(U\prec_\gamma W+U\succ_\beta W)\Big)\\
=& \ (T^{l}\Ve x,\alpha_1,\alpha_2; T^{r})\prec_{\beta\gamma}(U\prec_\gamma W+U\succ_\beta W)\quad(\text{by Eq.~(\mref{eq:recur1}}))\\
=& \ T\prec_{\beta\gamma}(U\prec_\gamma W+U\succ_\beta W).
\end{align*}
This proves Eq.~(\mref{eq:ddf1}) in Definition~\mref{defn:dend}.
Next, we prove Eq.~(\mref{eq:ddf2}):
\begin{align*}
 &(T\succ_\alpha U)\prec_\gamma W\\
=& \ \Big((T\succ_\alpha U^{l}+T\prec_{\beta_1} U^{l})\Ve y,\alpha\beta_1,\beta_2;U^{r}\Big)\prec_\gamma W\quad(\text{by Eq.~(\mref{eq:recur}}))\\
=& \ (T\succ_\alpha U^{l}+T\prec_{\beta_1} U^{l})\Ve y,\alpha\beta_1,\beta_2\gamma;(U^{r}\prec_\gamma W+U^{r}\succ_{\beta_2} W)\quad(\text{by Eq.~(\mref{eq:recur1}}))\\
=& \ T\succ_\alpha\Big(U^{l}\Ve y,\beta_1,\beta_2\gamma;(U^{r}\prec_\gamma W+U^{r}\succ_{\beta_2} W)\Big)\quad(\text{by Eq.~(\mref{eq:recur}}))\\
=& \ T\succ_\alpha\Big((U^{l}\Ve y,\beta_1,\beta_2;U^{r})\prec_\gamma W\Big)\quad(\text{by Eq.~(\mref{eq:recur1}}))\\
=& \ T\succ_\alpha(U\prec_\gamma W).
\end{align*}
\noindent Finally, we prove Eq.~(\mref{eq:ddf3}):
\begin{align*}
 &T\succ_\alpha(U\succ_\beta W)\\
=& \ T\succ_\alpha\Big(U\succ_\beta(W^{l}\Ve z,\gamma_1,\gamma_2;W^{r})\Big)\\
=& \ T\succ_\alpha\Big((U\succ_\beta W^{l}+U\prec_{\gamma_1} W^{l})\Ve z,\beta\gamma_1,\gamma_2;W^{r}\Big)\quad(\text{by Eq.~(\mref{eq:recur}}))\\
=& \ \Big(T\succ_\alpha(U\succ_\beta W^{l}+U\prec_{\gamma_1} W^{l})+T\prec_{\beta\gamma_1}(U\succ_\beta W^{l}+U\prec_{\gamma_1} W^{l})\Big)\Ve z,\alpha\beta\gamma_1,\gamma_2;W^{r}\quad(\text{by Eq.~(\mref{eq:recur}}))\\
=& \ \Big(T\succ_\alpha(U\succ_\beta W^{l})+T\succ_\alpha(U\prec_{\gamma_1} W^{l})+T\prec_{\beta\gamma_1}(U\succ_\beta W^{l}+U\prec_{\gamma_1} W^{l})\Big)\Ve z,\alpha\beta\gamma_1,\gamma_2;W^{r}\\
=& \ \Big((T\succ_\alpha U+T\prec_\beta U)\succ_{\alpha\beta} W^{l}+(T\succ_\alpha U)\prec_{\gamma_1} W^{l}+(T\prec_{\beta}U)\prec_{\gamma_1} W^{l}\Big)\Ve z,\alpha\beta\gamma_1,\gamma_2;W^{r}\\
&\hspace{3cm}(\text{by the induction hypothesis and Eqs.~(\mref{eq:ddf1})-(\mref{eq:ddf3})})\\
=& \ \Big((T\succ_\alpha U+T\prec_\beta U)\succ_{\alpha\beta} W^{l}+(T\succ_\alpha U+T\prec_\beta U)\prec_{\gamma_1} W^{l}\Big)\Ve z,\alpha\beta\gamma_1,\gamma_2;W^{r}\\
=& \ (T\succ_\alpha U+T\prec_\beta U)\succ_{\alpha\beta}(W^{l}\Ve z,\gamma_1,\gamma_2; W^{r})\quad(\text{by Eq.~(\mref{eq:recur}}))\\
=& \ (T\succ_\alpha U+T\prec_\beta U)\succ_{\alpha\beta}W.
\end{align*}
This completes the proof of step 1.

{\noindent \bf Step 2:} We show that $\DDF$ is generated by $\biggl\{\stree x\Bigm| x\in X\biggr\}.$ Let $T=T^{l}\Ve x,\alpha_1,\alpha_2;T^{r}$ with $x\in X$ and $\alpha_1,\alpha_2\in\Omega^{1}$. We employ induction on $\dep(T)\geq 1$.
For the initial step of $\dep(T)=1$, we have
$$T=|\Ve x,1,1;|=\stree x\in\biggl\{\stree x\Bigm| x\in X\biggr\}.$$
Assume that the conclusion holds for $T$ with $\dep(T)=k$ and consider the induction step of $\dep(T)=k+1\geq 2$.
Since $T^l$ and $T^r$ can not be $|$ simultaneously, there exist three cases to consider.

{\noindent \bf Case 1:} $T^{l}=|$ and $T^{r}\neq |$. Then $\alpha_1=1$ and
\begin{align*}
T=& \ T^{l}\Ve x,\alpha_1,\alpha_2;T^{r}=|\Ve x,1,\alpha_2;T^{r}\\
=& \  |\Ve x,1,\alpha_2;(|\prec_{\alpha_2}T^{r}+|\succ_1T^{r})\quad(\text{by Item~\mref{it:intial1} in Definition~\mref{defn:recur} and Eq.~(\mref{eq:conv})})\\
=& \ (|\Ve x,1,1;|)\prec_{\alpha_2}T^{r}\quad(\text{by Eq.~(\mref{eq:recur1})})\\
=& \ \stree x\prec_{\alpha_2}T^{r}.
\end{align*}
By the induction hypothesis, $T^{r}$ is generated by $\biggl\{\stree x\Bigm| x\in X\biggr\}$ and so $T$ is generated by $\biggl\{\stree x\Bigm| x\in X\biggr\}.$

{\noindent \bf Case 2:} $T^{l}\neq|$ and $T^{r}=|$. This case is similar to Case $1$.

{\noindent \bf Case 3:} $T^{l}\neq |$ and $T^{r}\neq|$. Then
\begin{align*}
T=& \ T^{l}\vee_{x,\,(\alpha_1,\,\alpha_2)}T^{r}\\
=& \ T^{l}\vee_{x,\,(\alpha_1,\,\alpha_2)}(|\prec_{\alpha_2}T^{r}+|\succ_1T^{r})\quad(\text{by Item~\mref{it:intial1} in Definition~\mref{defn:recur} and Eq.~(\mref{eq:conv})})\\
=& \ (T^{l}\vee_{x,\,(\alpha_1,\,1)}|)\prec_{\alpha_2}T^{r}\quad(\text{by Eq.~(\mref{eq:recur1})})\\
=& \  \Big((T^{l}\succ_{\alpha_1}|+T^{l}\prec_1|)\vee_{x,\,(\alpha_1,\,1)}|\Big)\prec_{\alpha_2}T^{r}\quad(\text{by Item~\mref{it:intial1} in Definition~\mref{defn:recur} and Eq.~(\mref{eq:conv})})\\
=& \ \Big(T^{l}\succ_{\alpha_1}(|\vee_{x,\,(1,\,1)}|)\Big)\prec_{\alpha_2}T^{r}\quad(\text{by Eq.~(\mref{eq:recur})})\\
= & \ \biggl(T^{l}\succ_{\alpha_1}\stree x\biggr)\prec_{\alpha_2}T^{r}.
\end{align*}
By the induction hypothesis, $T^{l}$ and $T^{r}$ are generated by $\biggl\{\stree x\Bigm| x\in X\biggr\}$ and so $T$ is generated by $\biggl\{\stree x\Bigm| x\in X\biggr\}.$ This completes the proof.
\end{proof}

\noindent Now we arrive at our main result in this section.
\begin{defn}
 Let $X$ be a set and let $\Omega$ be a semigroup.
 A free dendriform family algebra on $X$ is a dendriform family algebra $(D, \{\prec_{\omega},\succ_{\omega}\mid \omega\in \Omega\})$ together with a map $j: X\rightarrow D$ that satisfies the following universal property:
 for any dendriform family algebra $(D', \{\prec'_\omega,\succ'_\omega\mid \omega\in \Omega\})$
and map $f: X\rightarrow D',$ there is a unique dendriform family algebra morphism $\bar{f}:
D\rightarrow D'$ such that $f=\bar{f}\circ j.$ The free dendriform family algebra on $X$ is unique up to isomorphism.
\end{defn}

\noindent Let $j: X \ra \DDF$ be the map defined by $j(x)=\stree x$ for $x\in X$.
\begin{theorem}
Let $X$ be a set and let $\Omega$ be a semigroup. 
Then $(\DDF,\{\prec_\omega,\succ_\omega\mid\omega\in\Omega\})$, together with the map $j$, is the free dendriform family algebra on $X$.
\mlabel{thm:free1}
\end{theorem}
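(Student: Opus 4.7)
The plan is to construct $\bar f: \DDF \to D'$ by recursion on the structure of typed decorated planar binary trees, and then to verify the morphism property by a second induction, leveraging the structural decomposition developed in the proof of Proposition~\mref{prop:dend}. Uniqueness is immediate: Step~2 of that proof shows that $\DDF$ is generated as a dendriform family algebra by $\{\stree x \mid x \in X\}$, so any morphism satisfying $\bar f \circ j = f$ is completely determined by its values on these generators.

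For existence, I would define $\bar f$ recursively on $\dep(T)$, mimicking the three-case decomposition of Step~2. Set $\bar f(\stree x) := f(x)$. For $T = T^l \Ve x,\alpha_1,\alpha_2; T^r$ with $\dep(T) \geq 2$, exactly one of three mutually exclusive cases occurs: if $T^l = |$ (so $\alpha_1 = 1$), set $\bar f(T) := f(x) \prec'_{\alpha_2} \bar f(T^r)$; if $T^r = |$ (so $\alpha_2 = 1$), set $\bar f(T) := \bar f(T^l) \succ'_{\alpha_1} f(x)$; if both $T^l \neq |$ and $T^r \neq |$, set $\bar f(T) := \bigl(\bar f(T^l) \succ'_{\alpha_1} f(x)\bigr) \prec'_{\alpha_2} \bar f(T^r)$. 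Because $T^l$ and $T^r$ have strictly smaller depth than $T$, the recursion is well-founded; extending by $\bfk$-linearity produces a linear map with $\bar f \circ j = f$ by construction.

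The main work, and the part I expect to be the principal obstacle, is to verify that $\bar f$ is actually a morphism of dendriform family algebras, namely
\begin{align*}
\bar f(T \prec_\omega U) &= \bar f(T) \prec'_\omega \bar f(U), & \bar f(T \succ_\omega U) &= \bar f(T) \succ'_\omega \bar f(U),
\end{align*}
for all $T, U \in \DDF$ and $\omega \in \Omega$. I would prove these two identities simultaneously by induction on $\dep(T) + \dep(U)$. The base case $T = \stree x$, $U = \stree y$ reduces to a direct unfolding using Example~\mref{exam:tu}: the trees $\stree x \prec_\omega \stree y$ and $\stree x \succ_\omega \stree y$ fall into Cases~1 and~2 of the definition of $\bar f$, which yields exactly $f(x) \prec'_\omega f(y)$ and $f(x) \succ'_\omega f(y)$ respectively. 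For the inductive step, expand $T$ and $U$ according to the three-case decomposition, apply the recursive formulas~(\mref{eq:recur1}) and~(\mref{eq:recur}) to the left-hand side, unfold $\bar f$ via its defining clauses on each summand, and then invoke the dendriform family axioms~(\mref{eq:ddf1})--(\mref{eq:ddf3}) in $D'$ together with the induction hypothesis to reassemble the right-hand side.

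The combinatorial bookkeeping here essentially runs Step~1 of the proof of Proposition~\mref{prop:dend} ``in reverse'', transporting the identities from $\DDF$ into $D'$; the nine subcases arising from the three-by-three product of decomposition cases for $T$ and $U$ all need to be handled, though several are symmetric or degenerate. The semigroup structure on $\Omega$ enters precisely through the matching of concatenated decoration labels such as $\alpha_2\omega$ and $\omega\beta_1$ appearing in~(\mref{eq:recur1}) and~(\mref{eq:recur}) against the subscript $\alpha\beta$ in the family axioms~(\mref{eq:ddf1}) and~(\mref{eq:ddf3}) in $D'$, which is exactly what makes the identities close.
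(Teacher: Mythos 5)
Your proposal matches the paper's proof in all essentials: uniqueness from the generation statement in Proposition~\mref{prop:dend}, the same three-case recursive definition of $\bar f$ on $\dep(T)$, and the same induction on $\dep(T)+\dep(U)$ using Eqs.~(\mref{eq:recur1})--(\mref{eq:recur}) and the axioms (\mref{eq:ddf1})--(\mref{eq:ddf3}) in $D'$ to verify the morphism property (proving both identities simultaneously, as you suggest, is indeed what the induction implicitly requires). The only cosmetic difference is that the paper cases only on the decomposition of $T$ for the $\prec_\omega$ identity (and of $U$ for $\succ_\omega$), so four cases suffice rather than your nine.
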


\begin{proof}
By Proposition~\mref{prop:dend}, we are left to show that $(\DDF,\{\prec_\omega,\succ_\omega\mid\omega\in\Omega\})$ satisfies the universal property. For this, let $(D,\{\prec'_{\omega},\succ'_{\omega}\mid\omega\in\Omega\})$ be a dendriform family algebra. First of all, if there exists a dendriform family algebra morphism $\bar{f}: \DDF\to D$ extending $f:X\to D$ in the sense that $\bar f(\stree{x})=f(x)$ for any $x\in X$, then such an $\bar f$ is unique, due to the fact that the collection of trees $\left(\stree{x}\right)_{x\in X}$ generate the dendriform family algebra $\DDF$.\\
\ignore{
({\bf Uniqueness.}) Suppose that such $\bar{f}:\DDF\ra D$ exists. Let $T=T^{l}\Ve x,\alpha_1,\alpha_2;T^{r}$ with $x\in X$ and $\alpha_1,\alpha_2\in \Omega^{1}.$ We use induction on $\dep(T)\geq 1.$ For the initial step of $\dep(T)=1$, we have $T=\stree x$ for some $x\in X$ and $$\bar{f}(T)=\bar{f}\biggl(\stree x\biggr)=\bar{f}\circ j(x)=f(x),$$
 which is fixed. Assume that $\bar{f}$ holds for $T$ with $\dep(T)=k$ and consider the induction step of $\dep(T)=k+1\geq 2.$
We have $T\neq \stree x$ for any $x\in X$ and there exist three cases to consider.

{\noindent \bf Case 1:} $T^{l}=|$ and $T^{r}\neq |$. Then $\alpha_1=1$ and
\begin{align*}
\bar{f}(T)=& \ \bar{f}(T^{l}\Ve x,\alpha_1,\alpha_2;T^{r})=\bar{f}(|\Ve x,1,\alpha_2;T^{r})\\
=& \ \bar{f}\biggl(\stree x\prec_{\alpha_2}T^{r}\biggr)\quad(\text{by Case 1 in the proof of Proposition~\mref{prop:dend}})\\
=& \ \bar{f}\biggl(\stree x\biggr)\prec'_{\alpha_2}\bar{f}(T^{r})
=f(x)\prec'_{\alpha_2}\bar{f}(T^{r}).
\end{align*}
By the induction hypothesis, $\bar{f}(T^{r})$ is fixed and so $\bar{f}(T)$ is fixed.

{\noindent \bf Case 2:} $T^{l}\neq|$ and $T^{r}=|$. This case is similar to Case 1.

{\noindent \bf Case 3:} $T^{l}\neq |$ and $T^{r}\neq|$. Then 
\begin{align*}
\bar{f}(T)=& \ \bar{f}(T^{l}\vee_{x,\,(\alpha_1,\,\alpha_2)}T^{r})\\
=& \ \bar{f}\Biggl(\biggl(T^{l}\succ_{\alpha_1}\stree x\biggr)\prec_{\alpha_2}T^{r}\Biggr)\quad(\text{by Case 3 in the proof of Proposition~\mref{prop:dend}})\\
=& \ \Biggl(\bar{f}(T^{l})\succ'_{\alpha_1}\bar{f}\biggl(\stree x\biggr)\Biggr)\prec'_{\alpha_2}\bar{f}(T^{r})\\
= & \ \bigl(\bar{f}(T^{l})\succ'_{\alpha_1}f(x)\bigr)\prec'_{\alpha_2}\bar{f}(T^{r}).
\end{align*}
By the induction hypothesis, $\bar{f}(T^{l})$ and $\bar{f}(T^{r})$ are fixed and so $\bar{f}(T)$ is fixed.
}

\noindent Now let us define a linear map $$\bar{f}: \DDF\rightarrow D,\quad T\mapsto\bar{f}(T)$$ by induction on $\dep(T)\geq 1$.
Let us write $T=T^{l}\Ve x,\alpha_1,\alpha_2;T^{r}$ with $x\in X$ and $\alpha_1,\alpha_2\in\Omega^{1}$. For the initial step $\dep(T)=1$, we have $T=\stree x$ for some $x\in X$ and define
\begin{equation}
\bar{f}(T):=f(x).
\mlabel{eq:init}
\end{equation}
Assume that $\bar{f}(T)$ has been defined for $T$ with $\dep(T)=k$ and consider the induction step of $\dep(T)=k+1\geq 2$.
Note that $T^l$ and $T^r$ can not be $|$ simultaneously and define
\begin{align}
\bar{f}(T):=& \  \bar{f}(T^{l}\Ve x,\alpha_1,\alpha_2;T^{r})\nonumber\\
:=& \ \left\{
\begin{array}{ll}
f(x)\prec'_{\alpha_2}\bar{f}(T^{r}), & \ \text{ if } T^{l}=|\neq T^{r};\\
\bar{f}(T^{l})\succ'_{\alpha_1} f(x), & \ \text{ if } T^{l}\neq |=T^{r};\\
\bigl(\bar{f}(T^{l})\succ'_{\alpha_1}f(x)\bigr)\prec'_{\alpha_2}\bar{f}(T^{r}), & \ \text{ if }T^{l}\neq |\neq T^{r}.
\mlabel{eq:induc}
\end{array}
\right .
\end{align}

\noindent We are left to prove that $\bar{f}$ is a morphism of dendriform family algebras:
 \begin{equation*}
\bar{f}(T\prec_\omega U)=\bar{f}(T)\prec'_\omega\bar{f}(U)\,\text{ and }\,
\bar{f}(T\succ_\omega U)=\bar{f}(T)\succ'_\omega\bar{f}(U),
\mlabel{eq:comff}
\end{equation*}
in which we only prove the first equation by induction on $\dep(T)+\dep(U)\geq 2$,
as the proof of the second one is similar.
Write
$$T=T^{l}\Ve x,\alpha_1,\alpha_2;T^{r}\,\text{ and }\, U=U^{l}\Ve y,\beta_1,\beta_2;U^{r}.$$
For the initial step $\dep(T)+\dep(U)=2$, we have $T=\stree x$ and $U=\stree y$ for some $x,y\in X$.
So
\begin{align*}
\bar{f}(T\prec_\omega U)=& \ \bar{f}\biggl((|\Ve x,1,1;|)\prec_\omega \stree y\biggr)\\
=& \ \bar{f}\Biggl(|\Ve x,1,\omega;\biggl(|\prec_\omega \stree y+|\succ_1\stree y\biggr)\Biggr)\quad(\text{by Eq.~(\mref{eq:recur1})})\\
=& \ \bar{f}\biggl(|\Ve x,1,\omega;\stree y\biggr)\quad(\text{by Item~\mref{it:intial1} in Definition~\mref{defn:recur} and Eq.~(\mref{eq:conv})})\\
=& \ f(x)\prec'_\omega \bar{f}\biggl(\stree y\biggr)\quad(\text{by Eq.~(\mref{eq:induc})})\\
=& \ \bar{f}\biggl(\stree x\biggr)\prec'_\omega \bar{f}\biggl(\stree y\biggr)\quad(\text{by Eq.~(\mref{eq:init})})\\
=& \ \bar{f}(T)\prec'_\omega\bar{f}(U).
\end{align*}
Assume that the conclusion holds for $T$ and $U$ in $\DDF$ with $\dep(T)+\dep(U)=k$ and consider the case when $\dep(T)+\dep(U)=k+1\geq 3,$
there are four cases to consider.

{\noindent \bf Case 1:} $T^{l}=|$ and $T^{r}\neq |$. Then $\alpha_1=1$ and
\begin{align*}
\bar{f}(T\prec_\omega U)=& \ \bar{f}\bigl((|\Ve x,1,\alpha_2;T^{r})\prec_\omega U\bigr)\\
=& \ \bar{f}\bigl(|\Ve x,1,\alpha_2\omega;(T^{r}\prec_\omega U+T^{r}\succ_{\alpha_2}U)\bigr)\quad(\text{by Eq.~(\mref{eq:recur1})})\\
=& \ f(x)\prec'_{\alpha_2\omega}\bar{f}(T^{r}\prec_\omega U+T^{r}\succ_{\alpha_2} U)\quad(\text{by Eq.~(\mref{eq:induc})})\\
=& \ f(x)\prec'_{\alpha_2\omega}\bigl(\bar{f}(T^{r})\prec'_\omega \bar{f}(U)+\bar{f}(T^{r})\succ'_{\alpha_2}\bar{f}(U)\bigr)\\
& \ (\text{by the induction hypothesis on $\dep(T)+\dep(U)$})\\
=& \ \bigl(f(x)\prec'_{\alpha_2}\bar{f}(T^{r})\bigr)\prec'_\omega\bar{f}(U)\quad(\text{by Eq.~(\mref{eq:ddf1})})\\
=& \ \bar{f}(|\Ve x,1,\alpha_2;T^{r})\prec'_\omega \bar{f}(U)\quad(\text{by Eq.~(\mref{eq:induc})})\\
=& \ \bar{f}(T)\prec'_\omega\bar{f}(U).
\end{align*}
{\noindent \bf Case 2:} $T^{l}\neq|$ and $T^{r}=|$. This case is similar to Case 1.

{\noindent \bf Case 3:} $T^{l}\neq|$ and $T^{r}\neq |$. Then
\begin{align*}
\bar{f}(T\prec_\omega U)=& \ \bar{f}\bigl((T^{l}\Ve x,\alpha_1,\alpha_2;T^{r})\prec_\omega U\bigr)\\
=& \ \bar{f}\bigl(T^{l}\Ve x,\alpha_1,\alpha_2\omega;(T^{r}\succ_{\alpha_2} U+T^{r}\prec_\omega U)\bigr)\quad(\text{by Eq.~(\mref{eq:recur1})})\\
=& \ \bigl(\bar{f}(T^{l})\succ'_{\alpha_1} f(x)\bigr)\prec'_{\alpha_2\omega}\bar{f}(T^{r}\succ_{\alpha_2} U+T^{r}\prec_\omega U)\quad(\text{by Eq.~(\mref{eq:induc})})\\
=& \ \bigl(\bar{f}(T^{l})\succ'_{\alpha_1} f(x)\bigr)\prec'_{\alpha_2\omega}\bigl(\bar{f}(T^{r})\succ'_{\alpha_2} \bar{f}(U)+\bar{f}(T^{r})\prec'_\omega\bar{f}(U)\bigr)\\
&\hspace{0.5cm}(\text{by the induction hypothesis on $\dep(T)+\dep(U)$})\\
=& \ \Bigl(\bigl(\bar{f}(T^{l})\succ'_{\alpha_1} f(x)\bigr)\prec'_{\alpha_2} \bar{f}(T^{r})\Bigr)\prec'_\omega\bar{f}(U)\quad(\text{by Eq.~(\mref{eq:ddf1})})\\
=& \ \bar{f}(T^{l}\Ve x,\alpha_1,\alpha_2; T^{r})\prec'_\omega \bar{f}(U)\quad(\text{by Eq.~(\mref{eq:induc})})\\
=& \ \bar{f}(T)\prec'_\omega\bar{f}(U).
\end{align*}

{\noindent \bf Case 4:} $T^{l} = |$ and $T^{r} = |$. Then $T = \stree x$ for some $x\in X$ and we have
\begin{align*}
\bar f(T\prec_\omega U)&=\bar f\left(|\vee_{x,(1,\omega)}U\right)\\
&=f(x)\prec'_\omega\bar f(U)\\
&= \bar f(T)\prec'_\omega\bar f(U)
\end{align*}
according to \eqref{eq:induc}.

\noindent This completes the proof.
\end{proof}

\section{Free tridendriform family algebras}
\mlabel{sec:tri}
In this section, we construct free tridendriform family algebras in terms of typed valently decorated Schr\"oder trees.
For this, let us first recall the construction of free tridendriform algebras. See~\cite{LoRo98,LoRo04} for more details.

\subsection{Free tridendriform algebras}\mlabel{sub:trid}
Let $X$ be a set. For $n\geq 0,$
let $T_{n,\,X}$ be the set of planar rooted trees with $n+1$ leaves
and with vertices valently decorated by elements of $X$, in the sense that if a vertex has valence $k$, then the vertex is decorated by an element in $X^{k-1}$. For example, the vertex of $\stree x$ is decorated by $x\in X$ while the vertex of
$\XX{\xx002 \xxh0023{x\ }{0.5} \xxh0012{\ \,y}{0.4}}$ is decorated by $(x,y)\in X^{2}$.
Here are the first few of them.
\begin{align*}
T_{0,\,X}=& \ \{|\},\qquad T_{1,\,X}=\left\{\stree x\Bigm|x\in X\right\},
\qquad T_{2,\,X}=\left\{
\XX{\xxr{-5}5
\xxhu00x \xxhu{-5}5y
}, \,
\XX{\xxl55
\xxhu00x \xxhu55y
}, \,
\XX{\xx002
\xxh0023{x\ }{0.5} \xxh0012{\ \,y}{0.4}
}\Bigm|x,y\in X
\right\},\\
T_{3,\,X}=& \ \left\{
\XX[scale=1.6]{\xxr{-4}4\xxr{-7.5}{7.5}
\xxhu00x \xxhu[0.1]{-4}4{\,y} \xxhu[0.1]{-7.5}{7.5}{z}
},
\XX[scale=1.6]{\xxr{-5}5\xxl{-2}8
\xxhu00x
\xxhu[0.1]{-5}5{y} \xxhu[0.1]{-2}8{z}
},
\XX[scale=1.6]{\xxr{-4}4\xx{-4}42
\xxhu00x \xxh{-4}423{y\ \,}{0.3} \xxh{-4}412{\ \, z}{0.3}
},
\XX[scale=1.6]{\xx00{1.6}\xx00{2.4}
\xxh001{1.6}{\ \ \,z}{0.6}
\xxh00{1.6}{2.4}{y}{0.5}
\xxh00{2.4}3{x\ \ }{0.6}
},
\XX[scale=1.6]{\xxr{-6}6\xxl66
\xxhu00{x} \xxhu66{z} \xxhu[0.1]{-6}6{y}
},
\XX[scale=1.6]{\xxlr0{7.5} \draw(0,0)--(0,0.75);
\xxh0023{x\ \,}{0.3} \xxhu[0.12]0{7.5}{z}
\xxh0012{\ \ y}{0.22}
},\ldots\Bigg|\,x,y,z\in X
\right\}.
\end{align*}

For $T^{(i)}\in T_{n_i,\,X}, 0\leq i\leq k$, and $x_1,\ldots,x_k\in X,$ the grafting $\bigvee$ of $T^{(i)}$ over $(x_1,\ldots,x_k)$ is
$$\bigvee\nolimits_{x_1,\ldots,x_k}^{k+1}(T^{(0)},\ldots,T^{(k)}).$$
Any planar rooted tree $T$ can be uniquely expressed as such a grafting of lower depth trees.
For example,
$$\XX{\xx002 \xxh0023{x\ }{0.5} \xxh0012{\ \,y}{0.4}}=\bigvee\nolimits_{x,y}\,(|,\,|,\,|).$$

Let $\mathrm{DT}(X):=\underset{n\geq 1}\bigoplus\,\bfk T_{n,\,X}.$ Define binary operations $\prec,\succ$ and $\cdot$ on $\mathrm{DT}(X)$ recursively on $\dep(T)+\dep(U)$ by
\begin{enumerate}
\item $|\succ T:=T\prec |:=T,\, |\prec T:=T\succ |:=0 \text{ and }\, |\cdot T:=T\cdot |:=0$ for $T\in T_{n,\,X}$ with $n\geq 1.$
\item For $\bigvee_{x_1,\ldots,x_m}^{m+1}(T^{(0)},\ldots,T^{(m)})$ and $\bigvee_{y_1,\ldots,y_n}^{n+1}(U^{(0)},\ldots,U^{(n)}),$ define
\begin{align*}
T\prec U:=& \ \bigvee\nolimits_{x_1,\ldots,x_{m-1},x_m}^{m+1}(T^{(0)},\ldots,T^{(m-1)},T^{(m)}\succ U+T^{(m)}\prec U+T^{(m)}\cdot U),\\
T\succ U:=& \ \bigvee\nolimits_{y_1,y_2,\ldots,y_n}^{n+1}(T\succ U^{(0)}+T\prec U^{(0)}+T\cdot U^{(0)},U^{(1)},\ldots,U^{(n)}),\\
T\cdot U:=& \ \bigvee\nolimits_{x_1,\ldots,x_{m-1},x_m,\,y_1,\ldots,y_n}^{m+n+1}(
T^{(0)},\ldots,T^{(m-1)},T^{(m)}\succ U^{(0)}+T^{(m)}\prec U^{(0)}+T^{(m)}\cdot U^{(0)},U^{(1)},\ldots,U^{(n)}).
\end{align*}
\end{enumerate}

Let $j:X\ra\mathrm{DT}(X)$ be the map defined by $j(x)=\stree x$ for any $x\in X$. The following result is well-known.
\begin{theorem}\cite{LoRo98,LoRo04}
Let $X$ be a set. Then $(\mathrm{DT}(X),\prec,\succ,\cdot)$, together with $j$, is the free tridendriform algebra on $X.$
\end{theorem}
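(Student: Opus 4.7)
The plan is to adapt to this setting the three-step strategy used for Theorem~\mref{thm:free1} (the dendriform family case), simplified by the absence of a semigroup indexing and complicated by the additional operation $\cdot$ together with the seven axioms of Definition~\mref{defn:td}. The three steps are: verify that $(\mathrm{DT}(X),\prec,\succ,\cdot)$ is a tridendriform algebra, show it is generated by $\{\stree x\mid x\in X\}$, and establish the universal property.

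For the axiom check, I would proceed by induction on $\dep(T)+\dep(U)+\dep(W)$, with $T=\bigvee^{m+1}_{x_1,\ldots,x_m}(T^{(0)},\ldots,T^{(m)})$ and similar expansions for $U$ and $W$. Unfolding the recursive definitions of $\prec$, $\succ$ and $\cdot$ reduces each axiom to an equation on the inner factors, where the induction hypothesis applies. The three ``binary'' axioms mirror the dendriform case, with an extra $\cdot$-summand inserted in the obvious slot; the four axioms mixing $\cdot$ with $\prec$ or $\succ$ are handled similarly, with care taken for the fact that $\cdot$ concatenates the lists of decorated vertices at the root rather than creating a new nesting level. The base case $\dep(T)+\dep(U)+\dep(W)=3$ handles the trees $\stree x, \stree y, \stree z$ and reduces to direct grafting identities.

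For the generation step, I would induct on $\dep(T)$: the base case is $T=\stree x$; in the induction step one writes $T=\bigvee^{m+1}_{x_1,\ldots,x_m}(T^{(0)},\ldots,T^{(m)})$ and inverts the recursive definitions to express $T$ as a combination of the three operations applied to $\stree{x_i}$ and the strictly smaller-depth $T^{(j)}$'s, with cases depending on which $T^{(i)}$ are equal to $|$. For the universal property, uniqueness of $\bar f\colon\mathrm{DT}(X)\to D'$ extending $f\colon X\to D'$ follows from generation. Existence is obtained by defining $\bar f$ recursively on depth via the reconstruction from the generation step, with $\bar f(\stree x):=f(x)$, then checking $\bar f(T\star U)=\bar f(T)\star'\bar f(U)$ for each $\star\in\{\prec,\succ,\cdot\}$ by induction on $\dep(T)+\dep(U)$, using the tridendriform axioms in $D'$ to re-associate.

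The main obstacle is the bookkeeping in the axiom check, particularly for the $\cdot$-axioms such as $(T\prec U)\cdot W=T\cdot(U\succ W)$ and $(T\cdot U)\prec W=T\cdot(U\prec W)$: because $\cdot$ merges the two top-level lists of decorated vertices, a subsequent $\prec$ or $\succ$ may land on a factor that originally belonged to a different operand, and one must carefully align both sides of the identity before invoking the induction hypothesis. The rest of the calculation is lengthy but mechanical, following the template already displayed for the dendriform family case.
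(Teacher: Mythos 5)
The paper does not actually prove this statement; it is quoted from Loday--Ronco, and the relevant comparison is with the paper's proofs of the family generalizations, Proposition~\mref{prop:trid} and Theorem~\mref{thm:free2}, which specialize to exactly this theorem when $\Omega$ is the one-element semigroup (cf.\ the remark after Definition~\mref{defn:tdtree1}). Your outline is essentially that specialization and is sound: axiom verification by induction on $\dep(T)+\dep(U)+\dep(W)$, generation by induction on depth, and the universal property via a recursively defined $\bar f$ whose uniqueness follows from generation. Two points where the paper's argument supplies what your sketch leaves implicit. First, in the tridendriform setting the depth-one trees are not only the one-vertex trees but arbitrary corollas $\bigvee_{x_1,\ldots,x_m}(|,\ldots,|)$, so the base cases of both the axiom check and the generation step must treat these; the generation base case is exactly the identity expressing a corolla as the $\cdot$-product of one-vertex trees. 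Second, the bookkeeping obstacle you flag for the $\cdot$-axioms and for the well-definedness of $\bar f$ is resolved in the paper by a secondary induction on the breadth $\bre(T)$ together with the canonical factorization of any Schr\"oder tree whose root vertex has arity $m+1$ as a $\cdot$-product of $m$ breadth-two trees (Eq.~\eqref{central-product-decomp}); a breadth-two tree is then one of the four shapes obtained by grafting at a single decorated vertex, and the general case is recovered by iterating the axiom $(x\cdot y)\prec z=x\cdot(y\prec z)$. With the breadth induction and this factorization made explicit, your plan coincides with the paper's argument for the family case and yields the stated theorem.
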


\subsection{Free tridendriform family algebras}
In this subsection, we introduce typed valently decorated Schr\"oder trees
to construct free tridendriform family algebras. Let us first propose

\begin{defn}
Let $X$  and $\Omega$ be two sets. A {\bf $X$-valently decorated $\Omega$-typed (abbreviated typed valently decorated) Schr\"oder tree}
is a triple $T = (T,\dec, \type)$, where
\begin{enumerate}
\item $T$ is a Schr\"oder tree.
\item $\dec: IV(T)\ra \sqcup_{n\geq 1} X^n$ is a map sending any vertex of arity $n+1$ to $X^n$.
\item $\type: IE(T)\ra \Omega$ is a map.
\end{enumerate}
\mlabel{defn:tdtree1}
\end{defn}

\begin{remark}
The trees in $\mathrm{DT(X)}$ employed to construct free tridendriform algebras
are indeed $X$-valently decorated \{$\star$\}-typed Schr\"oder trees, where \{$\star$\} is any set with one single element.
\end{remark}

Let $X$ be a set and let $\Omega$ be a semigroup.
For $n\geq 0,$ let $T_n:=T_{n,\,X,\,\Omega}$ be the set of $X$-valently
decorated $\Omega^{1}$-typed Schr\"oder trees with $n+1$ leaves, such that
the leaves are decorated by the identity 1 in $\Omega^{1}$ and internal edges are decorated by elements of $\Omega$.
Let us expose some examples. Note that the edge decoration 1 is omitted for convenience.
\begin{align*}
T_0=& \ \{|\},\qquad
T_1=\left\{\stree x\Bigm|x\in X\right\},
\qquad T_2=\left\{
\XX{\xxr{-5}5
\node at (-0.4,0) {$\alpha$};
\xxhu00x \xxhu{-5}5y
},
\XX{\xxl55
\node at (0.4,0) {$\alpha$};
\xxhu00x \xxhu55y
},
\XX{\xx002
\xxh0023{x\ }{0.5} \xxh0012{\ \,y}{0.4}
}\Bigm|x,y\in X,\alpha\in\Omega
\right\},\\
T_3=& \ \left\{
\XX[scale=1.6]{\xxr{-4}4\xxr{-7.5}{7.5}
\node at (-0.3,0) {$\alpha$};
\node at (-0.72,0.4) {$\beta$};
\xxhu00x \xxhu[0.1]{-4}4{\,y} \xxhu[0.1]{-7.5}{7.5}{z}
},
\XX[scale=1.6]{\xxr{-5}5\xxl{-2}8
\node at (-0.4,0.1) {$\alpha$};
\node at (-0.2,0.55) {\tiny $\beta$};
\xxhu00x
\xxhu[0.1]{-5}5{y} \xxhu[0.1]{-2}8{z}
},
\XX[scale=1.6]{\xxr{-4}4\xx{-4}42
\node at (-0.3,0) {$\alpha$};
\xxhu00x \xxh{-4}423{y\ \,}{0.3} \xxh{-4}412{\ \, z}{0.3}
},
\XX[scale=1.6]{\xx00{1.6}\xx00{2.4}
\xxh001{1.6}{\ \ \,z}{0.6}
\xxh00{1.6}{2.4}{y}{0.5}
\xxh00{2.4}3{x\ \ }{0.6}
},
\XX[scale=1.6]{\xxlr0{7.5} \draw(0,0)--(0,0.75);
\xxh0023{x\ \,}{0.3} \xxhu[0.12]0{7.5}{z}
\node at (0.16,0.35) {\tiny $y$};
\node at (0.17,0.62) {\tiny $\alpha$};
},
\ldots\Bigg|\,x,y,z\in X,\alpha,\beta\in\Omega
\right\}.
\end{align*}
Denote by $$\DTF:=\bigoplus_{n\geq 1}\bfk T_{n}.$$
For typed valently decorated Schr\"oder trees $T^{(i)}\in T_{n_i}, 0\leq i\leq k,$ and $x_1,\ldots,x_k\in X, \alpha_0,\ldots,\alpha_k\in\Omega^{1},$ the grafting $\bigvee$ of $T^{(i)}$ over $(x_1,\ldots,x_k)$ and $(\alpha_0,\ldots,\alpha_k)$ is
\begin{equation}
T=\bigv x_1,\dots,x_k;k+1;\alpha_0,\dots,\alpha_k;
(T^{(0)},\dots,T^{(k)})
\mlabel{eq:texpr}
\end{equation}
obtained by joining the $k+1$ roots of $T^{(i)}$ to a new vertex valently decorated by $(x_1,\ldots,x_k)$ and
decorating the new edges from left to right by $\alpha_0,\ldots,\alpha_k$ respectively. 

\begin{exam}
Let $T^{(0)}=\stree x, T^{(1)}=\stree y$, $T^{(2)}=\stree z,$ $u,v\in X$ and $\alpha,\beta,\gamma\in\Omega.$ Then
$$
\bigv u,v;3;\alpha,\beta,\gamma;\biggl(\stree x,\stree y,\stree z\biggr)
=\XX[scale=1.6]{
\xxlr0{7.5}\xxl77 \xxr{-7}7\draw(0,0)--(0,0.75);
\xxhu77{z}\xxhu{-7}7{x}\xxhu[0.1]0{7.5}{y}
\xxh0023{u\ \,}{0.3}
\node at (0.14,0.35) {$v$};
\node at (0.18,0.65) {\tiny $\beta$};
\node at (-0.6,0.25) {$\alpha$};
\node at (0.6,0.25) {$\gamma$};
}.$$
\end{exam}

Any typed valently decorated Schr\"oder tree $T\in \DTF$ can be uniquely expressed as such a grafting of lower depth typed valently decorated Schr\"oder trees in Eq.~(\mref{eq:texpr}),
and we define the breadth of $T$ as the arity of the internal vertex adjacent to the root, i.e. $\bre(T) := k+1$. For example,
\begin{align*}\XX{\xx002
\xxh0023{x\ }{0.5} \xxh0012{\ \,y}{0.4}
}=&\, \bigv x,y;3;1,1,1;(|,|,|);,\quad  \bre\biggl(\XX{\xx002
\xxh0023{x\ }{0.5} \xxh0012{\ \,y}{0.4}
}\biggr) = 3,\, \text{ and }\,\\
T =& \XX[scale=1.6]{\xxlr0{7.5}\xxl77 \draw(0,0)--(0,0.75);
\xxh0023{x\ \,}{0.3} \xxhu[0.1]0{7.5}{y} \xxhu77{u}
\node at (0.15,0.38) {$z$};
\node at (0.17,0.65) {$\alpha$};
\node at (0.55,0.25) {$\beta$};
}=\bigv x,z;3;1,\alpha,\beta;\biggl(|,\stree y,\stree u\biggr), \quad \bre(T) = 3.
\end{align*}

\begin{defn}
Let $X$ be a set and let $\Omega$ be a semigroup.
 Define binary operations $\{\prec_\omega,\succ_\omega\mid\omega\in\Omega\}$ and $\cdot$ on $\DTF$ recursively on $\dep(T)+\dep(U)$ by
\begin{enumerate}
\item $|\succ_\omega T:=T\prec_\omega |:=T,\, |\prec_\omega T:=T\succ_\omega |:=0 \text{ and }\, |\cdot T:=T\cdot |:=0$ for $\omega\in \Omega$ and $T\in T_{n}$ with $n\geq 1.$\mlabel{it:trida}

\item Let
\begin{align*}
T=& \ \bigv x_1,\ldots,x_m;m+1;\alpha_0,\ldots,\alpha_m;(T^{(0)},\ldots, T^{(m)})\in T_{m},\\
U=& \ \bigv y_1,\ldots,y_n;n+1;\beta_0,\ldots,\beta_n;(U^{(0)},\ldots, U^{(n)})\in T_{n},
\end{align*}
define
\begin{align}
 T\prec_\omega U:=& \ \bigv x_1,\ldots,x_{m-1},x_m;m+1;\alpha_0,\ldots,\alpha_{m-1},\alpha_m\omega; (T^{(0)},\ldots,T^{(m-1)},T^{(m)}\succ_{\alpha_m} U+T^{(m)}\prec_\omega U+T^{(m)}\cdot U),\mlabel{eq:tdpre}\\
T\succ_\omega U:=& \ \bigv y_1,y_2,\ldots,y_n;n+1;\omega\beta_0,\beta_1,\ldots,\beta_n;  (T\succ_\omega U^{(0)}+T\prec_{\beta_0} U^{(0)}+T\cdot U^{(0)},U^{(1)},\ldots,U^{(n)}),\mlabel{eq:tdsuc}\\
 T\cdot U:=& \ \bigv x_1,\ldots,x_{m-1},x_m,y_1,\ldots,y_n;m+n+1;\alpha_0,\ldots,\alpha_{m-1},\alpha_m\beta_0,\beta_1,
 \ldots,\beta_n;
 (T^{(0)},\ldots,T^{(m-1)},T^{(m)}\succ_{\alpha_m} U^{(0)}+T^{(m)}\prec_{\beta_0} U^{(0)}+T^{(m)}\cdot U^{(0)},\mlabel{eq:tdcdot}\\
 & \ U^{(1)},\ldots,U^{(n)}).\nonumber
\end{align}
\end{enumerate}
\mlabel{defn:trirec}
\end{defn}
\noindent Here in Eq.~(\mref{eq:tdcdot}) when $T^{(m)} = | = U^{(0)}$, we employ the convention that
\begin{equation}
|\prec_1 |+|\succ_1 |+|\cdot | :=|,
\mlabel{eq:1star1}
\end{equation}
and
\begin{equation}
|\succ_1 T:=T\prec_1 |:=T\,\text{ and }\, |\prec_1 T:=T\succ_1 |:=0.
\mlabel{eq:con}
\end{equation}

\noindent Let us expose some examples for better understanding.

\begin{exam}
Let $T=\stree x$ and $U=\stree y$ with $x,y\in X$. For $\omega\in\Omega$,
\begin{align*}
\stree x\prec_\omega \stree y
=& \ \bigv x;2;1,1;(|,|)\prec_\omega \stree y\\
=& \ \bigv x;2;1,\omega;\biggl(|,|\prec_\omega \stree y+|\succ_1\stree y+|\cdot\stree y\biggr)\quad(\text{by Eq.~(\mref{eq:tdpre})})\\
=& \ \bigv x;2;1,\omega;\biggl(|,\stree y\biggr)\quad(\text{by Item~\mref{it:trida} in Definition~\mref{defn:trirec} and Eq.~(\mref{eq:con})})                 \\
=& \ \XX{\xxl55
\node at (0.4,0) {$\omega$};
\xxhu00x \xxhu55y
},\\
\stree x\succ_\omega \stree y
=& \ \stree x\succ_\omega \bigv y;2;1,1;(|,|)\\
=& \ \bigv y;2;\omega,1;\biggl(\stree x\prec_1 |+\stree x\succ_\omega |+\stree x\cdot |,|\biggr)\quad(\text{by Eq.~(\mref{eq:tdsuc})})\\
=& \ \bigv y;2;\omega,1;\biggl(\stree x,|\biggr)\quad(\text{by Item~\mref{it:trida} in Definition~\mref{defn:trirec} and Eq.~(\mref{eq:con})})                 \\\\
=& \ \XX{\xxr{-5}5
\node at (-0.45,0) {$\omega$};
\xxhu00y \xxhu{-5}5{x}
},\\
\stree x\cdot \stree y=& \ \bigv x;2;1,1;(|,|)\cdot\bigv y;2;1,1;(|,|)\\
=& \ \bigv x,y;3;1,1,1;(|,|\prec_1 |+|\succ_1|+|\cdot |,|)\quad(\text{by Eq.~(\mref{eq:tdcdot})})\\
=& \ \bigv x,y;3;1,1,1;(|,|,|)\quad(\text{by Eq.~(\mref{eq:1star1})})\\
=& \ \XX{\xx002
\xxh0023{x\ }{0.5} \xxh0012{\ \,y}{0.4}
}.
\end{align*}
\mlabel{exam:tuu}
\end{exam}

\begin{exam}
Let $T=\XX{\xxr{-5}5
\node at (-0.4,0) {$\alpha$};
\xxhu00x \xxhu{-5}5y
}, U=\stree z$ with $x,y,z\in X$ and $\alpha\in\Omega$. For $\beta\in\Omega$,
\begin{align*}
T\succ_\beta U=& \ \XX{\xxr{-5}5
\node at (-0.4,0) {$\alpha$};
\xxhu00x \xxhu{-5}5y
}\succ_\beta \bigv z;2;1,1;(|,|)
=\bigv z;2;\beta,1;\biggl(\XX{\xxr{-5}5
\node at (-0.4,0) {$\alpha$};
\xxhu00x \xxhu{-5}5y
}\succ_\beta |+\XX{\xxr{-5}5
\node at (-0.4,0) {$\alpha$};
\xxhu00x \xxhu{-5}5y
}\prec_1 |+\XX{\xxr{-5}5
\node at (-0.4,0) {$\alpha$};
\xxhu00x \xxhu{-5}5y
}\cdot |,|\biggr)\\
=& \ \bigv z;2;\beta,1;\biggl(\XX{\xxr{-5}5
\node at (-0.4,0) {$\alpha$};
\xxhu00x \xxhu{-5}5y
},|\biggr)=\XX[scale=1.6]{\xxr{-4}4\xxr{-7.5}{7.5}
\node at (-0.35,0) {$\beta$};
\node at (-0.7,0.45) {$\alpha$};
\xxhu00{z} \xxhu[0.1]{-4}4{x} \xxhu[0.1]{-7.5}{7.5}{y}
}.\\
T\prec_\beta U=& \ \bigv x;2;\alpha,1;\biggl(\stree y,|\biggr)\prec_\beta \stree z=\bigv x;2;\alpha,\beta;\biggl(\stree y,|\prec_\beta \stree z+|\succ_1 \stree z+|\cdot \stree z\biggr)\\
=& \ \bigv x;2;\alpha,\beta;\biggl(\stree y,\stree z\biggr)=\XX[scale=1.6]{\xxr{-6}6\xxl66
\node at (-0.5,0.15) {$\alpha$};
\node at (0.5,0.15) {$\beta$};
\xxhu00{x} \xxhu66{z} \xxhu[0.1]{-6}6{y}
}.\\
U\cdot T=& \ \bigv z;2;1,1;(|,|)\cdot \bigv x;2;\alpha,1;\biggl(\stree y,|\biggr)
=\bigv z,x;3;1,\alpha,1;\biggl(|,|\succ_1\stree y+|\prec_\alpha \stree y+|\cdot \stree y,|\biggr)\\
=& \ \bigv z,x;3;1,\alpha,1;\biggl(|,\stree y,|\biggr)=\XX[scale=1.6]{
\xxlr0{7.5} \draw(0,0)--(0,0.75);
\xxh0023{z\ \,}{0.3} \xxhu[0.12]0{7.5}{y}
\node at (0.16,0.38) {$x$};
\node at (0.17,0.64) {$\alpha$};
}.
\end{align*}
\end{exam}

\begin{prop}
Let $X$ be a set and let $\Omega$ be a semigroup.
Then $(\DTF,\,\{\prec_\omega,\succ_\omega\mid \omega\in \Omega\},\cdot)$ is a tridendriform family algebra, generated by  $\biggl\{\stree x\Bigm|x\in X\biggr\}.$
\mlabel{prop:trid}
\end{prop}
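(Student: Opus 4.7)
The plan is to mirror the two-step strategy of Proposition~\mref{prop:dend}, adapted to the seven tridendriform family axioms and the additional associative product $\cdot$.

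\textbf{Step 1: Verifying the seven axioms.} I will prove the identities (\mref{eq:tdf1})--(\mref{eq:tdf7}) simultaneously by induction on $\dep(T)+\dep(U)+\dep(W)\geq 3$. The base case forces $T=\stree x$, $U=\stree y$, $W=\stree z$; each of the seven axioms then reduces to a direct calculation based on Example~\mref{exam:tuu} together with the conventions (\mref{eq:1star1}) and (\mref{eq:con}). For the inductive step, expand each side of each axiom via the recursive formulas (\mref{eq:tdpre})--(\mref{eq:tdcdot}); both sides rewrite as graftings over the same valent vertex decoration and the same edge decorations at the root, so the remaining equality reduces to an identity among the modified subtrees. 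These identities follow from the induction hypothesis applied to the strictly shallower $T^{(i)}$, $U^{(j)}$, $W^{(k)}$, combined with the already-established shallower instances of (\mref{eq:tdf1})--(\mref{eq:tdf7}).

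\textbf{Step 2: Generation.} I will show that every $T\in\DTF$ lies in the subalgebra generated by $\{\stree x\mid x\in X\}$ by induction on the total number of leaves of $T$. The base case $T=\stree x$ is trivial. For the inductive step, write $T=\bigv x_1,\dots,x_m;m+1;\alpha_0,\dots,\alpha_m;(T^{(0)},\dots,T^{(m)})$. If all $T^{(i)}=|$ (so $T$ is a corolla with $m\geq 2$), a short secondary induction on $m$ using (\mref{eq:tdcdot}) and (\mref{eq:1star1}) shows that $T=\stree{x_1}\cdot\stree{x_2}\cdots\stree{x_m}$. Otherwise, pick some $j$ with $T^{(j)}\neq|$ and recover $T$ as $T^{(0)}\succ_{\alpha_0}S$ when $j=0$, as $S\prec_{\alpha_m}T^{(m)}$ when $j=m$, or as $L\cdot R$ when $0<j<m$, where $S$, $L$, $R$ are strictly smaller trees obtained by excising $T^{(j)}$ from $T$ and reassembling via the appropriate recursion clause. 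Each reconstruction is verified by a direct expansion via (\mref{eq:tdpre})--(\mref{eq:tdcdot}), using the conventions (\mref{eq:1star1}), (\mref{eq:con}) and Item~(\mref{it:trida}) to collapse terms involving $|$; each factor has strictly fewer leaves than $T$, so the induction hypothesis applies. Throughout one uses the key observation that whenever $T^{(i)}\neq|$ the decoration $\alpha_i$ necessarily lies in $\Omega$, since only leaves carry the decoration $1$, so every operation we apply has a parameter in $\Omega$.

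The main obstacle is the bookkeeping in Step~1: the four $\cdot$-involving axioms (\mref{eq:tdf4})--(\mref{eq:tdf7}) produce several intermediate terms after the recursive substitutions, and one must simultaneously match the valent decorations of the root vertex and the semigroup decorations of its outgoing edges on the two sides of each axiom. The clean observation that keeps this manageable is that $\prec_\omega$, $\succ_\omega$, and $\cdot$ each modify only the leftmost and/or rightmost branch of a grafting; the overall grafting shape on each side of every axiom is therefore dictated by the root, and each identity reduces to a single relation among the modified subtrees---exactly the pattern needed to invoke the induction hypothesis.
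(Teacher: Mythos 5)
Your Step 1 follows the paper's proof (induction on $\dep(T)+\dep(U)+\dep(W)$, expansion by Eqs.~(\mref{eq:tdpre})--(\mref{eq:tdcdot}), matching the root decorations, induction hypothesis on the modified subtrees), but the base case is misidentified, and as written this is a genuine gap. For Schr\"oder trees, depth one does not force $T=\stree{x}$: a depth-one element of $\DTF$ is a corolla $\ssstree{x_1}{x_m}$ whose single internal vertex carries an arbitrary tuple $(x_1,\ldots,x_m)$. Hence the initial step $\dep(T)+\dep(U)+\dep(W)=3$ consists of all triples of corollas, your base-case computation covers only those of breadth two, and your inductive step (total depth at least $4$) never reaches the remaining triples. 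This is exactly where the tridendriform setting departs from the dendriform one; the paper performs the initial step for general corollas $\ssstree{x_1}{x_m}$, $\ssstree{y_1}{y_n}$, $\ssstree{z_1}{z_\ell}$, collapsing the terms containing $|$ via Item~\mref{it:trida} of Definition~\mref{defn:trirec} and Eqs.~(\mref{eq:1star1}) and (\mref{eq:con}). The repair is routine---the same computation goes through with the extra decorations carried along---but it has to be done; your induction is incomplete without it.

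Your Step 2 is correct and takes a genuinely different route: you induct on the number of leaves and split off a single non-trivial subtree, whereas the paper inducts on $\dep(T)$ with a secondary induction on $\bre(T)$ and factors $T$ as a $\cdot$-product of breadth-two graftings (its Cases 4--7). Your version is leaner, and your remark that an internal edge to a subtree $T^{(i)}\neq |$ is necessarily typed by an element of $\Omega$ is precisely what legitimises the operations used. One wording needs correcting: for $0<j<m$ you cannot excise $T^{(j)}$ from both factors; one of them must retain it, e.g. $T=L\cdot R$ with $L=\bigv x_1,\ldots,x_j;j+1;\alpha_0,\ldots,\alpha_j;(T^{(0)},\ldots,T^{(j)})$ and $R=\bigv x_{j+1},\ldots,x_m;m-j+1;1,\alpha_{j+1},\ldots,\alpha_m;(|,T^{(j+1)},\ldots,T^{(m)})$, since Eq.~(\mref{eq:tdcdot}) together with the conventions recreates the merged subtree as $T^{(j)}\succ_{\alpha_j}|+T^{(j)}\prec_1|+T^{(j)}\cdot|=T^{(j)}$; both factors still have strictly fewer leaves than $T$, so your induction applies.
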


\begin{proof}
We divide into two steps to prove the result.

{\noindent \bf Step 1:} we first prove $(\DTF, \{\prec_\omega,\succ_\omega\mid \omega\in \Omega\},\cdot)$ is a tridendriform family algebra. Let
\begin{align*}
T=& \ \bigv x_1,\ldots,x_m;m+1;\alpha_0,\ldots,\alpha_m;(T^{(0)},\ldots,T^{(m)}),\\
U=& \ \bigv y_1,\ldots,y_n;n+1;\beta_0,\ldots,\beta_n;(U^{(0)},\ldots,U^{(n)})\text{ and }\\
W=& \ \bigv z_1,\ldots,z_\ell;\ell+1;\gamma_0,\ldots,\gamma_\ell;(W^{(0)},\ldots,W^{(\ell)})
\end{align*}
be in $\DTF.$ Then we apply induction on $\dep(T)+\dep(U)+\dep(W)\geq 3.$ For the initial step of $\dep(T)+\dep(U)+\dep(W)=3$,
we have $T=\ssstree{x_1}{x_m}, U=\ssstree{y_1}{y_n}$ and $W=\ssstree{z_1}{z_\ell}$ for some $x_1,\ldots x_m,y_1,\ldots,y_n,z_1,\ldots,z_\ell\in X$. Then
{\small
\begin{align*}
&(T\prec_\beta U)\prec_\gamma W\\
=& \ \Biggl(\ssstree{x_1}{x_m}\prec_\beta\ssstree{y_1}{y_n}\Biggr)\prec_\gamma \ssstree{z_1}{z_\ell}\\
=& \ \Biggl(\bigv x_1,\ldots,x_m;m+1;1,\ldots,1;(|,\ldots,|)\prec_\beta\ssstree{y_1}{y_n}\Biggr)\prec_\gamma \ssstree{z_1}{z_\ell}\\
=& \ \Biggl(\bigv x_1,\ldots,x_{m-1},x_m;m+1;1,\ldots,1,\beta;\Biggl(|,\ldots,|,|\prec_\beta
\ssstree{y_1}{y_n}+|\succ_1\ssstree{y_1}{y_n}
+|\cdot\ssstree{y_1}{y_n}\Biggr)\Biggr)\prec_\gamma \ssstree{z_1}{z_\ell}\\
&\hspace{5cm}(\text{by Eq.~(\mref{eq:tdpre})})\\
=& \ \bigv x_1,\ldots,x_{m-1},x_m;m+1;1,\ldots,1,\beta;\Biggl(|,\ldots,|,
\ssstree{y_1}{y_n}\Biggr)\prec_\gamma\ssstree{z_1}{z_\ell}\\
&\quad(\text{by Item~\mref{it:trida} in Definition~\mref{defn:trirec} and Eq.~(\mref{eq:con})})\\
=& \ \bigv x_1,\ldots,x_{m-1},x_m;m+1;1,\ldots,1,\beta\gamma;\Biggl(|,\ldots,|,\ssstree{y_1}{y_n}\prec_\gamma \ssstree{z_1}{z_\ell}+\ssstree{y_1}{y_n}\succ_\beta\ssstree{z_1}{z_\ell}+\ssstree{y_1}{y_n}
\cdot\ssstree{z_1}{z_\ell}\Biggr)\\
&\hspace{3cm}(\text{by Eq.~(\mref{eq:tdpre})})\\
=& \ \bigv x_1,\ldots,x_{m-1},x_m;m+1;1,\ldots,1,\beta\gamma;\Biggl(|,\ldots,|,|\prec_{\beta\gamma}
\Biggl(\ssstree{y_1}{y_n}\prec_\gamma \ssstree{z_1}{z_\ell}+\ssstree{y_1}{y_n}\succ_\beta\ssstree{z_1}{z_\ell}+\ssstree{y_1}{y_n}\cdot\ssstree{z_1}{z_\ell}\Biggr)\\
& \ +|\succ_1\Biggl(\ssstree{y_1}{y_n}\prec_\gamma \ssstree{z_1}{z_\ell}+\ssstree{y_1}{y_n}\succ_\beta\ssstree{z_1}{z_\ell}+\ssstree{y_1}{y_n}\cdot\ssstree{z_1}{z_\ell}\Biggr)\\
& \ +|\cdot\Biggl(\ssstree{y_1}{y_n}\prec_\gamma \ssstree{z_1}{z_\ell}+\ssstree{y_1}{y_n}\succ_\beta\ssstree{z_1}{z_\ell}+\ssstree{y_1}{y_n}\cdot\ssstree{z_1}{z_\ell}\Biggr)\Biggr)\\
&\hspace{3cm}(\text{by Item~\mref{it:trida} in Definition~\mref{defn:trirec} and Eq.~(\mref{eq:con})})\\
=& \ \bigv x_1,\ldots,x_m;m+1;1,\ldots,1;(|,\ldots,|)\prec_{\beta\gamma}\Biggl(\ssstree{y_1}{y_n}\prec_\gamma \ssstree{z_1}{z_\ell}+\ssstree{y_1}{y_n}\succ_\beta\ssstree{z_1}{z_\ell}+\ssstree{y_1}{y_n}\cdot\ssstree{z_1}{z_\ell}\Biggr)\\
&\hspace{3cm}(\text{by Eq.~(\mref{eq:tdpre})})\\
=& \ \ssstree{x_1}{x_m}\prec_{\beta\gamma}\Biggl(\ssstree{y_1}{y_n}\prec_\gamma \ssstree{z_1}{z_\ell}+\ssstree{y_1}{y_n}\succ_\beta\ssstree{z_1}{z_\ell}+\ssstree{y_1}{y_n}\cdot\ssstree{z_1}{z_\ell}\Biggr)\\
=& \ T\prec_{\beta\gamma}(U\prec_\gamma W+U\succ_\beta W+U\cdot W).
\end{align*}}%
The proofs of Eqs.~(\mref{eq:tdf2})-(\mref{eq:tdf7}) are similar.
For the induction step of $\dep(T)+\dep(U)+\dep(W)\geq 4$, we have
\begin{align*}
&(T\prec_\beta U)\prec_\gamma W\\
=& \ \bigv x_1,\ldots,x_{m-1},x_m;m+1;\alpha_0,\ldots,\alpha_{m-1},
\alpha_m\beta;(T^{(0)},\ldots,T^{(m-1)},T^{(m)}\succ_{\alpha_m}U+T^{(m)}\prec_{\beta}U+T^{(m)}\cdot U)\prec_\gamma W\quad(\text{by Eq.~(\mref{eq:tdpre})})\\
=& \ \bigv x_1,\ldots,x_{m-1},x_m;m+1;\alpha_0,\ldots,\alpha_{m-1},\alpha_m\beta\gamma;
\Big(T^{(0)},\ldots,T^{(m-1)},(T^{(m)}
\succ_{\alpha_m}U+T^{(m)}\prec_{\beta}U
+T^{(m)}\cdot U)\prec_\gamma W\\
& \ +(T^{(m)}\succ_{\alpha_m}U+T^{(m)}\prec_{\beta}U
+T^{(m)}\cdot U) \succ_{\alpha_m\beta}W+(T^{(m)}\succ_{\alpha_m}U +T^{(m)}\prec_{\beta}U+T^{(m)}\cdot U)\cdot W\Big)\\
&\hspace{6cm}(\text{by Eq.~(\mref{eq:tdpre})})\\
=& \ \bigv x_1,\ldots,x_{m-1},x_m;m+1;\alpha_0,\ldots,\alpha_{m-1},\alpha_m\beta\gamma;
\Big(T^{(0)},\ldots,T^{(m-1)},(T^{(m)}\succ_{\alpha_m}U)\prec_\gamma W+(T^{(m)}\prec_\beta U)\prec_\gamma W+(T^{(m)}\cdot U)\prec_\gamma W\\
& \ +T^{(m)}\succ_{\alpha_m}(U\succ_\beta W)+(T^{(m)}\succ_{\alpha_m}U+T^{(m)}\prec_{\beta}U+T^{(m)}\cdot U)\cdot W\Big)\\
=& \ \bigv x_1,\ldots,x_{m-1},x_m;m+1;\alpha_0,\ldots,\alpha_{m-1},\alpha_m\beta\gamma;\Big(T^{(0)},
\ldots,T^{(m-1)},T^{(m)}\succ_{\alpha_m}(U\prec_\gamma W)+T^{(m)}\prec_{\beta\gamma}(U\prec_\gamma W+U\succ_\beta W+U\cdot W)\\
& \ +(T^{(m)}\cdot U)\prec_\gamma W+T^{(m)}\succ_{\alpha_m}(U\succ_\beta W)+(T^{(m)}\succ_{\alpha_m} U)\cdot W+(T^{(m)}\prec_\beta U)\cdot W+(T^{(m)}\cdot U)\cdot W\Big)\\
&\hspace{5cm}(\text{by the induction hypothesis})\\
=& \ \bigv x_1,\ldots,x_{m-1},x_m;m+1;\alpha_0,\ldots,\alpha_{m-1},\alpha_m\beta\gamma;
\Big(T^{(0)},\ldots,T^{(m-1)},T^{(m)}\prec_{\beta\gamma}(U\prec_\gamma W+U\succ_\beta W+U\cdot W)+T^{(m)}\succ_{\alpha_m}(U\prec_\gamma W+\\
& \ U\succ_\beta W+U\cdot W)+T^{(m)}\cdot(U\prec_\gamma W+U\succ_\beta W+U\cdot W)\Big)\\
=& \ \bigv x_1,\ldots,x_m;m+1;\alpha_0,\ldots,\alpha_m;(T^{(0)},\ldots,T^{(m)})\prec_{\beta\gamma}(U\prec_\gamma W+U\succ_\beta W+U\cdot W)\quad(\text{by Eq.~(\mref{eq:tdpre})})\\
=& \ T\prec_{\beta\gamma}(U\prec_\gamma W+U\succ_\beta W+U\cdot W).
\end{align*}
The proofs of Eqs.~(\mref{eq:tdf2})-(\mref{eq:tdf7}) are similar and are moved to Appendix.

{\noindent \bf Step 2:}
We show that $\DTF$ is generated by $\biggl\{\stree x\Bigm|x\in X\biggr\}$. We employ induction on $\dep(T)\geq 1$.
For the initial step $\dep(T)=1,$ we have $T=\ssstree{x_1}{x_m}=\stree{x_1}\cdot\stree{x_2}\cdot \ldots \cdot\stree{x_m}$ 
where $\stree{x_i}\in \biggl\{\stree x\Bigm|x\in X\biggr\}$ with $1\leq i\leq m$.
For any induction step $\dep(T)\geq 2$,
we apply a secondary induction on $\bre(T)\geq 2.$

For the initial step $\bre(T)=2$,
since $\dep(T)\geq 2$, we have $T\neq \stree x$ for any $x\in X$
and there are three cases to consider.

{\noindent \bf Case 1:} $T^{(0)}=|$ and $T^{(1)}\neq |$. Then $\alpha_0=1$ and
\begin{align*}
T=& \ \bigv x;2;\alpha_0,\alpha_1;(T^{(0)},T^{(1)})=\bigv x;2;1,\alpha_1;(|,T^{(1)})\\
=& \  \bigv x;2;1,\alpha_1;(|,|\prec_{\alpha_1}T^{(1)}+|\succ_1T^{(1)}+|\cdot T^{(1)})\quad(\text{by Item~\mref{it:trida} in Definition~\mref{defn:trirec} and Eq.~(\mref{eq:con})})\\
=& \ \bigv x;2;1,1;(|,|)\prec_{\alpha_1}T^{(1)}\quad(\text{by Eq.~(\mref{eq:tdpre})})\\
=& \ \stree x\prec_{\alpha_1}T^{(1)}.
\end{align*}
By the induction hypothesis, $T^{(1)}$ is generated by $\biggl\{\stree x\Bigm|x\in X\biggr\}$ and so $T$ is generated by $\biggl\{\stree x\Bigm|x\in X\biggr\}.$

{\noindent \bf Case 2:} \mlabel{subc:1.2} $T^{(0)}\neq |$ and $T^{(1)}=|$. This case is similar to Case 1.

{\noindent \bf Case 3:} $T^{(0)}\neq |$ and $T^{(1)}\neq|$. Then
\begin{align*}
T=& \ \bigv x;2;\alpha_0,\alpha_1;(T^{(0)},T^{(1)})\\
=& \ \bigv x;2;\alpha_0,\alpha_1;(T^{(0)},|\prec_{\alpha_1}T^{(1)}+|\succ_1T^{(1)}
+|\cdot T^{(1)})\quad(\text{by Item~\mref{it:trida} in Definition~\mref{defn:trirec} and Eq.~(\mref{eq:con})})\\
=& \ \bigv x;2;\alpha_0,1;(T^{(0)},|)\prec_{\alpha_1}T^{(1)}\quad(\text{by Eq.~(\mref{eq:tdpre})})\\
= & \ \biggl(\bigv x;2;\alpha_0,1;(T^{(0)}\succ_{\alpha_0}|+T^{(0)}\prec_1|+T^{(0)}\cdot|
,|)\biggr)\prec_{\alpha_1}T^{(1)}\,(\text{by Item~\mref{it:trida} in Definition~\mref{defn:trirec} and Eq.~(\mref{eq:con})})\\
=& \ \biggl(T^{(0)}\succ_{\alpha_0}\bigv x;2;1,1;(|,|)\biggr)\prec_{\alpha_1}T^{(1)}\quad(\text{by Eq.~(\mref{eq:tdsuc})})\\
=& \ \biggl(T^{(0)}\succ_{\alpha_0}\stree x\biggr)\prec_{\alpha_1}T^{(1)}.
\end{align*}
By the induction hypothesis, $T^{(0)}$ and $T^{(1)}$ are generated by $\biggl\{\stree x\Bigm|x\in X\biggr\}$ and so $T$ is generated by $\biggl\{\stree x\Bigm|x\in X\biggr\}.$

Now let us consider the case when $\dep(T)=k+1\geq 2$ and $\bre(T)=m+1\geq 3.$ Then we can write $T=\bigv x_1,\ldots,x_{m};m+1;\alpha_0,\ldots,\alpha_m;(T^{(0)},\ldots,T^{(m)})$, there exist four cases to consider.

{\noindent \bf Case 4:} $T^{(0)}=|$ and $T^{(1)}\neq |$. Then $\alpha_0=1$ and
\begin{align*}
T=& \ \bigv x_1,x_2,\ldots,x_m;m+1;1,\alpha_1,\ldots,\alpha_m;(|,T^{(1)},\ldots,T^{(m)})\\
=& \ \biggl(\bigv x_1;2;1,\alpha_1;(|,T^{(1)})\biggr)\cdot \biggl(\bigv x_2;2;1,\alpha_2;(|,T^{(2)})\biggr)\cdot\ldots\cdot \biggl(\bigv x_m,;2;1,\alpha_m;(|,T^{(m)})\biggr)\\
& \ \quad(\text{by Item~\mref{it:trida} in Definition~\mref{defn:trirec} and Eq.~(\mref{eq:tdcdot})})\\
=& \ \biggl(\stree{x_1}\prec_{\alpha_1}T^{(1)}\biggr)\cdot \biggl(\stree{x_2}\prec_{\alpha_2}T^{(2)}\biggr)\cdot\ldots\cdot\biggl(\stree{x_m}\prec_{\alpha_m}T^{(m)}\biggr).\quad(\text{by Case 1})
\end{align*}
By the induction hypothesis, $T^{(i)}$ with $1\leq i\leq m$ are generated by $\biggl\{\stree x\Bigm|x\in X\biggr\}$ and so $T$ is generated by $\biggl\{\stree x\Bigm|x\in X\biggr\}.$

{\noindent \bf \bf Case 5:} $T^{(0)}\neq |$ and $T^{(1)}=|$. This case is similar to Case 4.

{\noindent \bf Case 6:} $T^{(0)}\neq |$ and $T^{(1)}\neq|$. Then
\begin{align*}
T=& \ \bigv x_1,x_2,\ldots,x_m;m+1;\alpha_0,\alpha_1,\ldots,\alpha_m;
(T^{(0)},T^{(1)},\ldots,T^{(m)})\\
=& \ \biggl(\bigv x_1;2;\alpha_0,\alpha_1;(T^{(0)},T^{(1)})\biggr)\cdot \biggl(\bigv x_2;2;1,\alpha_2;(|,T^{(2)})\biggr)\cdot\ldots\cdot \biggl(\bigv x_m,;2;1,\alpha_m;(|,T^{(m)})\biggr)\\
& \ \quad(\text{by Item~\mref{it:trida} in Definition~\mref{defn:trirec} and Eq.~(\mref{eq:tdcdot})})\\
=& \ \Biggl(\biggl(T^{(0)}\succ_{\alpha_0}\stree {x_1}\biggr)\prec_{\alpha_1}T^{(1)}\Biggr)\cdot \biggl(\stree{x_2}\prec_{\alpha_2}T^{(2)}\biggr)\cdot\ldots\cdot\biggl(\stree {x_m}\prec_{\alpha_m}T^{(m)}\biggr).
\quad(\text{by Case 1 and Case 3})
\end{align*}
By the induction hypothesis, $T^{(i)}$ with $1\leq i\leq m$ are generated by $\biggl\{\stree x\Bigm|x\in X\biggr\}$ and so $T$ is generated by $\biggl\{\stree x\Bigm|x\in X\biggr\}.$

{\noindent \bf Case 7:} $T^{(0)}=|$ and $T^{(1)}=|$. Then
\begin{align*}
T=& \ \bigv x_1,x_2,x_3,\ldots,x_m;m+1;1,1,\alpha_2,\ldots,\alpha_m;(|,|,T^{(2)},\ldots, T^{(m)})\\
=& \ \biggl(\bigv x_1;2;1,1;(|,|)\biggr)\cdot\biggl(\bigv x_2;2;1,\alpha_2;(|,T^{(2)})\biggr)\cdot\ldots\cdot \biggl(\bigv x_m;2;1,\alpha_m;(|,T^{(m)})\biggr)\\
& \ \quad(\text{by Item~\mref{it:trida} in Definition~\mref{defn:trirec} and Eq.~(\mref{eq:tdcdot})})\\
=& \ \stree {x_1}\cdot \biggl(\stree {x_2}\prec_{\alpha_2}T^{(2)}\biggr)\cdot\ldots\cdot\biggl(\stree {x_m}\prec_{\alpha_m}T^{(m)}\biggr).\quad(\text{by Case 1})
\end{align*}
By the induction hypothesis, $T^{(i)}$ with $1\leq i\leq m$ are generated by $\biggl\{\stree x\Bigm|x\in X\biggr\}$ and so is $T$.
This completes the proof.
\end{proof}

\noindent The concept of free tridendriform family algebra is given as usual.
\begin{defn}
 Let $X$ be a set and let $\Omega$ be a semigroup.
 A free tridendriform family algebra on $X$ is a tridendriform family algebra $(T, \{\prec_{\omega},\succ_{\omega}\mid \omega\in \Omega\},  \cdot\, )$ together with the map $j: X\rightarrow T$ that satisfies the following universal property:
 for any tridendriform family algebra $(T', \{\prec'_{\omega},\succ'_{\omega}\mid \omega\in \Omega\},  \cdot'\,)$
and map $f: X\rightarrow T',$ there is a unique tridendriform family algebra morphism $\bar{f}:
T\rightarrow T'$ such that $f=\bar{f}\circ j$. The free tridendriform family algebra is unique up to isomorphism.
\end{defn}
Let $j: X \ra \DTF$ be the map defined by $j(x)=\stree x$ for $x\in X$. Now we arrive at our main result in this section.
\begin{theorem}
Let $X$ be a set and let $\Omega$ be a semigroup. The tridendriform family algebra $(\DTF,\{\prec_\omega,\succ_\omega\mid\omega\in\Omega\},\cdot)$, together with the map $j$, is the free tridendriform family algebra on $X$.
\mlabel{thm:free2}
\end{theorem}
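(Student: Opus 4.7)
The plan is to mirror the proof of Theorem~\ref{thm:free1} for the dendriform case, handling the extra operation $\cdot$ and the variable breadth of Schr\"oder trees. By Proposition~\ref{prop:trid}, we already know that $(\DTF,\{\prec_\omega,\succ_\omega\mid\omega\in\Omega\},\cdot)$ is a tridendriform family algebra and that it is generated by the trees $\stree{x}$ for $x\in X$. Hence uniqueness of a tridendriform family algebra morphism $\bar f:\DTF\to T'$ extending $f:X\to T'$ is automatic, because once the values $\bar f\bigl(\stree{x}\bigr)=f(x)$ are fixed, the requirement that $\bar f$ commute with $\{\prec_\omega,\succ_\omega\mid\omega\in\Omega\}$ and $\cdot$ determines $\bar f$ on every generator-word, hence on all of $\DTF$.

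For existence, I would define $\bar f$ recursively on $\dep(T)\geq 1$, and, for fixed depth, on $\bre(T)\geq 2$, following exactly the case split appearing in Step 2 of the proof of Proposition~\ref{prop:trid}. Write $T=\bigv x_1,\dots,x_m;m+1;\alpha_0,\dots,\alpha_m;(T^{(0)},\dots,T^{(m)})$. When $\dep(T)=1$, $T=\ssstree{x_1}{x_m}$ and we set $\bar f(T):=f(x_1)\cdot' f(x_2)\cdot'\dots\cdot' f(x_m)$, which is unambiguous by the associativity axiom~(\ref{eq:tdf7}). When $\dep(T)\geq 2$ and $\bre(T)=2$, the three subcases of the proof of Proposition~\ref{prop:trid} (whether $T^{(0)}$ or $T^{(1)}$ equals $|$) tell us to set
\[
\bar f(T):=\begin{cases}
f(x)\prec'_{\alpha_1}\bar f(T^{(1)}), & T^{(0)}=|\neq T^{(1)},\\
\bar f(T^{(0)})\succ'_{\alpha_0} f(x), & T^{(0)}\neq|=T^{(1)},\\
\bigl(\bar f(T^{(0)})\succ'_{\alpha_0}f(x)\bigr)\prec'_{\alpha_1}\bar f(T^{(1)}), & T^{(0)}\neq|\neq T^{(1)}.
\end{cases}
\]
When $\bre(T)=m+1\geq 3$, the factorizations obtained in Cases~4--7 of Step~2 of the proof of Proposition~\ref{prop:trid} express $T$ as a $\cdot$-product of breadth-2 trees, and we define $\bar f(T)$ as the corresponding $\cdot'$-product. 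One must check that this definition is independent of how the factorization is parsed; this is exactly the content of the associativity axiom~(\ref{eq:tdf7}) in $T'$.

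Having defined $\bar f$, it remains to verify
\[
\bar f(T\prec_\omega U)=\bar f(T)\prec'_\omega\bar f(U),\quad \bar f(T\succ_\omega U)=\bar f(T)\succ'_\omega\bar f(U),\quad \bar f(T\cdot U)=\bar f(T)\cdot'\bar f(U),
\]
by induction on $\dep(T)+\dep(U)$, with a secondary induction on $\bre(T)+\bre(U)$ when needed. In the base case $T=\stree{x}$, $U=\stree{y}$, the computations of $\stree{x}\prec_\omega\stree{y}$, $\stree{x}\succ_\omega\stree{y}$ and $\stree{x}\cdot\stree{y}$ carried out in Example~\ref{exam:tuu} together with the defining formula for $\bar f$ yield the three identities directly. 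In the inductive step, the recursive formulas (\ref{eq:tdpre})--(\ref{eq:tdcdot}) unfold $T\star U$ (for $\star\in\{\prec_\omega,\succ_\omega,\cdot\}$) into a grafting whose subtrees are lower-depth products; applying $\bar f$ and the inductive hypothesis converts these into expressions in $T'$, and the axioms (\ref{eq:tdf1})--(\ref{eq:tdf7}) in $T'$ then repackage them into $\bar f(T)\star'\bar f(U)$. This is a straightforward but lengthy bookkeeping exercise, essentially dual to the case analysis used in Step~1 of the proof of Proposition~\ref{prop:trid}.

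The main obstacle is the combinatorial bulk: compared to the dendriform proof, one now has seven tridendriform axioms and three operations, and the breadth of a Schr\"oder tree is not bounded by $2$. Consequently the verification that $\bar f$ respects $\cdot$ forces us to handle the factorization of arbitrary-breadth trees into breadth-2 pieces via $\cdot$, and to check that this factorization is compatible with the seven tridendriform identities in $T'$. The key simplification is that every such factorization is controlled by the associativity (\ref{eq:tdf7}) together with the mixed laws (\ref{eq:tdf4})--(\ref{eq:tdf6}), so the inductive step at breadth $m+1\geq 3$ reduces cleanly to the breadth-$2$ case already treated, which is structurally identical to the corresponding step in the proof of Theorem~\ref{thm:free1}.
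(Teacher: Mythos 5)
Your proposal follows essentially the same route as the paper: uniqueness from the fact that the trees $\stree{x}$ generate $\DTF$ (Proposition~\mref{prop:trid}), existence by defining $\bar f$ through the canonical factorization of a tree of breadth $m+1$ into a $\cdot$-product of breadth-two graftings (each expressed via $\succ_{\alpha_0}$, $\prec_{\alpha_1}$ and $\stree{x}$ according to which subtrees are $|$), and the morphism property by a main induction on $\dep(T)+\dep(U)$ with a secondary induction on breadth, repackaging via the axioms (notably Eq.~(\mref{eq:tdf1}) and iterated Eq.~(\mref{eq:tdf6})). This matches the paper's argument, so the plan is correct as it stands.
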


\begin{proof}
By Proposition~\mref{prop:trid}, we are left to prove $(\DTF,\{\prec_\omega,\succ_\omega\mid\omega\in\Omega\},\cdot)$ satisfies the universal property. For this, let $(T', \{\prec'_{\omega},\succ'_{\omega}\mid \omega\in \Omega\},  \cdot'\,)$ be a tridendriform family algebra. First of all, if there exists a tridendriform family algebra morphism $\bar{f}: \DTF\to T'$ extending $f:X\to T'$ in the sense that $\bar f(\stree{x})=f(x)$ for any $x\in X$, then such an $\bar f$ is unique, due to the fact that the collection of trees $\left(\stree{x}\right)_{x\in X}$ generate the tridendriform family algebra $\DTF$.
\ignore{
({\bf Uniqueness.}) Suppose that such $\bar{f}: \DTF\to T'$ exists.
We apply induction on $\dep(T)\geq 1$ to prove the uniqueness of $\bar{f}$.
For the initial step $\dep(T)=1,$ we have $T=\ssstree{x_1}{x_m}$ for some $x_1,\ldots,x_m\in X$ and
$$\bar{f}(T)=\bar{f}\Biggl(\ssstree{x_1}{x_m}\Biggr)=\bar{f}\bigl(j(x_1)\cdots j(x_m)\bigr)=f(x_1)\cdot'\cdots\cdot' f(x_m),$$
which is fixed. For the induction step of $\dep(T)\geq 2$,
we apply the secondary induction on $\bre(T)\geq 2$.

For the initial step of $\bre(T)=2,$ since $\dep(T)\geq 2$, we do not have
$T^{(0)} = | = T^{(1)}$ and there are three cases to consider.

{\noindent \bf Case 1:} $T^{(0)}=|$ and $T^{(1)}\neq |$. Then $\alpha_0=1$ and
\begin{align*}
\bar{f}(T)=& \ \bar{f}\biggl(\bigv x;2;\alpha_0,\alpha_1;(T^{(0)},T^{(1)})\biggr)=\bar{f}\biggl(\bigv x;2;1,\alpha_1;(|,T^{(1)})\biggr)\\
=& \ \bar{f}\biggl(\stree x\prec_{\alpha_1}T^{(1)}\biggr)\quad(\text{by Case 1 of Proposition~\mref{prop:trid}})\\
=& \ \bar{f}\biggl(\stree x\biggr)\prec'_{\alpha_1}\bar{f}(T^{(1)})\\
=& \ f(x)\prec'_{\alpha_1}\bar{f}(T^{(1)}).
\end{align*}
By the induction hypothesis, $\bar{f}(T^{(1)})$ is fixed and so $\bar{f}(T)$ is fixed.

{\noindent \bf Case 2:} $T^{(0)}\neq |$ and $T^{(1)}=|$. This case is similar to Case 1.

{\noindent \bf Case 3:} $T^{(0)}\neq |$ and $T^{(1)}\neq|$. Then
\begin{align*}
\bar{f}(T)=& \ \bar{f}\biggl(\bigv x;2;\alpha_0,\alpha_1;(T^{(0)},T^{(1)})\biggr)\\
=& \ \bar{f}\biggl(\biggl(T^{(0)}\succ_{\alpha_0}\stree x\biggr)\prec_{\alpha_1}T^{(1)}\biggr)\quad(\text{by Case 3 of Proposition ~\mref{prop:trid}})\\
=& \ \biggl(\bar{f}(T^{(0)})\succ'_{\alpha_0}\bar{f}\biggl(\stree x\biggr)\biggr)\prec'_{\alpha_1}\bar{f}(T^{(1)})\\
=& \ \bigl(\bar{f}(T^{(0)})\succ'_{\alpha_0}f(x)\bigr)\prec'_{\alpha_1}\bar{f}(T^{(1)}).
\end{align*}
By the induction hypothesis, $\bar{f}(T^{(0)})$ and $\bar{f}(T^{(1)})$ are fixed and so $\bar{f}(T)$ is fixed.\\

Assume now that $\bar{f}$ holds for $\dep(T)=k+1$ and $\bre(T)\leq l$, in addition to $\dep(T)\leq k$, by the induction hypothesis on $\dep(T)$ and consider the case when $\dep(T)=k+1\geq 2$ and $\bre(T)=l+1\geq 3.$
Then we write $T=\bigv x_1,\ldots,x_m;m+1;\alpha_0,\ldots,\alpha_m;(T^{(0)},\ldots,T^{(m)})$, there are four cases to consider.

{\noindent \bf Case 4:} $T^{(0)}=|$ and $T^{(1)}\neq |$. Then $\alpha_0=1$ and
\begin{align*}
\bar{f}(T)=& \ \bar{f}\biggl(\bigv x_1,x_2,\ldots,x_m;m+1;1,\alpha_1,\ldots,\alpha_m;(|,T^{(1)},\ldots,T^{(m)})\biggr)\\
=& \ \bar{f}\biggl(\biggl(\stree {x_1}\prec_{\alpha_1}T^{(1)}\biggr)\cdot\biggl(\stree {x_2}\prec_{\alpha_2}T^{(2)}\biggr)\cdots\biggl(\stree {x_m}\prec_{\alpha_m}T^{(m)}\biggr)\biggr)\\
& \ \quad(\text{by Case 4 of Proposition ~\mref{prop:trid}})\\
=& \ \bar{f}\biggl(\stree{x_1}\prec_{\alpha_1}T^{(1)}\biggr)\cdot'\bar{f}
\biggl(\stree{x_2}\prec_{\alpha_2}T^{(2)}\biggr)\cdot'\cdots\cdot' \bar{f}\biggl(\stree{x_m}\prec_{\alpha_m}T^{(m)}\biggr)\\
=& \ \biggl(\bar{f}\biggl(\stree{x_1}\biggr)\prec'_{\alpha_1}\bar{f}(T^{(1)})\biggr)\cdot'\biggl(\bar{f}\biggl(\stree{x_2}\biggr)
\prec'_{\alpha_2}\bar{f}(T^{(2)})\biggr)\cdot'\cdots\cdot'
\biggl(\bar{f}\biggl(\stree{x_m}\biggr)\prec'_{\alpha_m}\bar{f}(T^{(m)})\biggr)\\
=& \ \bigl(f(x_1)\prec'_{\alpha_1}\bar{f}(T^{(1)})\bigr)\cdot'
\bigl(f(x_2)\prec'_{\alpha_2}\bar{f}(T^{(2)})\bigr)
\cdot'\cdots\cdot'\bigl(f(x_m)\prec'_{\alpha_m}\bar{f}(T^{(m)})\bigr).
\end{align*}
By the induction hypothesis, $\bar{f}(T^{(i)})$ with $1\leq i\leq m$ are fixed and so $\bar{f}(T)$ is fixed.

{\noindent \bf Case 5:} $T^{(0)}\neq |$ and $T^{(1)}=|$. This case is similar to Case 4.

{\noindent \bf Case 6:} $T^{(0)}\neq |$ and $T^{(1)}\neq|$. Then
\begin{align*}
\bar{f}(T)=& \ \bar{f}\biggl(\bigv x_1,\ldots,x_m;m+1;\alpha_0,\ldots,\alpha_m;(T^{(0)},\ldots,T^{(m)})\biggr)\\
=& \ \bar{f}\Biggl(\Biggl(\biggl(T^{(0)}\succ_{\alpha_0}\stree{x_1}\biggr)\prec_{\alpha_1}T^{(1)}\Biggr)\cdot
\biggl(\stree{x_2}\prec_{\alpha_2}T^{(2)}\biggr)\cdots\biggl(\stree{x_m}\prec_{\alpha_m}T^{(m)}\biggr)\Biggr)\\
& \ \quad(\text{by Case 6 of Proposition ~\mref{prop:trid}})\\
=& \ \bar{f}\Biggl(\biggl(T^{(0)}\succ_{\alpha_0}\stree{x_1}\biggr)\prec_{\alpha_1}T^{(1)}\Biggr)
\cdot'\bar{f}\biggl(\stree{x_2}\prec_{\alpha_2}T^{(2)}\biggr)\cdot'\cdots\cdot'\bar{f}
\biggl(\stree{x_m}\prec_{\alpha_m}T^{(m)}\biggr)\\
=& \ \Biggl(\bar{f}\biggl(T^{(0)}\succ_{\alpha_0}\stree{x_1}\biggr)\prec'_{\alpha_1}\bar{f}(T^{(1)})\Biggr)
\cdot'\biggl(\bar{f}\biggl(\stree{x_2}\biggr)\prec'_{\alpha_2}\bar{f}(T^{(2)})\biggr)\cdot'\cdots\cdot'
\biggl(\bar{f}\biggl(\stree{x_m}\biggr)\prec'_{\alpha_m}\bar{f}(T^{(m)})\biggr)\\
=& \ \Bigl(\bigl(\bar{f}(T^{(0)})\succ'_{\alpha_0}f(x_1)\bigr)\prec'_{\alpha_1}\bar{f}(T^{(1)})\Bigr)\cdot'
\bigl(f(x_2)\prec'_{\alpha_2}\bar{f}(T^{(2)})\bigr)\cdot'\cdots\cdot'
\bigl(f(x_m)\prec'_{\alpha_m}\bar{f}(T^{m})\bigr).
\end{align*}
By the induction hypothesis, $\bar{f}(T^{(i)})$ with $1\leq i\leq m$ are fixed and so $\bar{f}(T)$ is fixed.

{\noindent \bf Case 7:} $T^{(0)}=|$ and $T^{(1)}=|$. Then
\begin{align*}
\bar{f}(T)=& \ \bar{f}\biggl(\bigv x_1,x_2,x_3,\ldots,x_m;m+1;1,1,\alpha_2,\ldots,\alpha_m;
(|,|,T^{(2)},\ldots,T^{(m)})\biggr)\\
=& \ \bar{f}\Biggl(\stree{x_1}\cdot \biggl(\stree{x_2}\prec_{\alpha_2}T^{(2)}\biggr)\cdots\biggl(\stree{x_m}\prec_{\alpha_m}T^{(m)}\biggr)\Biggr)\\
& \ \quad(\text{by Case 7 of Proposition ~\mref{prop:trid}})\\
=& \ \bar{f}\biggl(\stree{x_1}\biggr)\cdot'\bar{f}\biggl(\stree{x_2}\prec_{\alpha_2}T^{(2)}\biggr)
\cdot'\cdots\cdot'\bar{f}\biggl(\stree{x_m}\prec_{\alpha_m}T^{(m)}\biggr)\\
=& \ f(x_1)\cdot'\Biggl(\bar{f}\biggl(\stree{x_2}\biggr)\prec'_{\alpha_2}\bar{f}(T^{(2)})\Biggr)
\cdot'\cdots\cdot'\bar{f}\biggl(\stree{x_m}\prec_{\alpha_m}T^{(m)}\biggr)\\
=& \ f(x_1)\cdot'\Biggl(\bar{f}\biggl(\stree{x_2}\biggr)\prec'_{\alpha_2}\bar{f}(T^{(2)})\Biggr)\cdot'\cdots\cdot'
\Biggl(\bar{f}\biggl(\stree{x_m}\biggr)\prec'_{\alpha_m}\bar{f}(T^{(m)})\Biggr)\\
=& \ f(x_1)\cdot'\bigl(f(x_2)\prec'_{\alpha_2}\bar{f}(T^{(2)})\bigr)
\cdot'\cdots\cdot'\bigl(f(x_m)\prec'_{\alpha_m}\bar{f}(T^{(m)})\bigr).
\end{align*}
By the induction hypothesis, $\bar{f}(T^{(i)})$ with $2\leq i\leq m$ are fixed and so $\bar{f}(T)$ is fixed.
}
Now we define a linear map
$$\bar{f}: \DTF\rightarrow T', \quad T \mapsto \bar{f}(T)$$
by induction on $\dep(T)\geq 1$ and prove that it is a tridendriform family algebra morphism. We use the unique decomposition
\begin{align}\label{central-product-decomp}
T=&\bigv x_1,\ldots,x_m;m+1;\alpha_0,\ldots,\alpha_m;(T^{(0)},\ldots,T^{(m)})\nonumber\\
=&\bigv x_1;2;\alpha_0,\alpha_1;(T^{(0)},T^{(1)})\cdot\bigv x_2;2;1,\alpha_2;(|,T^{(2)})\cdot\ldots\cdot\bigv x_m;2;1,\alpha_m;(|,T^{(m)}),
\end{align}
together with:
\begin{align*}
\bigv x;2;\alpha_0,\alpha_1;(V,W)&=\stree{x} \hbox{ if } V=W=|,\\
&=V\succ_{\alpha_0} \stree{x}\hbox { if } V\neq | \hbox{ and } W=|,\\
&=\stree{x}\prec_{\alpha_1} W\hbox{ if } V=| \hbox{ and } W\neq |,\\
&=V\succ_{\alpha_0}\stree{x}\prec_{\alpha_1} W\hbox{ if } V\neq | \hbox{ and } W\neq |,
\end{align*}
which uniquely defines $\bar{f}(T)$ in terms of $\bar{f}(T^{(0)}),\ldots,\bar{f}(T^{(m)})$ and the operations $(\{\prec'_\omega,\succ'_\omega\,\mid \omega\in\Omega\},\cdot')$.
\ignore
{we apply the secondary induction on $\bre(T)\geq 2$. For the initial step $\bre(T)=2$, since $\dep(T)\geq 2$, we have $T\neq \stree x$ for any $x\in X$ and define
\begin{align}
\bar{f}(T):=& \ \bar{f}\biggl(\bigv x;2;\alpha_0,\alpha_1;(T^{(0)},T^{(1)})\biggr)\nonumber\\
:=& \ \left\{
\begin{array}{ll}
f(x)\prec'_{\alpha_1}\bar{f}(T^{(1)}), & \ \text{ if }\,T^{(0)}=|\neq T^{(1)};\\
\bar{f}(T^{(0)})\succ'_{\alpha_0} f(x), & \ \text{ if } T^{(0)}\neq |=T^{(1)};\\
\bigl(\bar{f}(T^{(0)})\succ'_{\alpha_0}  f(x)\bigr)\prec'_{\alpha_1}\bar{f}(T^{(1)}), & \ \text{ if }\,T^{(0)}\neq|\neq T^{(1)}.\mlabel{eq:tde3}
\end{array}
\right .
\end{align}
}
We are going to prove that $\bar{f}$ is a morphism of tridendriform family algebras:
$$\bar{f}(T\prec_\omega U)=\bar{f}(T)\prec'_\omega \bar{f}(U),\,\bar{f}(T\succ_\omega U)=\bar{f}(T)\succ'_\omega\bar{f}(U)\,\text{ and }\,\bar{f}(T\cdot U)=\bar{f}(T)\cdot'\bar{f}(U),$$
in which we only prove the first equation, as the proofs of the two other are similar. Starting from
$$T=\bigv x_1,\ldots,x_m;m+1;\alpha_0,\ldots,\alpha_m;(T^{(0)},\ldots,T^{(m)}), \hskip 8mm U=\bigv y_1,\ldots,y_n;n+1;\beta_0,\ldots,\beta_n;(U^{(0)},\ldots,U^{(n)})$$
we compute:
\begin{align}\label{TDF-morphism}
&\bar f(T\prec_\omega U)\nonumber\\
&=\bar f\left(\bigv x_1,\ldots, x_m;m+1;\alpha_0,\ldots,\alpha_{m-1},\alpha_m\omega;(T^{(0)},\ldots, T^{(m-1)}, T^{(m)}\succ_{\alpha_m} U+T^{(m)}\prec_\omega U+T^{(m)}\cdot U)\right)\nonumber\\
&=\bar f\left(\bigv x_1;2;\alpha_0,\alpha_1;(T^{(0)}, T^{(1)})\cdot\bigv x_2;2;1,\alpha_2;(|,T^{(2)})\cdot\ldots\cdot
\bigv x_m;2;1,\alpha_m\omega;(|,T^{(m)}\succ_{\alpha_m}U+T^{(m)}\prec_{\omega}U+T^{(m)}\cdot U)\right)\nonumber\\
&\hskip 50mm\hbox{ (from }\eqref{central-product-decomp})\nonumber\\
&=\bar f \left(\bigv x_1;2;\alpha_0,\alpha_1;(T^{(0)}, T^{(1)})\right)\cdot'\ldots\cdot'
\bar f\left(\stree{x_m}\prec_{\alpha_m\omega}(T^{(m)}\succ_{\alpha_m}U+T^{(m)}\prec_{\omega}U+T^{(m)}\cdot U)\right)\nonumber\\
&\hskip 50mm\hbox{ (from the definition of }\bar f)\nonumber\\
&=\bar f \left(\bigv x_1;2;\alpha_0,\alpha_1;(T^{(0)}, T^{(1)})\right)\cdot'\ldots\cdot'
\bar f\left(\Bigl(\stree{x_m}\prec_{\alpha_m} T^{(m)}\Bigr)\prec_{\omega} U\right)\hbox{ (from Axiom }\eqref{eq:tdf1}).
\end{align}
Now we proceed by induction on $\dep(T)+\dep(U)$, together with a secondary induction on $\bre(T)$. If $\dep(T)+\dep(U)=2$ (main initial step), then $T^{(0)}=\cdots=T^{(m)}=|$, and Equation~\eqref{TDF-morphism} boils down to
\begin{align*}
\bar f(T\prec_\omega U)&=\bar f\Bigl(\stree{x_1}\bigr)\cdot'\ldots\cdot'\bar f\Bigl(\stree{x_{m-1}}\Bigr)\cdot'\bar f\left(\stree{x_m}\prec_\omega U\right)\\
&=\bar f\Bigl(\stree{x_1}\Bigr)\cdot'\ldots\cdot'\bar f\Bigl(\stree{x_{m-1}}\Bigr)\cdot'\left(\bar f\Bigl(\stree{x_m}\Bigr)\prec'_\omega \bar f(U)\right)\hbox{ (from the definition of }\bar f)\\
&=\left(\bar f\Bigl(\stree{x_1}\Bigr)\cdot'\ldots\cdot'\bar f\Bigl(\stree{x_{m-1}}\Bigr)\cdot'\bar f\Bigl(\stree{x_m}\Bigr)\right)\prec'_\omega \bar f(U)\hbox{ (from an iteration of Axiom } \eqref{eq:tdf6})\\
&=\bar f (T)\prec'_\omega\bar f(U).
\end{align*}
If $\dep(T)+\dep(U)\ge 3$ and $\bre(T)=2$ (secondary initial step), then we have
\begin{align*}
\bar f(T\prec_\omega U)&=\bar f\left(\bigv x_1;2;\alpha_0,\alpha_1\omega;\bigl(T^{0)}, T^{(1)}\succ_{\alpha_1}U+T^{(1)}\prec_\omega U+T^{(1)}\cdot U\bigr)\right)\\
&=\bar f\left(T^{(0)}\succ_{\alpha_0}\stree{x_1}\prec_{\alpha_1\omega}\bigl(T^{(1)}\succ_{\alpha_1}U+T^{(1)}\prec_\omega U+T^{(1)}\cdot U\bigr)\right)\\
&=\bar f(T^{(0)})\succ'_{\alpha_0}\bar f\Bigl(\stree{x_1}\Bigr)\prec'_{\alpha_1\omega}\bar f\bigl(T^{(1)}\succ_{\alpha_1}U+T^{(1)}\prec_\omega U+T^{(1)}\cdot U\bigr) \hbox{ (by definition of }\bar f)\\
&=\bar f(T^{(0)})\succ'_{\alpha_0}\bar f\Bigl(\stree{x_1}\Bigr)\prec'_{\alpha_1\omega}\Bigl(\bar f(T^{(1)})\succ'_{\alpha_1}\bar f(U)+\bar f(T^{(1)})\prec'_\omega \bar f(U)+\bar f(T^{(1)})\cdot' \bar f(U)\Bigr)\\
&\hskip 50mm\hbox{ (by the main induction hypothesis)}\\
&=\bar f(T^{(0)})\succ'_{\alpha_0}\left(\left(\bar f\Bigl(\stree{x_1}\Bigr)\prec'_{\alpha_1}\bar f(T^{(1)})\right)\prec'_\omega\bar f(U)\right)\\
&=\left(\bar f(T^{(0)})\succ'_{\alpha_0}\left(\bar f\Bigl(\stree{x_1}\Bigr)\prec'_{\alpha_1}\bar f(T^{(1)})\right)\right)\prec'_\omega\bar f(U)\\
&=\bar f(T)\prec'_\omega \bar f(U)\hbox{ (again by definition of }\bar f).
\end{align*}
Finally, if $\dep(T)+\dep(U)\ge 3$ and $\bre(T)\ge 3$, we can push further Compilation \eqref{TDF-morphism} by using the secondary initial step for $\stree{x_m}\prec_{\alpha_m}T^{(m)}$, which is of breadth two:
\begin{align*}
\bar f(T\prec_\omega U)&=\bar f \Bigl(\bigv x_1;2;\alpha_0,\alpha_1;(T^{(0)}, T^{(1)})\Bigr)\cdot'\ldots\cdot'
\left(\bar f\Bigl(\stree{x_m}\prec_{\alpha_m} T^{(m)}\Bigr)\prec'_{\omega} \bar f(U)\right)\\
&=\left(\bar f \Bigl(\bigv x_1;2;\alpha_0,\alpha_1;(T^{(0)}, T^{(1)})\Bigr)\cdot'\ldots\cdot'
\bar f\Bigl(\stree{x_m}\prec_{\alpha_m} T^{(m)}\Bigr)\right)\prec'_{\omega} \bar f(U)\\
&\hskip 50mm\hbox{ (by iterating Axiom }\eqref{eq:tdf6})\\
&=\bar f\left(\Bigl(\bigv x_1;2;\alpha_0,\alpha_1;(T^{(0)}, T^{(1)})\Bigr)\cdot\ldots\cdot
\Bigl(\stree{x_m}\prec_{\alpha_m} T^{(m)}\Bigr)\right)\prec'_{\omega} \bar f(U)\\
&\hskip 50mm\hbox{ (by definition of }\bar f)\\
&=\bar f(T)\prec'_\omega\bar f(U).
\end{align*}
The verification of the two identities $\bar{f}(T\succ_\omega U)=\bar{f}(T)\succ'_\omega\bar{f}(U)\,\text{ and }\,\bar{f}(T\cdot U)=\bar{f}(T)\cdot'\bar{f}(U)$ is similar and left to the reader.

\ignore{
For this, we use induction on $\dep(T)+\dep(U)\geq 2$.
Write
$$T=\bigv x;2;\alpha_0,\alpha_1;(T^{(0)},T^{(1)})\,\text{ and }\, U=\bigv y;2;\beta_0,\beta_1;(U^{(0)},U^{(1)}).$$
For the initial step
$\dep(T)+\dep(U)=2$, we have
$$T=\stree x\,\text{ and }\, U=\stree y\,\text{ for some }\, x,y\in X.$$
So we obtain
\begin{align*}
\bar{f}(T\prec_\omega U)=& \ \bar{f}\biggl(\bigv x;2;1,1;(|,|)\prec_\omega \stree y\biggr)\\
=& \ \bar{f}\Biggl(\bigv x;2;1,\omega;\biggl(|,|\prec_\omega \stree y+|\succ_1\stree y+|\cdot\stree y\biggr)\Biggr)\quad(\text{by Eq.~(\mref{eq:tdpre})})\\
=& \ \bar{f}\Biggl(\bigv x;2;1,\omega;\biggl(|,\stree y\biggr)\Biggr)\quad(\text{by Item~\mref{it:trida} of Definition~\mref{defn:trirec} and Eq.~(\mref{eq:con})})\\
=& \ f(x)\prec'_\omega \bar{f}\biggl(\stree y\biggr)\quad(\text{by Eq.~(\mref{eq:tde3})})\\
=& \ \bar{f}\biggl(\stree x\biggr)\prec'_\omega \bar{f}\biggl(\stree y\biggr)\quad(\text{by Eq.~(\mref{eq:ini})})\\
=& \ \bar{f}(T)\prec'_\omega\bar{f}(U).
\end{align*}
This completes the proof of the initial step $\dep(T)+\dep(U)=2$.
For the induction step $\dep(T)+\dep(U)\geq 3,$ there exist four cases to consider.

{\noindent \bf Case 1:} $T^{(0)}=|$ and $T^{(1)}\neq |$. Then $\alpha_1=1$ and
\begin{align*}
\bar{f}(T\prec_\omega U)=& \ \bar{f}\Big(\bigv x;2;1,\alpha_1;(|,T^{(1)})\prec_\omega U\Big)\\
=& \ \bar{f}\Big(\bigv x;2;1,\alpha_1\omega;(|,T^{(1)}\prec_\omega U+T^{(1)}\succ_{\alpha_1}U+T^{(1)}\cdot U)\Big)\quad(\text{by Eq.~(\mref{eq:tdpre})})\\
=& \ f(x)\prec'_{\alpha_1\omega}\bar{f}(T^{(1)}\prec_\omega U+T^{(1)}\succ_{\alpha_1} U+T^{(1)}\cdot U)\quad(\text{by Eq.~(\mref{eq:tde3})})\\
=& \ f(x)\prec'_{\alpha_1\omega}\Big(\bar{f}(T^{(1)}\prec'_\omega \bar{f}(U)+\bar{f}(T^{(1)})\succ'_{\alpha_1}\bar{f}(U)+\bar{f}(T^{(1)})\cdot'\bar{f}(U)\Big)\\
&\hspace{1cm}(\text{by the induction hypothesis on $\dep(T)+\dep(U)$})\\
=& \ \Big(f(x)\prec'_{\alpha_1}\bar{f}(T^{(1)})\Big)\prec'_\omega\bar{f}(U)\quad(\text{by Eq.~(\mref{eq:tdf1})})\\
=& \ \bar{f}\Big(\bigv x;2;1,\alpha_1;(|,T^{(1)})\Big)\prec'_\omega \bar{f}(U)\quad(\text{by Eq.~(\mref{eq:tde3})})\\
=& \ \bar{f}(T)\prec'_\omega\bar{f}(U).
\end{align*}

{\noindent \bf Case 2:} $T^{(0)}\neq|$ and $T^{(1)}=|$. This case is similar to Case 1.

{\noindent \bf Case 3:} $T^{(0)}\neq|$ and $T^{(1)}\neq |$. Then
\begin{align*}
\bar{f}(T\prec_\omega U)=& \ \bar{f}(\bigv x;2;\alpha_0,\alpha_1;(T^{(0)},T^{(1)})\prec_\omega U)\\
=& \ \bar{f}\Big(\bigv x;2;\alpha_0,\alpha_1\omega;(T^{(0)},T^{(1)}\succ_{\alpha_1} U+T^{(1)}\prec_\omega U+T^{(1)}\cdot U)\Big)\quad(\text{by Eq.~(\mref{eq:tdpre})})\\
=& \ \Big(\bar{f}(T^{(0)})\succ'_{\alpha_0} f(x)\Big)\prec'_{\alpha_1\omega}\bar{f}(T^{(1)}\succ_{\alpha_1} U+T^{(1)}\prec_\omega U+T^{(1)}\cdot U)\quad(\text{by Eq.~(\mref{eq:tde3})})\\
=& \ \Big(\bar{f}(T^{(0)})\succ'_{\alpha_0} f(x)\Big)\prec'_{\alpha_1\omega}\Big(\bar{f}(T^{(1)})\succ'_{\alpha_1} \bar{f}(U)+\bar{f}(T^{(1)})\prec'_\omega\bar{f}(U)+\bar{f}(T^{(1)})\cdot'\bar{f}(U)\Big)\\
&\hspace{2cm}(\text{by the induction hypothesis on $\dep(T)+\dep(U)$})\\
=& \ \Big(\Big(\bar{f}(T^{(0)})\succ'_{\alpha_0} f(x)\Big)\prec'_{\alpha_1} \bar{f}(T^{(1)})\Big)\prec'_\omega\bar{f}(U)\quad(\text{by Eq.~(\mref{eq:tdf1})})\\
=& \ \bar{f}\Big(\bigv x;2;\alpha_0,\alpha_1;(T^{(0)}, T^{(1)})\Big)\prec'_\omega \bar{f}(U)\quad(\text{by Eq.~(\mref{eq:tde3})})\\
=& \ \bar{f}(T)\prec'_\omega\bar{f}(U).
\end{align*}

{\noindent \bf Case 4:} $T^{(0)}=|$ and $T^{(1)}=|$. Then $T=\stree x$ for some $x\in X$ and this case is relevant to the initial step $\dep(T)+\dep(U)=2.$
This completes the proof of initial step $\bre(T) = 2$.\\

Assume that the result holds for $\dep(T)=k+1$ and $\bre(T)\leq l,$ in addition to $\dep(T)\leq k,$ by the induction hypothesis on $\dep(T)$, and consider the case when $\dep(T)=k+1\geq 2$ and $\bre(T)=l+1\geq 3.$
 Define
\begin{align}
\bar{f}(T):=& \ \bar{f}\Big(\bigv x_1,\ldots,x_{m-1},x_m;m+1;1,\ldots,1,\alpha_m;(|,\ldots,|,T^{(m)})\Big)\nonumber\\
:=& \ \left\{
\begin{array}{ll}
f(x_1)\cdot' f(x_2)\cdot'\cdots\cdot' f(x_m), & \text{ if }T^{(m)}=|;\\
f(x_1)\cdot' f(x_2)\cdot'\cdots\cdot'\Big(f(x_m)\prec'_{\alpha_m}\bar{f}(T^{(m)})\Big), & \text{ if }T^{(m)}\neq|.
\mlabel{eq:ini}
\end{array}
\right .
\end{align}
We are going to prove that $\bar{f}$ is a morphism of tridendriform family algebras:
$$\bar{f}(T\prec_\omega U)=\bar{f}(T)\prec'_\omega \bar{f}(U),\,\bar{f}(T\succ_\omega U)=\bar{f}(T)\succ'_\omega\bar{f}(U)\,\text{ and }\,\bar{f}(T\cdot U)=\bar{f}(T)\cdot'\bar{f}(U),$$
in which we only prove the first equation, as the proofs of the two other are similar.
For this, we use induction on $\dep(T)+\dep(U)\geq 2$. Write $$T=\bigv x_1,\ldots,x_m;m+1;\alpha_0,\ldots,\alpha_m;(T^{(0)},\ldots,T^{(m)})\,\text{ and }\, U=\bigv y_1,\ldots,y_n;n+1;\beta_0,\ldots,\beta_n;(U^{(0)},\ldots,U^{(n)}).$$
For the initial step of $\dep(T)+\dep(U)=2,$ we have
 $$T=\bigv x_1,\ldots,x_m;m+1;1,\ldots,1;(|,\ldots,|)\,\text{ and }\, U=\bigv y_1,\ldots,y_n;n+1;1,\ldots,1;(|,\ldots,|).$$
So we obtain
\begin{align*}
\bar{f}(T\prec_\omega U)=& \ \bar{f}\Big(\bigv x_1,x_2,\ldots,x_m;m+1;1,1,\ldots,1;(|,|,\ldots,|)\prec_\omega U\Big)\\
=& \ \bar{f}\Big(\bigv x_1,x_2,\ldots,x_{m-1},x_m;m+1;1,\ldots,1,\omega;(|,|,\ldots,|,|\prec_\omega U+|\succ_1 U+|\cdot U \Big)\quad(\text{by Eq.~(\mref{eq:tdpre})})\\
=& \ \bar{f}\Big(\bigv x_1,x_2,\ldots,x_{m-1},x_m;m+1;1,1,\ldots,1,\omega;(|,|,\ldots,|,U)\Big)\quad(\text{by Item~\mref{it:trida} of Definition~\mref{defn:trirec} and Eq.~(\mref{eq:con})})\\
=& \ f(x_1)\cdot'f(x_2)\cdot'\cdots\cdot'\Big(f(x_m)\prec'_\omega \bar{f}(U)\Big)\quad \quad(\text{by Eq.~(\mref{eq:ini})}) \\
=& \ \Big(f(x_1)\cdot'f(x_2)\cdot'\cdots\cdot' f(x_m))\prec'_\omega\bar{f}(U)\quad(\text{by Eq.~(\mref{eq:tdf6})})\\
=& \ \bar{f}(T)\prec'_\omega \bar{f}(U).
\end{align*}
This completes the proof of the initial step of $\dep(T)+\dep(U)=2.$

For the induction step of $\dep(T)+\dep(U)\geq 3$, there are four cases to consider. In each of the cases, we first define the linear map $\bar{f}$ and then
only prove
$$\bar{f}(T\prec_\omega U)=\bar{f}(T)\prec'_\omega\bar{f}(U),$$
as the proofs of
$$\bar{f}(T\succ_\omega U)=\bar{f}(T)\succ'_\omega\bar{f}(U)\,\text{ and }\, \bar{f}(T\cdot U)=\bar{f}(T)\cdot'\bar{f}(U).$$
are similar.

{\noindent \bf Case 5:} $T^{(0)}=|$ and $T^{(1)}\neq |$. Then $\alpha_0=1$ and define
\begin{align}
\bar{f}(T):=& \ \bar{f}\Big(\bigv x_1,x_2,\ldots,x_m;m+1;1,\alpha_1,\ldots,\alpha_m;(|,T^{(1)},\ldots,T^{(m)})\Big)\nonumber\\
:=& \ \Big(f(x_1)\prec'_{\alpha_1}\bar{f}(T^{(1)})\Big)\cdot'
\Big(f(x_2)\prec'_{\alpha_2}\bar{f}(T^{(2)})\Big)
\cdot'\cdots\cdot'\Big(f(x_m)\prec'_{\alpha_m}\bar{f}(T^{(m)})\Big).
\mlabel{eq:defn}
\end{align}
Then we have
\begin{align*}
&\bar{f}(T\prec_\omega U)\\
=& \ \bar{f}\Big(\bigv x_1,x_2,\ldots,x_m;m+1;1,\alpha_1,\ldots,\alpha_m;(|,T^{(1)},\ldots,T^{(m)})\prec_\omega U\Big)\\
=& \ \bar{f}\Big(\bigv x_1,x_2,\ldots,x_{m-1},x_m;m+1;1,\alpha_1,\ldots,\alpha_{m-1},\alpha_m\omega;
(|,T^{(1)},\ldots,T^{(m-1)},T^{(m)}\prec_\omega U+T^{(m)}\succ_{\alpha_m}U+T^{(m)}\cdot U)\Big)\quad(\text{by Eq.~(\mref{eq:tdpre})})\\
=& \ \Big(\Big(f(x_1)\prec'_{\alpha_1}\bar{f}(T^{(1)})\Big)\cdot'
\Big(f(x_2)\prec_{\alpha_2}'\bar{f}(T^{(2)})\Big)\cdot'\cdots\cdot' \Big(f(x_{m-1})\prec_{\alpha_{m-1}}'\bar{f}(T^{(m-1)})\Big)\\
& \ \cdot'\Big(f(x_m)\prec'_{\alpha_m\omega}(\bar{f}(T^{(m)})
\prec'_\omega \bar{f}(U)+\bar{f}(T^{(m)})\succ'_{\alpha_m} \bar{f}(U)+\bar{f}(T^{(m)})\cdot' \bar{f}(U))\Big)\\
&\hspace{1cm}\text{(by Eq.~(\mref{eq:defn}) and by the induction hypothesis on $\dep(T)+\dep(U)$)}\\
=& \ \Big(\Big(f(x_1)\prec'_{\alpha_1}\bar{f}(T^{(1)})\Big)\cdot'
\Big(f(x_2)\prec_{\alpha_2}'\bar{f}(T^{(2)})\Big)\cdot'\cdots\cdot' \Big(f(x_{m-1})\prec_{\alpha_{m-1}}'\bar{f}(T^{(m-1)})\Big)\\
& \ \cdot'\Big((f(x_m)\prec'_{\alpha_m}\bar{f}(T^{(m)}))
\prec'_\omega \bar{f}(U)\Big) \quad\text{(by Eq.~(\mref{eq:tdf1})})\\
=& \ \Big(\Big(f(x_1)\prec'_{\alpha_1}\bar{f}(T^{(1)})\Big)\cdot'
\Big(f(x_2)\prec_{\alpha_2}'\bar{f}(T^{(2)})\Big)\cdot'\cdots\cdot' \Big(f(x_m)\prec_{\alpha_m}'\bar{f}(T^{(m)})\Big)\Big)\prec'_\omega \bar{f}(U)\quad\text{(by Eq.~(\mref{eq:tdf6})})\\
=& \ \bar{f}(T)\prec'_\omega\bar{f}(U).
\end{align*}

{\noindent \bf Case 6:} $T^{(0)}\neq |$ and $T^{(1)}=|$. This case is similar to Case 5.

{\noindent \bf \bf Case 7:} $T^{(0)}\neq |$ and $T^{(1)}\neq|$. Define
\begin{align}
\bar{f}(T) :=& \ \bar{f}\Big(\bigv x_1,\ldots,x_m;m+1;\alpha_0,\ldots,\alpha_m;(T^{(0)},\ldots,T^{(m)})\Big)\nonumber\\
:=& \Big((\bar{f}(T^{(0)})\succ'_{\alpha_0}f(x_1))\prec'_{\alpha_1}\bar{f}(T^{(1)})\Big)\cdot'
\Big(f(x_2)\prec'_{\alpha_2}\bar{f}(T^{(2)})\Big)\cdot'\cdots\cdot'
\Big(f(x_m)\prec'_{\alpha_m}\bar{f}(T^{m})\Big).\mlabel{eq:def}
\end{align}
Then we have
\begin{align*}
&\bar{f}(T\prec_\omega U)\\
=& \ \bar{f}\Big(\bigv x_1,\ldots,x_m;m+1;\alpha_0,\ldots,\alpha_m;(T^{(0)},\ldots,T^{(m)})\prec_\omega U\Big)\\
=& \ \bar{f}\Big(\bigv x_1,\ldots,x_{m-1},x_m;m+1;\alpha_0,\ldots,\alpha_{m-1},\alpha_m\omega;(T^{(0)},\ldots,
T^{(m-1)},T^{(m)}\prec_\omega U+T^{(m)}\succ_{\alpha_m} U+T^{(m)}\cdot U)\Big)\\
=& \ \Big((\bar{f}(T^{(0)})\succ'_{\alpha_0}f(x_1))\prec'_{\alpha_1}\bar{f}(T^{(1)})\Big)\cdot'
\Big(f(x_2)\prec'_{\alpha_2}\bar{f}(T^{(2)})\Big)\cdot'\cdots\cdot'
\Big(f(x_m)\prec'_{\alpha_{m-1}}\bar{f}(T^{m-1})\Big)\\
& \ \cdot'\Big(f(x_m)\prec_{\alpha_m\omega}\bar{f}(T^{(m)}\prec_\omega U+T^{(m)}\succ_{\alpha_m}U+T^{(m)}\cdot U)\Big)
\quad(\text{by Eq.~(\mref{eq:def})})\\
=& \ \Big((\bar{f}(T^{(0)})\succ'_{\alpha_0}f(x_1))\prec'_{\alpha_1}\bar{f}(T^{(1)})\Big)
\cdot'\Big(f(x_2)\prec'_{\alpha_2}\bar{f}(T^{(2)})\Big)\cdot'\cdots\cdot'
\Big(f(x_m)\prec'_{\alpha_{m-1}}\bar{f}(T^{m-1})\Big)\\
& \ \cdot'\Big(f(x_m)\prec_{\alpha_m\omega}\Big(\bar{f}(T^{(m)})\prec'_\omega \bar{f}(U)+\bar{f}(T^{(m)})\succ'_{\alpha_m}\bar{f}(U)+\bar{f}(T^{(m)})\cdot' \bar{f}(U)\Big)\\
& \ \hspace{3cm}\text{(by the induction hypothesis on $\dep(T)+\dep(U)$)}\\
=& \ \Big((\bar{f}(T^{(0)})\succ'_{\alpha_0}f(x_1))\prec'_{\alpha_1}\bar{f}(T^{(1)})\Big)\cdot'
\Big(f(x_2)\prec'_{\alpha_2}\bar{f}(T^{(2)})\Big)\cdot'\cdots\cdot'
\Big(f(x_m)\prec'_{\alpha_{m-1}}\bar{f}(T^{m-1})\Big)\\
& \ \cdot'\Big((f(x_m)\prec'_{\alpha_m}\bar{f}(T^{(m)}))\prec'_\omega \bar{f}(U)\Big)
\quad \text{(by Eq.~(\mref{eq:tdf1})})\\
=& \ \Big(\Big((\bar{f}(T^{(0)})\succ'_{\alpha_0}f(x_1))\prec'_{\alpha_1}\bar{f}(T^{(1)})\Big)
\cdot'\Big(f(x_2)\prec'_{\alpha_2}\bar{f}(T^{(2)})\Big)\cdot'\cdots\cdot'
\Big(f(x_m)\prec'_{\alpha_m}\bar{f}(T^{m})\Big)\Big)\prec'_\omega \bar{f}(U)\\
& \ \hspace{3cm}\text{(by Eq.~(\mref{eq:tdf6})})\\
=& \ \bar{f}(T)\prec'_\omega\bar{f}(U).
\end{align*}

{\noindent \bf Case 8:} $T^{(0)}=|$ and $T^{(1)}=|$. Define
\begin{align}
\bar{f}(T) :=& \ \bar{f}(\bigv x_1,x_2,x_3,\ldots,x_m;m+1;1,1,\alpha_2,\ldots,\alpha_m;(|,|,T^{(2)},\ldots,T^{(m)}))
\nonumber\\
:=& \ f(x_1)\cdot'\Big(f(x_2)\prec'_{\alpha_2}\bar{f}(T^{(2)})\Big)
\cdot'\cdots\cdot'\Big(f(x_m)\prec'_{\alpha_m}\bar{f}(T^{(m)})\Big).
\mlabel{eq:three}
\end{align}
Then
\begin{align*}
\bar{f}(T\prec_\omega U)=& \ \bar{f}\Big(\bigv x_1,x_2,x_3,\ldots,x_m;m+1;1,1,\alpha_2,\ldots,\alpha_m;(|,|,T^{(2)},\ldots,T^{(m)})
\prec_\omega U\Big)\\
=& \ \bar{f}\Big(\bigv x_1,x_2,x_3,\ldots,x_{m-1},x_m;m+1;1,1,\alpha_2,\ldots,\alpha_{m-1},\alpha_m\omega;
(|,|,T^{(2)},\ldots,T^{(m-1)},T^{(m)}\prec_\omega U+T^{(m)}\succ_{\alpha_m} U+T^{(m)}\cdot U)\Big)\\
& \ \hspace{3cm}(\text{by Eq.~(\mref{eq:tdpre})})\\
=& \ f(x_1)\cdot'\Big(f(x_2)\prec'_{\alpha_2}\bar{f}(T^{(2)})\Big)
\cdot'\cdots\cdot'\Big(f(x_{m-1})\prec'_{\alpha_{m-1}}\bar{f}(T^{(m-1)})\Big)\\
& \ \cdot'\Big(f(x_m)\prec'_{\alpha_m\omega}\bar{f}(T^{(m)}\prec_\omega U+T^{(m)}\succ_{\alpha_m}U+T^{(m)}\cdot U)\Big)\\
=& \ f(x_1)\cdot'\Big(f(x_2)\prec'_{\alpha_2}\bar{f}(T^{(2)})\Big)
\cdot'\cdots\cdot'\Big(f(x_{m-1})\prec'_{\alpha_{m-1}}\bar{f}(T^{(m-1)})\Big)\\
& \ \cdot'\Big(f(x_m)\prec'_{\alpha_m\omega}\Big(\bar{f}(T^{(m)})\prec'_\omega \bar{f}(U)+\bar{f}(T^{(m)})\succ'_{\alpha_m}\bar{f}(U)+\bar{f}(T^{(m)})\cdot' \bar{f}(U)\Big)\Big)\\
&\hspace{1cm}\text{(by the induction hypothesis on $\dep(T)+\dep(U)$)}\\
=& \ f(x_1)\cdot'\Big(f(x_2)\prec'_{\alpha_2}\bar{f}(T^{(2)})\Big)
\cdot'\cdots\cdot'\Big(f(x_{m-1})\prec'_{\alpha_{m-1}}\bar{f}(T^{(m-1)})\Big)\\
& \ \cdot'\Big((f(x_m)\prec'_{\alpha_m}\bar{f}(T^{(m)}))\prec'_\omega\bar{f}(U)\Big)
\quad(\text{by Eq.~(\mref{eq:tdf1})})\\
=& \ \Big(f(x_1)\cdot'\Big(f(x_2)\prec'_{\alpha_2}\bar{f}(T^{(2)})\Big)
\cdot'\cdots\cdot'\Big(f(x_m)\prec'_{\alpha_m}\bar{f}(T^{(m)})\Big)\Big)\prec'_\omega\bar{f}(U)
\quad(\text{by Eq.~(\mref{eq:tdf6})})\\
=& \ \bar{f}(T)\prec'_\omega \bar{f}(U).
\end{align*}
 This completes the induction on $\dep(T)+\dep(U)$, and so the induction on $\bre(T)$ and hence the induction on $\dep(T)$.
 }
\end{proof}

\smallskip

\noindent {\bf Acknowledgments}:
This work was supported by the National Natural Science Foundation
of China (Grant No.\@ 11771191 and 11861051). 
\medskip

\section{Appendix}
The remaining proofs of Step $1$ in Proposition~\mref{prop:trid}.
{\small{
\begin{align*}
&(T\succ_\alpha U)\prec_\gamma W\\
=& \ \bigv y_1,y_2,\ldots,y_n;n+1;\alpha\beta_0,\beta_1,\ldots,\beta_n;\Big(T\succ_\alpha U^{(0)}+T\prec_{\beta_0} U^{(0)}+T\cdot U^{(0)}),U^{(1)},\ldots,U^{(n)}\Big)\prec_\gamma W\\
=& \ \bigv y_1,y_2,\ldots,y_{n-1},y_n;n+1;\alpha\beta_0,\beta_1,\ldots,\beta_{n-1},\beta_n\gamma;
\Big(T\succ_\alpha U^{(0)}+T\prec_{\beta_0} U^{(0)}+T\cdot U^{(0)},U^{(1)},\ldots,U^{(n-1)},\\
& \ U^{(n)}\succ_{\beta_n}W+U^{(n)}\prec_{\gamma}W+U^{(n)}\cdot W\Big)\\
=& \ T\succ_\alpha\Big(\bigv y_1,\ldots,y_{n-1},y_n;n+1;\beta_0,\ldots,\beta_{n-1},\beta_n\gamma;(U^{(0)},
\ldots,U^{(n-1)},U^{(n)}\succ_{\beta_n}W+U^{(n)}\prec_\gamma W+U^{(n)}\cdot W\Big)\\
=& \ T\succ_\alpha(U\prec_\gamma W).\\
\setlength{\baselineskip}{20pt}\\
 &T\succ_\alpha(U\succ_\beta W)\\
=& \ T\succ_\alpha\Big(\bigv z_1,z_2,\ldots,z_\ell;\ell+1;\beta\gamma_0,\gamma_1,\ldots,\gamma_\ell;(U\succ_\beta W^{(0)}+U\prec_{\gamma_0}W^{(0)}+U\cdot W^{(0)}, W^{(1)},\ldots,W^{(l)})\Bigr)\\
=& \ \bigv z_1,z_2,\ldots,z_\ell;\ell+1;\alpha\beta\gamma_0,\gamma_1,\ldots,\gamma_\ell;
\Big(T\succ_\alpha(U\succ_\beta W^{(0)}+U\prec_{\gamma_0}W^{(0)}+U\cdot W^{(0)})+T\prec_{\beta\gamma_0}(U\succ_\beta W^{(0)}
 +U\prec_{\gamma_0}W^{(0)}\\
& \  +U\cdot W^{(0)})+T\cdot(U\succ_\beta W^{(0)}+U\prec_{\gamma_0}W^{(0)}+U\cdot W^{(0)}),W^{(1)},\ldots,W^{(\ell)}\Big)\\
=& \ \bigv z_1,z_2,\ldots,z_\ell;\ell+1;\alpha\beta\gamma_0,\gamma_1,\ldots,\gamma_\ell;\Big(T\succ_\alpha(U\succ_\beta W^{(0)})+T\succ_\alpha(U\prec_{\gamma_0}W^{(0)})+T\succ_\alpha(U\cdot W^{(0)})+(T\prec_\beta U)\prec_{\gamma_0}W^{(0)}\\
& \ +T\cdot(U\succ_\beta W^{(0)})+T\cdot(U\prec_{\gamma_0}W^{(0)})+T\cdot(U\cdot W^{(0)}),W^{(1)},\ldots,W^{(\ell)}\Big)\\
=& \ \bigv z_1,z_2,\ldots,z_\ell;\ell+1;\alpha\beta\gamma_0,\gamma_1,\ldots,\gamma_\ell;\Big((T \succ_\alpha U+T\prec_\beta U+T\cdot U)\succ_{\alpha\beta}W^{(0)}+(T\succ_\alpha U+T\prec_\beta U+T\cdot U)\prec_{\gamma_0}W^{(0)}\\
& \ +(T\succ_\alpha U+T\prec_\beta U+T\cdot U)\cdot W^{(0)},W^{(1)},\ldots,W^{(\ell)}\Big)\\
=& \ (T\succ_\alpha U+T\prec_\beta U+T\cdot U)\succ_{\alpha\beta}(\bigv z_1,\ldots,z_\ell;\ell+1;\gamma_0,\ldots,\gamma_\ell;(W^{(0)},\ldots,W^{l})\\
=& \ (T\succ_\alpha U+T\prec_\beta U+T\cdot U)\succ_{\alpha\beta}W.\\
%
\setlength{\baselineskip}{20pt}\\
 &(T\succ_\alpha U)\cdot W\\
=& \ \bigv y_1,y_2,\ldots,y_n;n+1;\alpha\beta_0,\beta_1,\ldots,\beta_n;\Big(T\succ_\alpha U^{(0)}+T\prec_{\beta_0} U^{(0)}+T\cdot U^{(0)},U^{(1)},\ldots,U^{(n)}\Big)\cdot\bigv z_1,\ldots,z_\ell;\ell+1;\gamma_0,\ldots,\gamma_\ell;(W^{(0)},\ldots,W^{(l)})\\
=& \ \bigv y_1,y_2,\ldots,y_n,z_1,z_2,\ldots z_\ell;n+1+\ell;\alpha\beta_0,\beta_1,\ldots,\beta_{n-1},\beta_n\gamma_0,
\gamma_1,\ldots,\gamma_\ell;\Big(T\succ_\alpha U^{(0)}+T\prec_{\beta_0} U^{(0)}+T\cdot U^{(0)},U^{(1)},\ldots,U^{(n-1)},\\
& \ U^{(n)}\succ_{\beta_n}W^{(0)}
+U^{(n)}\prec_{\gamma_0}W^{(0)}+U^{(n)}\cdot W^{(0)},
W^{(1)},\ldots,W^{(l)}\Bigr)\\
=& \ T\succ_\alpha\bigv y_1,\ldots,y_{n-1},y_n,z_1,\ldots,z_\ell;n+1+\ell;\beta_0,\ldots,
\beta_{n-1},\beta_n\gamma_0,z_1,\ldots,z_\ell;\Big(U^{(0)},\ldots,U^{(n-1)},
U^{(n)}\succ_{\beta_n}W^{(0)}
+U^{(n)}\prec_{\gamma_0}W^{(0)}+U^{(n)}\cdot W^{(0)},\\
& \ W^{(1)},\ldots,W^{(l)}\Big)\\
=& \ T\succ_\alpha(U\cdot W).\\
\setlength{\baselineskip}{20pt}\\
 &(T\prec_\beta U)\cdot W\\
=& \ \bigv x_1,\ldots,x_{m-1},x_m;m+1;\alpha_0,\ldots,\alpha_{m-1},\alpha_m\beta;\Big(T^{(0)},
\ldots,T^{(m-1)},T^{(m)}\succ_{\alpha_m}U+T^{(m)}\prec_\beta U+T^{(m)}\cdot U\Big)\\
& \ \cdot\bigv z_1,\ldots,z_\ell;\ell+1;\gamma_0,\ldots,\gamma_\ell;(W^{(0)},\ldots,W^{(l)})\\
=& \ \bigv x_1,\ldots,x_{m-1},x_m,z_1,\ldots,z_\ell;m+1+\ell;\alpha_0,\ldots,\alpha_{m-1},
\alpha_m\beta\gamma_0,\gamma_1,\ldots,\gamma_\ell;\Big(T^{(0)},\ldots,T^{(m-1)},
(T^{(m)}\succ_{\alpha_m}U+T^{(m)}\prec_\beta U+T^{(m)}\cdot U)\succ_{\alpha_m\beta}W^{(0)}\\
& \ +(T^{(m)}\succ_{\alpha_m}U+T^{(m)}\prec_\beta U+T^{(m)}\cdot U)\prec_{\gamma_0}W^{(0)}+(T^{(m)}\succ_{\alpha_m}U+T^{(m)}\prec_\beta U\\
& \ +T^{(m)}\cdot U)\cdot W^{(0)},W^{(1)},\ldots,W^{(\ell)}\Big)\\
=& \ \bigv x_1,\ldots,x_{m-1},x_m,z_1,\ldots,z_\ell;m+1+\ell;
\alpha_0,\ldots,\alpha_{m-1},\alpha_m\beta\gamma_0,\gamma_1,\ldots,\gamma_\ell;\Big(T^{(0)},
\ldots,T^{(m-1)},T^{(m)}\succ_{\alpha_m}(U\succ_\beta W^{(0)}+U\prec_{\gamma_0}W^{(0)}+U\cdot W^{(0)})\\
& \ +T^{(m)}\prec_{\beta\gamma_0}(U\succ_\beta W^{(0)}+U\prec_{\gamma_0}W^{(0)}+U\cdot W^{(0)})+T^{(m)}\cdot(U\succ_\beta W^{(0)}+U\prec_{\gamma_0}W^{(0)}
+U\cdot W^{(0)}),\\
& \  W^{(1)},\ldots,W^{(\ell)}\Big)\\
=& \ \bigv x_1,\ldots,x_m;m+1;\alpha_0,\ldots,\alpha_m;(T^{(0)},\ldots,T^{(m)})\cdot \bigv z_1,z_2,\ldots,z_\ell;\ell+1;\beta\gamma_0,\gamma_1,\ldots,\gamma_\ell;\Big(U\succ_\beta W^{(0)}+U\prec_{\gamma_0}W^{(0)}+U\cdot W^{(0)},
W^{(1)},\ldots,W^{(l)}\Big)\\
=& \ T\cdot\bigv z_1,z_2,\ldots,z_\ell;\ell+1;\beta\gamma_0,\gamma_1,\ldots,\gamma_\ell;\Big(U\succ_\beta W^{(0)}+U\prec_{\gamma_0}W^{(0)}+U\cdot W^{(0)},
W^{(1)},\ldots,W^{(l)}\Big)\\
=& \ T\cdot(U\succ_\beta W).\\
\setlength{\baselineskip}{20pt}\\
&(T\cdot U)\prec_\gamma W\\
=& \ \bigv x_1,\ldots,x_m,y_1,y_2,\ldots,y_n;m+1+n;\alpha_0,\ldots,\alpha_{m-1},\alpha_m\beta_0,
\beta_1,\ldots,\beta_n;\Big(T^{(0)},\ldots,T^{(m-1)},T^{(m)}\succ_{\alpha_m} U^{(0)}+T^{(m)}\prec_{\beta_0} U^{(0)}+T^{(m)}\cdot U^{(0)},U^{(1)},\ldots,U^{(n)}\Big)\\
=& \ \bigv x_1,\ldots,x_{m-1},x_m,y_1,\ldots,y_{n-1},y_n;m+1+n;\alpha_0,\ldots,
\alpha_{m-1},\alpha_m\beta_0,\beta_1,\ldots,\beta_{n-1},\beta_n\gamma;\Big(T^{(0)},
\ldots,T^{(m-1)},T^{(m)}\succ_{\alpha_m} U^{(0)}+T^{(m)}\prec_{\beta_0} U^{(0)}+T^{(m)}\cdot U^{(0)},\\
& \ U^{(1)},\ldots,U^{(n-1)},
 U^{(n)}\succ_{\beta_n}W+U^{(n)}\prec_\gamma W+U^{(n)}\cdot W\Big)\\
=& \ \bigv x_1,\ldots,x_m;m+1;\alpha_0,\ldots,\alpha_m;(T^{(0)},\ldots,T^{(m)})\cdot \bigv y_1,\ldots,y_{n-1},y_n;n+1;\beta_0,\ldots,\beta_{n-1},\beta_n\gamma;
\Big(U^{(0)},\ldots,U^{(n-1)},U^{(n)}\succ_{\beta_n}W+U^{(n)}\prec_\gamma W+U^{(n)}\cdot W\Big)\\
=& \ T\cdot(U\prec_\gamma W).\\[20pt]
%
 &(T\cdot U)\cdot W\\
=& \ \bigv x_1,\ldots,x_{m-1},x_m,y_1,y_2,\ldots,y_n;m+1;\alpha_0,\ldots,
\alpha_{m-1},\alpha_m\beta_0,\beta_1,\ldots,\beta_n;\Big(T^{(0)},\ldots,T^{(m-1)},T^{(m)}\succ_{\alpha_m}U^{(0)}+T^{(m)}
\prec_{\beta_0}U^{(0)}
+T^{(m)}\cdot U^{(0)},
 U^{(0)},\ldots,U^{(n)}\Big)\\
& \  \cdot\bigv z_1,\ldots,z_\ell;\ell+1;\gamma_0,\ldots,\gamma_\ell;(W^{(0)},\ldots,W^{(l)})\\
=& \ \bigv x_1,\ldots,x_{m-1},x_m,y_1,\ldots,y_{n-1},y_n,z_1,\ldots,z_\ell;
m+1+n+\ell;\alpha_0,\ldots,\alpha_{m-1},\alpha_m\beta_0,\beta_1,\ldots,\beta_{n-1},
\beta_n\gamma_0,\gamma_1,\ldots,\gamma_\ell;\Big(T^{(0)},\ldots,T^{(m-1)},
T^{(m)}\succ_{\alpha_m}U^{(0)}+T^{(m)}
\prec_{\beta_0}U^{(0)}
+T^{(m)}\cdot U^{(0)},\\
& \ U^{(1)},\ldots,U^{(n-1)},
U^{(n)}\succ_{\beta_n}W^{(0)}+U^{(n)}\prec_{\gamma_0}W^{(0)}
+U^{(n)}\cdot W^{(0)}, W^{(1)},\ldots,W^{(l)}\Big)\\
=& \ \bigv x_1,\ldots,x_m;m+1;\alpha_0,\ldots,\alpha_m;(T^{(0)},\ldots,T^{(m)})\cdot\bigv y_1,\ldots,y_{n-1},y_n,z_1,\ldots,z_\ell;n+1;\beta_0,\ldots,
\beta_{n-1},\beta_n\gamma_0,\gamma_1,\ldots,\gamma_\ell;\Big(U^{(0)},\ldots,U^{(n-1)},
U^{(n)}\succ_{\beta_n}W^{(0)}\\
& \ +U^{(n)}\prec_{\gamma_0}W^{(0)}+U^{(n)}\cdot W^{(0)},W^{(1)},\ldots,W^{(\ell)}\Big)\\
=& \ T\cdot(U\cdot W).
\end{align*}}}

\end{document}